\setlist[enumerate]{leftmargin=.5in}
\setlist[itemize]{leftmargin=.5in}
\newtheorem{thm}{Theorem}
\newtheorem{defin}[thm]{Definition}     
\newtheorem{lem}[thm]{Lemma}
\newtheorem{cor}[thm]{Corollary}
\newtheorem{ex}[thm]{Example}
\newtheorem{rem}[thm]{Remark}
\newcommand{\eps}{\varepsilon}
\newcommand{\E}{\mathbb{E}}
\renewcommand{\P}{\mathbb{P}}
\renewcommand{\H}{\mathbb{H}}
\newcommand{\R}{\mathbb{R}}
\newcommand{\W}{\mathbb{W}}
\newcommand{\N}{\mathbb{N}}
\crefname{hypothesis}{Hypothesis}{Hypotheses}
\title{Convergence rates for Penalised Least Squares estimators in PDE-constrained regression problems\thanks{Submitted to the editors 30/12/2018.
\funding{RN and SW were supported by ERC grant No. 647812, SW by EPSRC grant EP/L016516/1 and CCIMI, and SvdG by EPSRC Grant LNAG/036 RG91310 during her visit to the Isaac Newton Institute, Cambridge (Jan-Jun 2018), when this research was initiated.}}}
\author{Richard Nickl\thanks{Department of Pure Mathematics \& Mathematical Statistics, Univ. of Cambridge
  (\email{r.nickl@statslab.cam.ac.uk}).}
\and Sara van de Geer\thanks{Seminar f\"ur Statistik, ETH Z\"urich
  (\email{sara.vandegeer@stat.math.ethz.ch}).}
\and Sven Wang\thanks{Department of Pure Mathematics \& Mathematical Statistics, Univ. of Cambridge (\email{ssw29@cam.ac.uk}).}}
\begin{document}

\maketitle

\begin{abstract}
We consider PDE constrained nonparametric regression problems in which the parameter $f$ is the unknown coefficient function of a second order elliptic partial differential operator $L_f$, and the unique solution $u_f$ of  the boundary value problem
$$L_fu=g_1\textnormal{ on }\mathcal O, \quad u=g_2\textnormal{ on }\partial \mathcal O,$$
is observed corrupted by additive Gaussian white noise. Here $\mathcal O$ is a bounded domain in $\mathbb R^d$ with smooth boundary $\partial \mathcal O$, and $g_1, g_2$ are given functions defined on $\mathcal O, \partial \mathcal O$, respectively. Concrete examples include $L_fu=\Delta u-2fu$ (Schr\"odinger equation with attenuation potential $f$) and $L_fu=\text{div} (f\nabla u)$ (divergence form equation with conductivity $f$). In both cases, the parameter space
\[\mathcal F=\{f\in H^\alpha(\mathcal O)| f > 0\}, ~\alpha>0, \]
where $H^\alpha(\mathcal O)$ is the usual order $\alpha$ Sobolev space, induces a set of non-linearly constrained regression functions $\{u_f: f \in \mathcal F\}$.
We study Tikhonov-type penalised least squares estimators $\hat f$ for $f$. The penalty functionals are of squared Sobolev-norm type and thus $\hat f$ can also be interpreted as a Bayesian `MAP'-estimator corresponding to some Gaussian process prior. We derive rates of convergence of $\hat f$ and of $u_{\hat f}$, to $f, u_f$, respectively. We prove that the rates obtained are minimax-optimal in prediction loss.  Our bounds are derived from a general convergence rate result for non-linear inverse problems whose forward map satisfies a modulus of continuity condition, a result of independent interest that is applicable also to linear inverse problems, illustrated in an example with the Radon transform.
\end{abstract}

\begin{keywords}
  non-linear inverse problems, Bayesian non-parametrics, statistical inference for partial differential equations
\end{keywords}

\begin{AMS}
	62G20, 65N21, 35J10
\end{AMS}

	\section{Introduction}
	Observations obeying certain physical laws can often be described by a partial differential equation (PDE). Real world measurements carry statistical noise and thus do not generally exactly exhibit the idealised pattern of the PDE, but it is desirable that recovery of  parameters from data is consistent with the PDE structure. In the mathematical literature on inverse problems several algorithms that incorporate such constraints have been proposed, notably optimisation based methods such as Tikhonov regularisation \cite{EHN96, BB18} and maximum a posteriori (MAP) estimates related to Bayesian inversion techniques \cite{stuart10, DS16}. In statistical terminology these methods can be viewed as penalised least squares estimators over parameter spaces of regression functions that are restricted to lie in the range of some `forward operator' $\mathscr G$ describing the solution map of the PDE. The case where $\mathscr G$ is linear is reasonably well studied in the inverse problems literature, but already in basic elliptic PDE examples, the map $\mathscr G$ is \textit{non-linear} and the analysis is more involved. The observation scheme considered here will be a natural continuous analogue of the standard Gaussian regression model
	\begin{equation}\label{intro-obs}
	Y_i=u_f(x_i)+\varepsilon_i, ~i=1, \dots, n; ~\{\varepsilon_i\} \sim^{i.i.d.}N(0,1),
	\end{equation}
	where $(x_i)_{i=1}^n$ are `equally spaced' design points on a bounded domain $\mathcal O \subset \mathbb R^d$ with smooth boundary $\partial \mathcal O$. The function $u_f: \mathcal O \to \mathbb R$ is, in our first example, the solution $u=u_f$ of the elliptic PDE (with $\nabla$ denoting the gradient and $\nabla \cdot$ the divergence operator)
	\begin{equation}\label{intro-div}
	\begin{cases}
	\nabla\cdot (f\nabla u) =g \quad \textnormal{on }  \mathcal O,\\
	u=0\quad \textnormal{on }  \mathcal \partial O,
	\end{cases}
	\end{equation}
	where $g>0$ is a given source function defined on $\mathcal O$ and $f: \mathcal O \to (0,\infty)$ is an unknown \textit{conductivity  (or diffusion) coefficient}. The second model example arises with solutions $u=u_f$ of the time-independent Schr\"odinger equation  (with $\Delta$ equal to the standard Laplacian operator)
	\begin{equation}\label{intro-sch}
	\begin{cases}
	\Delta u -2fu =0 \quad \textnormal{on }  \mathcal O,\\
	u=g\quad \textnormal{on } \mathcal \partial O,
	\end{cases}
	\end{equation}
	corresponding to the unknown \textit{attenuation potential (or reaction coefficient)}  $f: \mathcal O \to (0,\infty)$, and given positive `boundary temperatures' $g>0$. Both PDEs have a fundamental physical interpretation and feature in many application areas, see, e.g., \cite{EHN96, BHM04, HP08, BU10, stuart10, devore, DS16}, and references therein. 
	
	When $f>0$ belongs to some Sobolev space $H^\alpha(\mathcal O)$ for appropriate $\alpha>0$, unique solutions $u_f$ of the PDEs (\ref{intro-div}), (\ref{intro-sch}) exist, and the `forward' map $f \mapsto u_f$ is non-linear. [In fact, in (\ref{intro-sch}) only $f \ge 0$ is required.] A natural method to estimate $f$ is by a penalised least squares approach: one minimises over $f \in H^\alpha(\mathcal O)$ with $f>0$ the squared Euclidean distance $$Q_n(f)=\|Y-u_f\|^2$$ of the observation vector $(Y_i:i=1, \dots, n)$ to the fitted values $(u_f(x_i):i=1, \dots, n)$, and penalises too complex solutions $f$ by, for instance, an additive Sobolev norm $\|\cdot\|_{H^\alpha}$ - type penalty. The (from a PDE perspective) natural constraint $f>0$ can be incorporated by a smooth one-to-one transformation $\Phi$ of the penalty function, and a final estimator $\hat f$ minimises a criterion function of the form $$Q_n(f) + \lambda^2 \|\Phi^{-1}[f]\|_{H^\alpha}^2,$$ over $f \in H^\alpha(\mathcal O)$ with $f>0$, where $\lambda$ is a scalar regularisation parameter to be chosen. Both Tikhonov regularisers as well as Bayesian maximum a posteriori (MAP) estimates arising from suitable Gaussian priors fall into this class of estimators. We show in the present paper that suitable choices of $\lambda, \alpha, \Phi$ give rise to statistically optimal solutions of the above PDE constrained regression problems from data (\ref{intro-obs}), in prediction loss. The convergence rates obtained can be combined with `stability estimates' to obtain bounds also for the recovery of the parameter $f$ itself. 
	
	Our main results are based on a general convergence rate theorem for minimisers over $H^\alpha$ of  functionals of the form $$F \mapsto \|Y-\mathscr G(F)\|^2 + \lambda^2 \|F\|_{H^\alpha}^2$$ in possibly non-linear inverse problems whose forward map $F \mapsto \mathscr G(F)$ satisfies a certain modulus of continuity assumption between Hilbert spaces. This result, which adapts $M$-estimation techniques \cite{sara2001, saramest} to the inverse problems setting, is of independent interest, and provides novel results also for linear forward maps, see Remark \ref{radon} for an application to Radon transforms.
	
	For sake of conciseness, our theory is given in the Gaussian white noise model introduced in (\ref{data}) below -- it serves as an asymptotically equivalent (see \cite{BL96, R08}) continuous analogue of the discrete model (\ref{intro-obs}), and facilitates the application of PDE techniques in our proofs. Transferring our results to discrete regression models is possible, but the additional difficulties are mostly of a technical nature and will note be pursued here.
	
	Recovery for non-linear inverse problems such as those mentioned above has been studied initially in the deterministic regularisation literature \cite{EKN89, N92, SEK93, EHN96, TJ02}, and the convergence rate theory developed there has been adapted to the statistical regression model (\ref{intro-obs}) in \cite{BHM04, BHMR07, HP08, LL10}. These results all assume that a suitable Fr\'echet derivative $D\mathscr G$ of the non-linear forward map $\mathscr G$ exists at the `true' parameter $F$, and moreover require that $F$ lies in the range of the adjoint operator of $D\mathscr G$ -- the so called `source condition'. Particularly for the PDE (\ref{intro-div}), such conditions are problematic and do not hold in general for rich enough classes of $F$'s (such as Sobolev balls) unless one makes very stringent additional model assumptions. Our results circumvent such source conditions. Further remarks, including a discussion of related convergence analysis of estimators obtained from Bayesian inversion techniques \cite{v13, dashti13, n17} can be found in Section \ref{sec-pde-rem}.
	
	The article is organised as follows. The main results are stated in Sections \ref{sec-general} and \ref{sec-pderes}; their proofs are contained in Sections \ref{sec-pfs} and \ref{sec-ex-pf}. Some key auxiliary results about the elliptic PDE (\ref{intro-div})-(\ref{intro-sch}) and the `link functions' $\Phi$ used below are proved in Section \ref{sec-pde-facts} and \ref{sec-reg} respectively.

	\subsection{Some preliminaries and basic notation}\label{sec-gen-pre}
	Throughout, $\mathcal O\subseteq \mathbb R^d$, $d\geq 1$, denotes a bounded non-empty $C^\infty$-domain (an open bounded set with smooth boundary) with closure $\bar{\mathcal O}$. The usual space $L^2(\mathcal O)$ of square integrable functions carries a norm $\|\cdot\|_{L^2(\mathcal O)}$ induced by the inner product $$\langle h_1, h_2 \rangle_{L^2(\mathcal O)} = \int_\mathcal O h_1(x)h_2(x)dx,~~h_1,h_2 \in L^2(\mathcal O),$$ where $dx$ denotes Lebesgue measure. For any multi-index $i=(i_1,...,i_d)$ of `order' $|i|$, let $D^i$ denote the $i$-th (weak) partial derivative operator of order $|i|$. Then for integer $\alpha\geq 0$, the usual Sobolev spaces are defined as
	\[H^\alpha(\mathcal O):=\left\{f\in L^2(\mathcal O)\; \middle| \;\textnormal{for all }|i|\leq \alpha, \; D^if \textnormal{ exists and } D^if\in L^2(\mathcal O) \right\},\]
	normed by $\|f\|_{H^\alpha(\mathcal O)} = \sum_{|i| \le \alpha} \|D^if\|_{L^2(\mathcal O)}$. For non-integer real values $\alpha\geq 0$, we define $H^\alpha(\mathcal O)$ by interpolation, see, e.g., \cite{lionsmagenes} or \cite{T78}. 
	
	\par
	
	The spaces of bounded and continuous functions on $\mathcal O$ and $\bar{\mathcal O}$ are denoted by $C(\mathcal O)$ and $C(\bar{\mathcal O})$, respectively, equipped with the supremum norm $\|\cdot\|_\infty$. For $\eta\in\N$, the space of $\eta$-times  differentiable functions on $\mathcal O$ with (bounded) uniformly continuous derivatives is denoted by $C^\eta(\mathcal O)$. For $\eta > 0,\eta\notin \N$, we say $f \in C^\eta(\mathcal O)$ if for all multi-indices $\beta$ with $|\beta|\leq \lfloor\eta\rfloor $ (the integer part of $\eta$), $D^\beta f$ exists and is $\eta-\lfloor\eta\rfloor$-H\"older continuous. The norm on $C^\eta(\mathcal O)$ is 
	\[\|f\|_{C^\eta(\mathcal O)}=\sum_{\beta:|\beta|\leq\lfloor\eta\rfloor}\|D^\beta f\|_{\infty}+\sum_{\beta:|\beta|=\lfloor\eta\rfloor}\sup_{x,y\in\mathcal O, \;x\neq y}\frac{|D^\beta f(x)-D^\beta f(y)|}{|x-y|^{\eta-\lfloor \eta\rfloor}}. \] We also define the set of smooth functions as $C^\infty(\mathcal O)=\cap_{\eta>0} C^\eta(\mathcal O)$ and its subspace $C^\infty_c(\mathcal O)$ of functions compactly supported in $\mathcal O$. 
	
	The previous definitions will be used also for $\mathcal O$ replaced by $\partial \mathcal O$ or $\mathbb R^d$. When there is no ambiguity, we omit $\mathcal O$ from the notation.
	\par 
	For any normed linear space $(X,\|\cdot\|_X)$ its topological dual space is 
	\[X^*:=\left\{L:X\to \mathbb R \textnormal{ linear s.t. } \exists C>0 \;\forall x\in X: \;|L(x)|\leq C\|x\|_X \right\},\]
	which is a Banach space for the norm $\|L\|_{X^*}=\sup_{x\in X}|L(x)|/ \|x\|_{X}.$ 
	
	We need further Sobolev-type spaces to address routine subtleties of the behaviour of functions near $\partial \mathcal O$: denote by $H^\alpha_c(\mathcal O)$ the completion of $C^\infty_c(\mathcal O)$ for the $H^\alpha(\mathcal O)$-norm, and let $\tilde H^\alpha(\mathcal O)$ denote the closed subspace of $H^\alpha(\mathbb R^d)$ consisting of functions supported in $\bar {\mathcal O}$. We have $H^\alpha_c(\mathcal O)=\tilde H^\alpha(\mathcal O)$ unless $\alpha=k+1/2, k \in \mathbb N$ (Section 4.3.2 in \cite{T78}), and one defines negative order Sobolev spaces $H^{-\kappa}(\mathcal O)=(\tilde H^\kappa(\mathcal O))^*, \kappa>0$, cf.~also Theorem 3.30 in \cite{ML00}.
	
	We use the symbols ``$\lesssim, \gtrsim$'' for inequalities that hold up to multiplicative constants that are universal, or whose dependence on other constants will be clear from the context. We also use the standard notation $\R_+:= \{x|x\geq 0\}$ and $a\vee b:=\max\{a,b\}$ for $a,b\in \R$.

	\section{A convergence rate result for general inverse problems}\label{sec-general}

	\subsection{Forward map and white noise model}
	
	Let $\mathbb H$ be a separable Hilbert space with inner product $\langle \cdot, \cdot \rangle_\mathbb H$. Suppose that $\tilde{\mathcal V}\subseteq L^2(\mathcal O)$ and that 
	\begin{equation*}
	\mathscr G:\tilde{\mathcal V}\to \mathbb H, \qquad F\mapsto \mathscr G(F),
	\end{equation*}
	is a given `forward' map.  For some $F\in \tilde {\mathcal  V}$, and for scalar `noise level' $\varepsilon>0$, we observe a realisation of the equation
	\begin{equation}\label{data}
	Y^{(\varepsilon)}=\mathscr G(F)+\varepsilon \mathbb W,
	\end{equation}
	where $(\mathbb W(\psi):\psi\in \mathbb H)$ is a centred Gaussian white noise process indexed by the Hilbert space $\mathbb H$ (see p.19-20 in \cite{nicklgine}). Let $\mathbb E_F^{\varepsilon}, F \in \tilde{\mathcal V},$ denote the expectation operator under the law $\mathbb P_F^\varepsilon$ of $Y^{(\varepsilon)}$ from (\ref{data}). Observing (\ref{data}) means to observe a realisation of the Gaussian process $(\langle Y^{(\eps)},\psi\rangle_{\mathbb H}: \psi \in \mathbb H)$ with marginal distributions $$\langle Y^{(\eps)},\psi\rangle_{\mathbb H} \sim N(\langle \mathscr G(F), \psi \rangle_{\mathbb H}, \eps^2\|\psi\|_{\mathbb H}^2).$$
	In the case $\mathbb H=L^2(\mathcal O)$ relevant in Section \ref{sec-pderes} below, (\ref{data}) can be interpreted as a Gaussian shift experiment in the Sobolev space $H^{-\kappa}(\mathcal O), \kappa>d/2$ (see, e.g., \cite{cn1, n17}), and also serves as a theoretically convenient (and, for $\varepsilon = 1/\sqrt n$, as $n \to \infty$ asymptotically equivalent) continuous surrogate model for observing $(Y_i, x_i)_{i=1}^n$ in the standard fixed design Gaussian regression model 
	\begin{equation}\label{disdat}
	Y_i = \mathscr G(F)(x_i) + \varepsilon_i, ~i=1, \dots, n, ~\{\varepsilon_i\} \sim^{i.i.d.} N(0,1),
	\end{equation}
	where the $x_i$ are `equally spaced' design points in the domain $\mathcal O$ (see \cite{BL96, R08}).
	
	In the discrete model (\ref{disdat}) the least squares criterion can be decomposed as 
	$\|Y-\mathscr G(F)\|_{\mathbb R^n}^2$ $=\|Y\|_{\mathbb R^n}^2 - 2\langle Y, \mathscr G(F) \rangle_{\mathbb R^n} + \|\mathscr G(F)\|_{\mathbb R^n}^2$. The first term $\|Y\|_{\mathbb R^n}^2$ is independent of $F$ and can be neglected when optimising in $F$. In the continuous model (\ref{data}) we have $\|Y\|_{\mathbb H}=\infty$ a.s. (unless dim$(\mathbb H) <\infty$), which motivates to define a `Tikhonov-regularised' functional
	
	\begin{equation}\label{Jdef2}
	\mathscr J_{\lambda, \varepsilon}:\tilde{\mathcal V}\to \mathbb R,\quad  \mathscr J_{\lambda, \varepsilon}(F):=2\langle Y^{(\varepsilon)},\mathscr G(F)\rangle_{\mathbb H}-\|\mathscr G(F)\|_{\mathbb H}^2-\lambda^2\|F\|_{H^\alpha}^2,
	\end{equation}
	where $\lambda>0$ is a regularisation parameter to be chosen, and where we set $\mathscr J_{\lambda, \varepsilon}(F)=-\infty$ for $F\notin H^\alpha$. Maximising $\mathscr J_{\lambda, \varepsilon}$ thus amounts to minimising the natural least squares fit with a $H^\alpha(\mathcal O)$-penalty for $F$, and we note that it also corresponds to maximising the penalised log-likehood function arising from (\ref{data}), see, e.g., \cite{n17}, Section 7.4.  In all that follows $\|\cdot\|_{H^\alpha}$ could be replaced by any equivalent norm on $H^\alpha(\mathcal O)$.
	
	We note that when $\mathscr G$ is non-linear, computation of a global maximiser of the (then non-convex) functional $\mathscr J_{\lambda, \varepsilon}$ may be infeasible in practice. Nevertheless, the convergence rates we obtain below provide a first rigorous understanding of the statistical complexity of the PDE inference problems at hand. It is an interesting open question whether algorithms that are computable in `polynomial time' can attain the same performance guarantees. This is subject of ongoing research (see, e.g., \cite{MNP19b}) and beyond the scope of the present paper.

	\subsection{Results}\label{sec-gen-res}
	
	For $F_1\in \tilde{\mathcal V} \cap H^\alpha$, $F_2\in\tilde{\mathcal V}$ and $\lambda >0$, define the functional
	\begin{equation}\label{taudef}
	\tau_\lambda^2(F_1,F_2):=\|\mathscr G(F_1)-\mathscr G(F_2)\|_{\mathbb H}^2+\lambda^2\|F_1\|_{H^\alpha}^2.
	\end{equation}
	The main result of this section, Theorem \ref{thm-gen}, proves the existence of maximisers $\hat F$ for $\mathscr J_{\lambda, \varepsilon}$ over suitable subsets $\mathcal V\subseteq \tilde{\mathcal V} \cap H^\alpha$ and concentration properties for $\tau_\lambda(\hat F, F_0)$, where $F_0$ is the `true' function generating the law $\mathbb P_{F_0}^\varepsilon$ from equation (\ref{data}). Note that bounds for $\tau_\lambda(\hat F, F_0)$ simultaneously control the `prediction error' $\|\mathscr G(\hat F)-\mathscr G(F_0)\|_{\mathbb H}$ as well as the regularity $\|\hat F\|_{H^\alpha}$ of the estimated output $\hat F$.  
	
	\smallskip
	
	Theorem \ref{thm-gen} is proved under a general `modulus of continuity' condition on the map $\mathscr G$ which reads as follows. 
	\begin{defin}
		Let $\alpha,\gamma, \kappa\in\R_{+}$ be non-negative real numbers and $\tilde {\mathcal V}\subseteq L^2(\mathcal O)$. Set $\mathcal H:=H^\alpha(\mathcal O)$ if $\kappa<1/2$, and $\mathcal H:=H^\alpha_c(\mathcal O)$ if $\kappa\geq 1/2$. A map $\mathscr G:\tilde{\mathcal V}\to \mathbb H$ is called $(\kappa,\gamma, \alpha)$\emph{-regular} if there exists a constant $C>0$ such that for all $F,H\in \tilde{\mathcal V}\cap \mathcal H$, we have
		\begin{equation}\label{entrcond}
		\|\mathscr G(F)-\mathscr G(H)\|_{\mathbb H}\leq C\big(1+\|F\|_{H^\alpha(\mathcal O)}^\gamma\vee \|H\|_{H^\alpha(\mathcal O)}^\gamma\big)\|F-H\|_{(H^{\kappa}(\mathcal O))^*},
		\end{equation}
	\end{defin}
	
	This condition is easily checked for `$\kappa$-smoothing' \textit{linear} maps $\mathscr G$ with $\gamma=0$, see Remark \ref{radon} for an example. But (\ref{entrcond}) also allows for certain non-linearities of $\mathscr G$ on unbounded parameter spaces $\tilde {\mathcal V}$ that will be seen later on to accommodate the forward maps induced by the PDEs (\ref{intro-div}), (\ref{intro-sch}). See also Remarks \ref{nonlin}, \ref{maphack} below.

	\begin{thm}\label{thm-gen}
		Suppose that $\mathscr G: \tilde{\mathcal V}\to \mathbb H$ is a $(\kappa,\gamma, \alpha)$-regular map for some integer $\alpha>(d/2-\kappa) \vee (\gamma d/2-\kappa)$. Let $Y^{(\eps)} \sim \mathbb P_{F_0}^\varepsilon$ from (\ref{data}) for some fixed $F_0 \in \tilde{\mathcal V}$. 
		Then the following holds.
		\par 
		1. Let $\mathcal V\subseteq \tilde{\mathcal V}\cap \mathcal H$ be closed for the weak topology of the Hilbert space $\mathcal H$. Then for all $\lambda, \varepsilon>0$, almost surely under $\mathbb P_{F_0}^\varepsilon$, there exists a maximiser $\hat F=\hat F_{\lambda,\varepsilon} \in \mathcal V$ of $\mathscr J_{\lambda,\eps}$ from (\ref{Jdef2}) over $\mathcal V$, satisfying
		\begin{equation}\label{hatF}
		\sup_{F\in \mathcal V}\mathscr J_{\lambda,\varepsilon}(F) = \mathscr J_{\lambda, \varepsilon} (\hat F).
		\end{equation}
		\par 
		2. Let $\mathcal V\subseteq \tilde{\mathcal V}\cap \mathcal H$. There exist constants $c_1,c_2,c_3>0$ such that for all
		$\varepsilon,\lambda,\delta>0$ satisfying 
		\begin{equation}\label{delta-cond}
		\varepsilon^{-1}\delta \geq c_1\big(1+\lambda^{-\frac{1}{2s}}\big(1+(\delta/\lambda)^{\frac{\gamma}{2s}}\big) \big),~s:=(\alpha+\kappa)/d,
		\end{equation}
		all $R\geq \delta$, any maximiser $\hat F =\hat F_{\lambda, \varepsilon} \in \mathcal V$ of $\mathscr J_{\lambda,\eps}$ over $\mathcal V$ and any $F_* \in \mathcal V$, we have
		\begin{equation}\label{main-thm-tau}
		\mathbb P_{F_0}^\varepsilon\big(\tau_\lambda^2(\hat F,F_0)\geq 2(\tau_\lambda^2(F_*, F_0)+R^2)\big)\leq c_2\exp\Big(-\frac{R^2}{c_2^2\varepsilon^2}\Big),
		\end{equation}
		and also
		\begin{equation}\label{main-thm-rate}
		\E^\varepsilon_{F_0}\left[\tau_\lambda^2(\hat F,F_0)\right]\leq c_3 \left(\tau_\lambda^2(F_*, F_0)+\delta^2+ \varepsilon^2\right).
		\end{equation}
	\end{thm}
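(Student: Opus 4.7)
The plan is to treat the two parts separately, using weak compactness and Sobolev entropy for Part 1, and a peeling argument in the style of van de Geer's M-estimation theory for Part 2.

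For Part 1, I would apply the direct method of the calculus of variations. Take a maximising sequence $\{F_k\}\subset\mathcal V$; the $-\lambda^2\|F\|_{H^\alpha}^2$ term in $\mathscr J_{\lambda,\varepsilon}$ forces $\sup_k\|F_k\|_{H^\alpha}<\infty$, so by weak sequential compactness of balls in $\mathcal H$ and the assumed weak closedness of $\mathcal V$, a subsequence converges weakly to some $\hat F\in\mathcal V$. The embedding $\mathcal H\hookrightarrow(H^\kappa(\mathcal O))^*$ is compact (thanks to $\alpha>d/2-\kappa$, with $\mathcal H=H^\alpha_c$ used precisely to handle boundary behaviour when $\kappa\geq 1/2$), so weak convergence upgrades to strong convergence in $(H^\kappa)^*$. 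Combined with (\ref{entrcond}) and the uniform $H^\alpha$-bound, this yields $\mathscr G(F_k)\to\mathscr G(\hat F)$ strongly in $\mathbb H$. Hence $\{\mathscr G(F_k)\}$ lies in a fixed $\mathbb H$-compact set; on this set the isonormal process $\mathbb W$ is almost surely uniformly continuous, because (\ref{entrcond}) together with the classical Sobolev entropy estimate $\log N(\eta, B_K(H^\alpha),\|\cdot\|_{(H^\kappa)^*})\lesssim (K/\eta)^{1/s}$ produces a finite Dudley integral as soon as $s=(\alpha+\kappa)/d>1/2$. Consequently both $\langle Y^{(\eps)},\mathscr G(F_k)\rangle_\mathbb H$ and $\|\mathscr G(F_k)\|_\mathbb H^2$ converge along the subsequence, while the norm $\|\cdot\|_{H^\alpha}$ is weakly lower semicontinuous, giving $\limsup_k\mathscr J_{\lambda,\varepsilon}(F_k)\leq \mathscr J_{\lambda,\varepsilon}(\hat F)$.

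For Part 2, substituting $Y^{(\eps)}=\mathscr G(F_0)+\varepsilon\mathbb W$ into the optimality inequality $\mathscr J_{\lambda,\varepsilon}(\hat F)\geq \mathscr J_{\lambda,\varepsilon}(F_*)$ and rearranging yields the basic inequality
\[
\tau_\lambda^2(\hat F,F_0)\ \leq\ \tau_\lambda^2(F_*,F_0)\ +\ 2\varepsilon\,\mathbb W\bigl(\mathscr G(\hat F)-\mathscr G(F_*)\bigr).
\]
Controlling the random term requires a uniform-in-$F$ bound, which I would obtain by peeling. On the event $\{\tau_\lambda^2(\hat F,F_0)\geq 2(\tau_\lambda^2(F_*,F_0)+R^2)\}$, decompose into shells $S_j:=\{F\in\mathcal V:2^jR\leq\tau_\lambda(F,F_0)<2^{j+1}R\}$, $j\geq 0$. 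On $S_j$, $\|F\|_{H^\alpha}\leq 2^{j+1}R/\lambda$ and $\|\mathscr G(F)-\mathscr G(F_*)\|_\mathbb H\leq 2^{j+1}R+\tau_\lambda(F_*,F_0)$; the modulus of continuity (\ref{entrcond}) then transports the Sobolev entropy bound into $\mathbb H$,
\[
\log N\bigl(\eta,\{\mathscr G(F)-\mathscr G(F_*):F\in S_j\},\|\cdot\|_\mathbb H\bigr)\ \lesssim\ \bigl((1+(2^jR/\lambda)^\gamma)\,2^jR/(\lambda\eta)\bigr)^{1/s}.
\]

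Inserting this entropy bound into Dudley's integral and invoking Borell-TIS concentration with variance proxy $\asymp 2^jR$ yields a tail estimate
\[
\mathbb P_{F_0}^\varepsilon\!\Bigl(\sup_{F\in S_j}2\varepsilon\,\mathbb W(\mathscr G(F)-\mathscr G(F_*))\geq \tfrac12(2^jR)^2\Bigr)\ \leq\ c_2\exp\!\bigl(-c\,4^jR^2/\varepsilon^2\bigr),
\]
valid whenever the Dudley integral on shell $j$ is absorbed by $(2^jR)^2$. A direct evaluation shows that the last condition reduces, uniformly in $j\geq 0$, to $\varepsilon^{-1}(2^jR)\gtrsim 1+\lambda^{-1/(2s)}\bigl(1+(2^jR/\lambda)^{\gamma/(2s)}\bigr)$, which is exactly (\ref{delta-cond}) after substituting $R\geq\delta$ and noting that the map $R\mapsto R-$RHS is increasing. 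Summing the resulting geometric series over $j$ gives (\ref{main-thm-tau}); (\ref{main-thm-rate}) follows by integrating the sub-Gaussian tail via $\mathbb E[X]=\int_0^\infty\mathbb P(X\geq t)\,dt$ with the choice $R=\delta\vee\varepsilon$.

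I expect the main technical obstacle to be the shell-wise entropy/Dudley balance in the presence of the nonlinear factor $(1+\|F\|_{H^\alpha}^\gamma)$ in (\ref{entrcond}): in the linear-map case $\gamma=0$ the entropy of each shell has a clean dependence on the scale $2^jR$ alone, but for $\gamma>0$ the entropy of shell $S_j$ depends on $2^jR$ through two coupled factors, so the peeling must be engineered carefully to preserve sub-Gaussian tail behaviour, and this is what produces the extra term $(\delta/\lambda)^{\gamma/(2s)}$ appearing in (\ref{delta-cond}). A related subtlety is that Borell's inequality must be applied with the per-shell variance proxy rather than with a global one, so as to retain the sharp exponent $R^2/\varepsilon^2$ in (\ref{main-thm-tau}).
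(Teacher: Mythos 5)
Your Part 2 is essentially the argument in the paper: the basic inequality from $\mathscr J_{\lambda,\eps}(\hat F)\geq\mathscr J_{\lambda,\eps}(F_*)$, a peeling decomposition, the entropy/Dudley bound derived from (\ref{entrcond}) and the Sobolev ball entropy, and Borell--TIS with per-shell variance proxy. The paper peels in $\tau_\lambda(\hat F,F_*)$ rather than $\tau_\lambda(\hat F,F_0)$ (after a small algebraic manipulation giving $\tau_\lambda^2(\hat F,F_0)-\tau_\lambda^2(F_*,F_0)\geq\frac16\tau_\lambda^2(\hat F,F_*)$), which avoids the bookkeeping you would otherwise need to check that $\tau_\lambda^2(F_*,F_0)$ does not spoil the threshold $\tfrac12(2^jR)^2$; but this is a technical variant of the same idea.

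Your Part 1, however, has a genuine gap at the very first step. The claim that ``the $-\lambda^2\|F\|_{H^\alpha}^2$ term forces $\sup_k\|F_k\|_{H^\alpha}<\infty$'' is a coercivity assertion, and it does not hold in the generality assumed here. Writing $\mathscr J_{\lambda,\eps}(F)=-\|\mathscr G(F)-\mathscr G(F_0)\|_{\mathbb H}^2+\|\mathscr G(F_0)\|_{\mathbb H}^2+2\eps\mathbb W(\mathscr G(F))-\lambda^2\|F\|_{H^\alpha}^2$, the only quantity you can use to dominate $\mathbb W(\mathscr G(F))$ on $\{\|F\|_{H^\alpha}\leq R\}$ is the Dudley/Borell bound, and (\ref{entrcond}) together with the Sobolev embedding $L^2\subset(H^\kappa)^*$ gives $\|\mathscr G(F)\|_{\mathbb H}\lesssim 1+\|F\|_{H^\alpha}^{\gamma+1}$. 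The resulting a.s.\ bound on $\sup_{\|F\|_{H^\alpha}\leq R}|\mathbb W(\mathscr G(F))|$ grows like $R^{\gamma+1}$, which is \emph{not} $o(R^2)$ once $\gamma\geq 1$ (and $\gamma=4$ in both PDE applications). So a maximising sequence need not be bounded, and the direct method cannot be launched. The paper circumvents this precisely by \emph{not} arguing coercivity: it first establishes the concentration inequality (\ref{main-thm-tau}), which (read carefully) bounds the probability that \emph{any} $F\in\mathcal V$ with $\mathscr J_{\lambda,\eps}(F)\geq\mathscr J_{\lambda,\eps}(F_*)$ has $\tau_\lambda(F,F_0)$ large; on the complement one has $\sup_{\mathcal V}\mathscr J_{\lambda,\eps}=\sup_{\mathcal V\cap\mathcal H(2^j)}\mathscr J_{\lambda,\eps}$ for $j$ large, and only then is the direct method applied on the bounded ball. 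The reason the concentration argument succeeds where coercivity fails is that it works over the sets $\mathcal D_*(\lambda,R)=\{\mathscr G(F):\tau_\lambda^2(F,F_*)\leq R^2\}$, whose $\mathbb H$-diameter is $O(R)$ rather than $O(R^{\gamma+1})$, because the constraint $\|\mathscr G(F)-\mathscr G(F_*)\|_{\mathbb H}\leq R$ is built into $\tau_\lambda$ and truncates the nonlinear growth. Your Part 1 as written needs this localisation step inserted before the compactness/lower-semicontinuity argument can take over.
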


	Various applications of Theorem \ref{thm-gen} for specific choices of $\kappa$, $\gamma$, $\mathcal V$ and $\tilde{\mathcal V}$ will be illustrated in the following - besides the main PDE applications from Section \ref{sec-pderes}, see Remarks \ref{lin}, \ref{maphack} and \ref{bdhack} as well as Example \ref{radon} below.
	\par
	Theorem \ref{thm-gen} does not necessarily require $F_0 \in \mathcal V$ as long as $F_0$ can be suitably approximated by some $F_* \in \mathcal V$, see Remark \ref{lin} for an instance of when this is relevant. If $F_0 \in \mathcal V$ then we can set $F_*=F_0$ in the above theorem and obtain the following convergence rates, which are well known to be optimal for $\kappa$-smoothing linear forward maps $\mathscr G$, and which will be seen to be optimal also for the non-linear inverse problems arising from the PDE models (\ref{intro-div}) and (\ref{intro-sch}).
	\begin{cor}\label{cor-gen}
		Under the conditions of Part 2 of Theorem \ref{thm-gen}, for all $R>0$ there exists $c<\infty$ such that for all $\varepsilon>0$ small enough, $\lambda = \varepsilon^{2(\alpha+\kappa)/(2(\alpha+\kappa)+d)}$ and any maximizer $\hat F_{\lambda,\eps}$ of $\mathscr J_{\lambda,\eps}$ over $\mathcal V$,
		\begin{equation}\label{rate-cor}
		\sup_{F_0\in\mathcal V:\|F_0\|_{H^\alpha}\leq R}\mathbb E^{\varepsilon}_{F_0}\left\|\mathscr G(\hat F_{\lambda,\varepsilon})-\mathscr G(F_0)\right\|_{\mathbb H}\leq c\varepsilon^{\frac{2(\alpha+\kappa)}{2(\alpha+\kappa)+d}}.
		\end{equation}
	\end{cor}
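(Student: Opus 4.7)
The plan is to apply Part 2 of Theorem \ref{thm-gen} with the choice $F_* = F_0$, which is permissible since $F_0 \in \mathcal V$ by assumption. With this choice one has $\tau_\lambda^2(F_*,F_0) = \|\mathscr G(F_0)-\mathscr G(F_0)\|_{\mathbb H}^2 + \lambda^2\|F_0\|_{H^\alpha}^2 \leq \lambda^2 R^2$. Moreover, by the definition (\ref{taudef}), the prediction error satisfies $\|\mathscr G(\hat F_{\lambda,\varepsilon})-\mathscr G(F_0)\|_{\mathbb H}^2 \leq \tau_\lambda^2(\hat F_{\lambda,\varepsilon},F_0)$, so inequality (\ref{main-thm-rate}) will yield
\[
\mathbb E^\varepsilon_{F_0}\|\mathscr G(\hat F_{\lambda,\varepsilon})-\mathscr G(F_0)\|_{\mathbb H}^2 \leq c_3\bigl(\lambda^2 R^2 + \delta^2 + \varepsilon^2\bigr),
\]
provided $\delta>0$ is chosen to satisfy the admissibility condition (\ref{delta-cond}). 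An application of Jensen's inequality then passes from this bound on the squared expectation to a bound on the expectation of the norm, at the cost of replacing $\lambda^2 R^2 + \delta^2 + \varepsilon^2$ by $\lambda R + \delta + \varepsilon$.

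The core of the argument is then to choose $\delta$ so that (\ref{delta-cond}) is satisfied while keeping $\delta$ of the same order as the other terms. With $s=(\alpha+\kappa)/d$ and the prescribed choice $\lambda = \varepsilon^{2s/(2s+1)}$, I would take $\delta = C\varepsilon\,\lambda^{-1/(2s)}$ for a sufficiently large constant $C=C(c_1,\gamma,s)$. A direct computation gives
\[
\delta \asymp \varepsilon\cdot\varepsilon^{-1/(2s+1)} = \varepsilon^{2s/(2s+1)}, \qquad \delta/\lambda \asymp 1,
\]
so $(\delta/\lambda)^{\gamma/(2s)} = O(1)$, and both $\lambda R$ and $\delta$ are of order $\varepsilon^{2s/(2s+1)}$, while $\varepsilon$ is of smaller order. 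The right-hand side of (\ref{delta-cond}) then reduces to a quantity of order $\lambda^{-1/(2s)} = \varepsilon^{-1/(2s+1)}$, which coincides (up to constants) with $\varepsilon^{-1}\delta$; hence taking $C$ large enough secures (\ref{delta-cond}) uniformly in $\varepsilon$ small, uniformly in $F_0$ with $\|F_0\|_{H^\alpha}\leq R$.

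Combining these observations, the bound
\[
\mathbb E^\varepsilon_{F_0}\|\mathscr G(\hat F_{\lambda,\varepsilon})-\mathscr G(F_0)\|_{\mathbb H} \;\leq\; \sqrt{c_3}\bigl(\lambda R + \delta + \varepsilon\bigr) \;\leq\; c\,\varepsilon^{2(\alpha+\kappa)/(2(\alpha+\kappa)+d)}
\]
holds for a constant $c$ depending only on $R, c_1, c_3, \alpha,\kappa, d, \gamma$, which is (\ref{rate-cor}). The only step that requires any genuine care is the verification of (\ref{delta-cond}) with the above choices; everything else is bookkeeping. I expect no substantive obstacle, since the Sobolev regularity hypothesis $\alpha > (d/2 - \kappa) \vee (\gamma d/2 - \kappa)$ inherited from Theorem \ref{thm-gen} is exactly what makes $s > 1/2$ and ensures that the exponents appearing in (\ref{delta-cond}) are benign for small $\varepsilon$.
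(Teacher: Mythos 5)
Your proposal is correct and follows essentially the same route as the paper: set $F_*=F_0$, pick $\delta\asymp\varepsilon^{2(\alpha+\kappa)/(2(\alpha+\kappa)+d)}$ (your $\delta=C\varepsilon\lambda^{-1/(2s)}$ is the same quantity), verify \eqref{delta-cond}, apply \eqref{main-thm-rate} and Jensen. The only point the paper leaves implicit that you spell out is that $\gamma/(2s)<1$ — guaranteed by $\alpha>\gamma d/2-\kappa$ — which is what lets the constant $C$ absorb the $C^{\gamma/(2s)}$ factor in \eqref{delta-cond}.
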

	
	When images of $\|\cdot\|_{H^\alpha}$-bounded subsets of $\mathcal V$ under a forward map $\mathscr G: L^2(\mathcal O) \to L^2(\mathcal O)$ are bounded in $H^{\beta}(\mathcal O)$ for some $\beta>0$, then the $L^2$-bound (\ref{rate-cor}) extends (via interpolation and bounds for $\|\hat F\|_{H^\alpha}$ implied by Theorem \ref{thm-gen}) to $H^\eta$-norms, $\eta\in [0,\beta]$, which in turn can be used to obtain convergence rates also for $\hat F -F_0$ by using stability estimates. See the results in Section \ref{sec-pderes} and also Example \ref{radon} below for examples.

	\begin{rem}[MAP estimates] \label{lin} \normalfont Let $\Pi$ be a Gaussian process prior measure for $F$ with reproducing kernel Hilbert space (RKHS) $\mathcal H$ and RKHS-norm $\bar \lambda \|\cdot\|_{H^\alpha}, \bar \lambda>0$. Taking note of the form of the likelihood function in the model (\ref{data}) (see, e.g., Section 7.4 in \cite{n17}), maximisers $\hat F$ of $\mathscr J_{\lambda,\eps}$ over $\mathcal V=\mathcal H$ with $\lambda=\varepsilon \bar \lambda$ have a formal interpretation as maximum a posteriori (MAP) estimators for the resulting posterior distributions $\Pi(\cdot|Y^{(\varepsilon)})$, see also \cite{dashti13, HB15}. For instance, let $\alpha>d/2, \kappa\geq 0,$ and consider a \textit{linear} inverse problem where for $\beta=\alpha-d/2$ and $\tilde {\mathcal V}=H^\beta(\mathcal O)$, $\mathscr G:H^\beta(\mathcal O) \to \H$ is a linear map satisfying (\ref{entrcond}) with $\gamma=0$ for all $F,H\in H^\beta(\mathcal O)$. Then, applying Theorem \ref{thm-gen} with $\lambda= \varepsilon$ (so that $\bar \lambda =1$) and $\delta \approx \varepsilon^{(2\beta+2\kappa)/(2\beta+2\kappa+d)}$ yields
		\begin{equation}\label{maprate}
		\sup_{F_0 \in \tilde H^\beta(\mathcal O_0): \|F_0\|_{\tilde H^\beta}\le R}\mathbb E^{\varepsilon}_{F_0}\left\|\mathscr G(\hat F)-\mathscr G(F_0)\right\|_{\mathbb H}\lesssim \delta,~~R>0,
		\end{equation}
		for any fixed sub-domain $\mathcal O_0$ such that $\bar{\mathcal O_0} \subsetneq \mathcal O$. Indeed, one easily checks (\ref{delta-cond}), and given $F_0 \in \tilde H^\beta(\mathcal O_0)$ set $F_*= \zeta \mathscr F^{-1}[ (1_{[|\cdot|\le (\delta/\lambda)^{2/d}]} \mathscr F [F_0]] \in H^\alpha_c(\mathcal O)$, where $\zeta \in C^\infty_c(\mathcal O)$ is such that $\zeta=1$ on $\mathcal O_0$ and $\mathscr F$ is the Fourier transform. Then $\|F_*\|_{H^\alpha(\mathcal O)} \lesssim \delta/\lambda$ and $\|F^*-F_0\|_{(H^\kappa(\mathcal O))^*} \lesssim \|F^*-F_0\|_{H^{-\kappa}(\mathcal O)} \lesssim \delta$ in (\ref{main-thm-rate}) yield (\ref{maprate}). Similar comments apply to non-linear $\mathscr G$, with appropriate choice of $\bar \lambda$, see Remark \ref{maphack}.
	\end{rem}
	
	\begin{ex}[Rates for the Radon transform]\label{radon} \normalfont Let $\mathscr R: \tilde {\mathcal V} \equiv L^2(\mathcal O)\to \H$ be the Radon transform, where $\mathcal O=\{x\in\mathbb R^2:\|x\|<1\}$ and $\mathbb H = L^2(\Sigma), \Sigma:=(0,2\pi] \times \mathbb R$, equipped with Lebesgue measure, see p.9~in \cite{N86} for definitions. Then $\mathscr G=\mathscr R$ satisfies (\ref{entrcond}) with $\kappa=1/2, \gamma=0$ and any $\alpha \in \mathbb N$ -- see p.42 in \cite{N86} and note that our $\|\cdot\|_{(H^{1/2}(\mathcal O))^*}$-norm is the $\|\cdot\|_{H_0^{-1/2}(\mathcal O)}$-norm used in \cite{N86} (cf.~Theorem 3.30 in \cite{ML00}). Applying Corollary \ref{cor-gen} with $\alpha\geq 1$, $\mathcal V=H^\alpha_c(\mathcal O)$ and $\lambda = \varepsilon^{(2\alpha+1)/(2\alpha+3)}$ implies that for any $F_0\in H^\alpha_c(\mathcal O)$,
		\begin{equation} \label{radfwdbd}
		\mathbb E^\eps_{F_0}\big[\|\mathscr R(\hat F_{\lambda, \eps})-\mathscr R(F_0)\|^2_{L^2(\Sigma)} + \lambda^2\|\hat F_{\lambda, \eps}\|^2_{H^\alpha_c(\mathcal O)}\big]\lesssim \varepsilon^{(4\alpha+2)/(2\alpha+3)}.
		\end{equation} 
		Using again the estimates on p.42 in \cite{N86} and that H\"older's inequality implies $$\|g\|_{H^{1/2}(\Sigma)} \le \|g\|_{L^2(\Sigma)}^{2\alpha/(2\alpha+1)} \|g\|_{H^{\alpha+1/2}(\Sigma)}^{1/(2\alpha+1)}$$ for $H^\alpha(\Sigma)$ defined as in \cite{N86}, we deduce from (\ref{radfwdbd}) and Markov's inequality that as $\varepsilon \to 0$, \begin{equation*} \|\hat F_{\lambda, \eps} - F_0\|_{L^2(\mathcal O)} \lesssim \|\mathscr R(\hat F)-\mathscr R(F_0)\|_{H^{1/2}(\Sigma)}  =O_{\mathbb P_{f_0}^{\eps}}\big(\varepsilon^{\frac{2\alpha}{2\alpha+3}} \big)\end{equation*} in probability [recall that random variables $(Z_n: n \in \mathbb N)$ are $O_{\Pr}(r_n)$ if $\forall \delta>0 ~\exists M=M_\delta$ s.t.~$\Pr(|Z_n|>Mr_n)<\delta$ for all $n \in\mathbb N$], with constants uniform in $\|F_0\|_{H_c^\alpha(\mathcal O)} \le R$ for any $R>0$. 
		Similarly, if one chooses $\lambda=\varepsilon$ instead, then the MAP estimate from Remark \ref{lin} satisfies
		\begin{equation*}
		\left\|\hat F_{\lambda, \eps}-F_0\right\|_{L^2(\mathcal O)}=O_{\mathbb P_{f_0}^{\eps}}\big(\varepsilon^{\frac{2\beta}{2\beta+3}} \big),~~\text{where }\beta:=\alpha-1>0,
		\end{equation*}
		uniformly over $\|F_0\|_{H^\beta_c(\mathcal O_0)}\leq R$ for $R> 0$.  
	\end{ex}
	
	\begin{rem}[The effect of nonlinearity] \label{nonlin} \normalfont In the proof of Theorem \ref{thm-gen} we follow ideas for $M$-estimation from \cite{saramest, sara2001}, and condition (\ref{entrcond}) is needed to bound the entropy numbers of images $\{\mathscr G(F)\;| \;\|F\|_{H^\alpha}\leq R \}, 0<R<\infty,$ of Sobolev balls under $\mathscr G$, which in turn control the modulus of continuity of the Gaussian process that determines the convergence rate of $\hat F$ to $F_0$. The at most polynomial growth in $\|F\|_{H^\alpha}$ of the Lipschitz constants
		\begin{equation}\label{gammafactor}
		\left(1+\|F\|_{H^\alpha}^\gamma\vee\|H\|_{H^\alpha}^\gamma\right), ~~ \gamma \ge 0,
		\end{equation}
		in (\ref{entrcond}) turns out to be essential in the proof of Theorem \ref{thm-gen}. But even when only a `polynomial nonlinearity' is present ($\gamma>0$), the last term in the condition (\ref{delta-cond}) can become dominant if the penalisation parameter $\lambda$ is too small. The intuition is that, for non-linear problems, too little penalisation can mean that the maximisers $\hat F$ over unbounded parameter spaces behave erratically, yielding sub-optimal convergence rates. 
	\end{rem}

	\section{Results for elliptic PDE models}\label{sec-pderes}
	In this section, we apply Theorem \ref{thm-gen} to  the inverse problems induced by the PDEs (\ref{intro-div}) and (\ref{intro-sch}). We also discuss the implied convergence rates for the parameter $f$.
	
	\subsection{Basic setup and link functions}\label{sec-pde-pre}
	For any integer $\alpha > d/2$ and any constant $K_{min}\in[0,1)$, and denoting the outward pointing normal vector at $x\in\partial \mathcal O$ by $n=n(x)$, define the parameter space (boundary derivatives are understood in the trace sense)
	\begin{equation}\label{Fdef}
	\begin{split}
	\mathcal F:=\mathcal F_{\alpha, K_{min}} =\big\{&f\in H^\alpha(\mathcal O): f>K_{min}\text{ on } \mathcal O,~f=1 \text{ on }\partial \mathcal O,\\
	&\;\;\frac{\partial^jf}{\partial n^j}=0\text{ on }\partial \mathcal O \text{ for }j=1,...,\alpha-1  \big\},
	\end{split}
	\end{equation}
	and its subclasses
	\begin{equation*}
	\mathcal F_{\alpha,r}(R):=\big\{f\in\mathcal F:f>r\text{ on } \mathcal O,\;\|f\|_{H^\alpha}\leq R\big\},~ r \ge K_{min}, R > 0.
	\end{equation*}
	We note that the restrictions $K_{min}<1$ and $f=1$ on $\partial \mathcal O$ in (\ref{Fdef}) are made only for convenience, and could be replaced by any $K_{min}>0$ and $f=\tilde g$ for \emph{fixed} $\tilde g\in C^\infty(\partial \mathcal O)$ satisfying $\tilde g>K_{min}$. Moreover, for estimation over parameter spaces without prescribed boundary values for $f$, see Remark \ref{bdhack}.
	\par 
	We will assume that the coefficient $f$ of the second order linear elliptic partial differential operators featuring in the boundary value problems (\ref{intro-div}) and (\ref{intro-sch}), respectively, belong to $\mathcal F_{\alpha, K_{min}}$ for large enough $\alpha$, and denote by  
	\begin{equation} \label{Gdef}
	G:\mathcal F\to L^2(\mathcal O), \qquad f\mapsto G(f):=u_f,
	\end{equation}
	the corresponding solution maps. Following (\ref{data}) with $\mathbb H = L^2(\mathcal O)$, we then observe
	\begin{equation}\label{data2}
	Y^{(\varepsilon)}=G(f)+\varepsilon \mathbb W, ~\varepsilon>0,
	\end{equation}
	whose law will now be denoted by $\mathbb P_{f}^\varepsilon$ for $f\in\mathcal F$. 
	
	We will apply Theorem \ref{thm-gen} to a suitable bijective re-parameterisation of $\mathcal F$ for which the set $\mathcal V$ one optimises over is a linear space. This is natural for implementation purposes but also necessary to retain the Bayesian interpretation of our estimators from Remark \ref{lin}. To this end, we introduce `link functions' $\Phi$ -- the lowercase and uppercase notation for corresponding functions $f \in\mathcal F$ and $F=\Phi^{-1}\circ f$ will be used throughout. 
	
	\begin{defin}\label{def-phi}
		1. A function $\Phi$ is called a \emph{link function} if $\Phi$ is a smooth, strictly increasing bijective map $\Phi: \mathbb R\to(K_{min},\infty)$ satisfying $\Phi(0)=1$ and $\Phi'>0$ on $\mathbb R$.
		\par
		2. A function $\Phi:(a,b)\to \mathbb R$, $-\infty\leq a<b\leq \infty$, is called \emph{regular} if all derivatives of $\Phi$ of order $k\geq 1$ are bounded, i.e.
		\begin{equation}\label{phibddder}
		\forall k\geq 1: \quad \sup_{x\in(a,b)} \left|\Phi^{(k)}(x)\right|<\infty.
		\end{equation}
	\end{defin}
	In the notation of Theorem \ref{thm-gen}, throughout this section we set $\mathbb H=L^2(\mathcal O)$, $\tilde{\mathcal V}=\mathcal V:= \{\Phi^{-1}\circ f:f\in\mathcal F \}$ to be the `pulled-back' parameter space, and
	\begin{equation}\label{G2}
	\mathscr G: \mathcal V\to  L^2(\mathcal O),\quad \mathscr G(F):=G(\Phi\circ F),
	\end{equation} 
	For $\mathcal F$ as in (\ref{Fdef}), one easily verifies that
	\begin{equation*}
	\mathcal V=\left\{F\in H^\alpha: \frac{\partial^jF}{\partial n^j}=0\text{ on }\partial \mathcal O \text{ for }j=0,...,\alpha-1  \right\}=H^\alpha_c(\mathcal O),
	\end{equation*}
	where the second equality follows from the characterization of $H^\alpha_c(\mathcal O)$ in Theorem 11.5 of \cite{lionsmagenes}. 
	Given a realisation of (\ref{data2}) and a regular link function $\Phi$, we define the \emph{generalised Tikhonov regularised functional} $J_{\lambda,\varepsilon}:\mathcal F\to \mathbb R$,
	\begin{equation}\label{Jdef}
	J_{\lambda,\varepsilon}(f):=2\langle Y^{(\varepsilon)},G(f)\rangle_{L^2}-\|G(f)\|_{L^2}^2-\lambda^2\|\Phi^{-1}\circ f\|_{H^\alpha}^2, ~~\lambda>0.
	\end{equation}
	Then for all $f\in\mathcal F$, we have $\mathscr J_{\lambda, \varepsilon}(F)=J_{\lambda,\eps}(f)$ in the notation (\ref{Jdef2}), and maximising $J_{\lambda, \eps}$ over $\mathcal F$ is equivalent to maximising $\mathscr J_{\lambda,\eps}$ over $H^\alpha_c=\mathcal V$. Any pair of maximisers will be denoted by
	\begin{equation*}
	\hat f\in \arg \max_{f\in \mathcal F}J_{\lambda, \eps}(f), ~~ \hat F=\Phi^{-1}\circ \hat f\in \arg \max_{F\in H^\alpha_c}\mathscr J_{\lambda, \eps}(F),~~G(\hat f)=\mathscr G(\hat F).
	\end{equation*}

	The proofs of the theorems which follow are based on an application of Theorem \ref{thm-gen}, after verifying that the map (\ref{G2}) satisfies (\ref{entrcond}) with $\mathcal V = H^\alpha_c$ and suitable values of $\kappa, \gamma, \alpha$. The verification of (\ref{entrcond}) is based on PDE estimates that control the modulus of continuity of the solution map  (\ref{Gdef}), and on certain analytic properties of the link function $\Phi$.  In practice often the choice $\Phi=\exp$ is made (cf.~\cite{stuart10}), but our results suggest that the use of a \emph{regular} link function might be preferable. Indeed, the polynomial growth requirement (\ref{gammafactor}) discussed above is not met if one chooses for $\Phi$ the exponential function. Before we proceed, let us give an example of a regular link function.
	
	\begin{ex} \normalfont
		Define the function $\phi: \mathbb R \to (0,\infty)$ by $\phi(x) = e^x1_{x<0} + (1+x) 1_{x \ge 0}$, let $\psi:\mathbb R\to [0,\infty)$ be a smooth, compactly supported function with $\int_\mathbb R \psi=1$, and write $\phi \ast \psi=\int_{\R}\phi(\cdot-y)\psi(y)dy$ for their convolution. It follows from elementary calculations that, for any $K_{min} \in \mathbb R$,
		\[\Phi:\R\to (K_{min},\infty),~\Phi:= K_{min}+\frac{1-K_{min}}{\psi\ast\phi(0)}\psi \ast \phi, \]
		is a regular link function with range $(K_{min},\infty)$.
	\end{ex}

	\subsection{Divergence form equation}\label{sec-div}
	For a \emph{given} source function $g \in C^\infty(\mathcal O)$, we consider the Dirichlet boundary value problem
	\begin{equation}\label{div}
	\begin{cases}
	\nabla\cdot (f\nabla u)=g\quad \textnormal{ on } \mathcal O,\\
	u=0 \quad  \textnormal{ on }\mathcal \partial O,
	\end{cases}
	\end{equation}
	where $f \in \mathcal F_{\alpha, K_{min}}$ (see (\ref{Fdef})) for some $\alpha>d/2+1, K_{min}>0$. Then (\ref{h-emb}) implies $f\in C^{1+\eta}(\mathcal O)$ for some $\eta>0,$ and the Schauder theory for elliptic PDEs (Theorem 6.14 in \cite{gt}) then gives that (\ref{div}) has a unique classical solution in $C(\bar{\mathcal O})\cap C^{2+\eta}(\mathcal O)$ which we shall denote by $G(f)=u_{f}$.

	\paragraph{Upper bounds} For a link function $\Phi$ and $f_1,f_2\in\mathcal F$, define (cf.~(\ref{taudef})) 
	\[\mu_{\lambda}(f_1,f_2):=\|G(f_1)-G(f_2)\|^2_{L^2}+\lambda^2\|\Phi^{-1}\circ f_1\|_{H^\alpha}^2=\tau_\lambda(F_1,F_2). \]
	\begin{thm}[Prediction error]\label{thm-div-pred}
		Let $\mathcal F$ be given by (\ref{Fdef}) for some integer $\alpha>(d/2+2) \vee  (2d-1)$ and $K_{min}\in (0,1)$. Let $G(f)=u_f$ denote the unique solution of (\ref{div}) and let $Y^{(\varepsilon)} \sim \P_{f_0}^\eps$ from (\ref{data2}) for some $f_0 \in \mathcal F$. Moreover, suppose that $\Phi:\mathbb R\to (K_{min}, \infty)$ is a regular link function and that $J_{\lambda_\varepsilon,\varepsilon}$ is given by (\ref{Jdef}), where
		$$\lambda_\varepsilon:=\varepsilon^{\frac{2(\alpha+1)}{2(\alpha+1)+d}}.$$
		Then the following holds.
		\begin{enumerate}
			\item For each $f_0\in\mathcal F$ and $\varepsilon>0$, almost surely under $\P_{f_0}^\eps$, there exists a maximiser $\hat f_{\varepsilon} \in \mathcal F$ of $J_{\lambda_\varepsilon,\varepsilon}$ over $\mathcal F$.
			\item For each $R>0$, $r > K_{min}$, there exist finite constants $c_1,c_2>0$ such that for any maximiser $\hat f_\varepsilon \in \mathcal F$ of $J_{\lambda_\varepsilon,\varepsilon}$, all $0<\varepsilon < 1$ and all $M\geq c_1$,
			\begin{equation}\label{div-conc}
			\sup_{f_0\in\mathcal F_{\alpha,r}(R)}\P_{f_0}^\varepsilon\Big(\mu^2_{\lambda_\varepsilon}(\hat f_\varepsilon,f_0)\geq M^2\varepsilon^{\frac{4(\alpha+1)}{2(\alpha+1)+d}}\Big)\leq \exp\Big(-\frac{M^2\lambda_\varepsilon^2}{c_2\varepsilon^2}\Big).
			\end{equation}
			\item For each $R>0$, $r > K_{min}$ and $\beta\in [0,\alpha+1]$, there exists a constant $c_3$ such that for any maximiser $\hat f_\varepsilon \in \mathcal F$ of $J_{\lambda_\varepsilon,\varepsilon}$ with corresponding $u_{\hat f_\varepsilon}$, for all $0<\varepsilon<1$,
			\begin{equation} \label{finprat}
			\sup_{f_0\in\mathcal F_{\alpha,r}(R)}\mathbb E^{\varepsilon}_{f_0}\left\|u_{\hat f_\varepsilon}-u_{f_0}\right\|_{H^\beta}\leq c_3\varepsilon^{\frac{2(\alpha+1-\beta)}{2(\alpha+1)+d}}.
			\end{equation}
		\end{enumerate}
	\end{thm}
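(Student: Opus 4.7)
The plan is to apply Theorem \ref{thm-gen} in the reparametrised setting of Section \ref{sec-pde-pre} with $\mathbb H = L^2(\mathcal O)$, $\tilde{\mathcal V}=\mathcal V = H^\alpha_c(\mathcal O)$, smoothing index $\kappa = 1$, and a polynomial nonlinearity degree $\gamma \ge 0$ to be extracted from the PDE analysis. Part 1 follows immediately: $H^\alpha_c(\mathcal O)$ is a closed linear subspace of $H^\alpha(\mathcal O)$, hence weakly closed in $\mathcal H = H^\alpha_c$, so Part 1 of Theorem \ref{thm-gen} produces a maximiser $\hat F \in H^\alpha_c$ of $\mathscr J_{\lambda_\eps,\eps}$, and then $\hat f_\eps = \Phi \circ \hat F \in \mathcal F$ maximises $J_{\lambda_\eps,\eps}$ by (\ref{G2}) and the identity $\mathscr J_{\lambda_\eps,\eps}(F)=J_{\lambda_\eps,\eps}(\Phi\circ F)$.

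The crux is verifying that $\mathscr G(F) = G(\Phi \circ F)$ is $(1,\gamma,\alpha)$-regular in the sense of (\ref{entrcond}). I would split this into a PDE stability estimate and a link function estimate. For the first, let $f_i = \Phi \circ F_i$; then $w := u_{f_1} - u_{f_2}$ solves
\begin{equation*}
\nabla\cdot(f_1\nabla w) = -\nabla\cdot\big((f_1-f_2)\nabla u_{f_2}\big), \qquad w\rvert_{\partial \mathcal O}=0.
\end{equation*}
To introduce a negative Sobolev norm of $f_1-f_2$ rather than an $L^2$ norm, I would run a duality argument: for $\phi \in L^2(\mathcal O)$ let $v$ solve $\nabla\cdot(f_1\nabla v)=\phi$ with zero boundary values; two integrations by parts give
\begin{equation*}
\langle w,\phi\rangle_{L^2} = \big\langle f_1-f_2,\; \nabla u_{f_2}\cdot\nabla v \big\rangle_{L^2}.
\end{equation*}
Since $\alpha > d/2+2$, Sobolev embedding and Schauder estimates give $\|u_{f_2}\|_{C^2}\le P_1(\|f_2\|_{H^\alpha})$, and elliptic regularity gives $\|v\|_{H^2}\lesssim \|\phi\|_{L^2}$ with a constant depending polynomially on $\|f_1\|_{H^\alpha}$ and on $1/\inf f_1 \ge 1/K_{min}$. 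A Sobolev multiplication estimate then yields $\|\nabla u_{f_2}\cdot\nabla v\|_{H^1}\lesssim \|u_{f_2}\|_{C^2}\|v\|_{H^2}$, so that upon taking the supremum over unit $\phi\in L^2$,
\begin{equation*}
\|u_{f_1}-u_{f_2}\|_{L^2} \lesssim \big(1+\|f_1\|_{H^\alpha}\vee\|f_2\|_{H^\alpha}\big)^{\gamma_1}\,\|f_1-f_2\|_{(H^1(\mathcal O))^*}.
\end{equation*}
For the link function step, I would use $\Phi\circ F_1-\Phi\circ F_2 = (F_1-F_2)\int_0^1\Phi'(tF_1+(1-t)F_2)dt$; regularity of $\Phi$ and $\alpha>d/2+1$ make the integral a multiplier on $H^1$ with $C^1$-norm $\lesssim 1+\|F_1\|_{H^\alpha}+\|F_2\|_{H^\alpha}$, so $\|\Phi\circ F_1-\Phi\circ F_2\|_{(H^1)^*}\lesssim (1+\|F_1\|_{H^\alpha}+\|F_2\|_{H^\alpha})\|F_1-F_2\|_{(H^1)^*}$. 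Composing these two inequalities and using that $\|\Phi\circ F\|_{H^\alpha}\le P_2(\|F\|_{H^\alpha})$ for regular $\Phi$ gives (\ref{entrcond}) with $\kappa=1$ and a specific $\gamma$; the hypothesis $\alpha>2d-1$ guarantees $\alpha > \gamma d/2 - \kappa$, which is the integer-smoothness condition required by Theorem \ref{thm-gen}. (These auxiliary PDE and multiplier bounds are precisely those promised in Sections \ref{sec-pde-facts} and \ref{sec-reg}.)

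With $(1,\gamma,\alpha)$-regularity in hand, Part 2 follows by applying Part 2 of Theorem \ref{thm-gen} with $F_*=F_0$, so $\tau_{\lambda_\eps}^2(F_*,F_0)=\lambda_\eps^2\|F_0\|_{H^\alpha}^2$ is uniformly $O(\lambda_\eps^2)$ on $\mathcal F_{\alpha,r}(R)$ (using that $\Phi^{-1}$ and its derivatives are bounded on the range $[r,\infty)$ of $f_0$). Choosing $\delta = M\eps^{2(\alpha+1)/(2(\alpha+1)+d)}$ and checking (\ref{delta-cond}) at the optimal rate $\lambda_\eps = \eps^{2(\alpha+1)/(2(\alpha+1)+d)}$ (with $s=(\alpha+1)/d$), the tail bound (\ref{main-thm-tau}) yields (\ref{div-conc}). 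For Part 3 I would interpolate: on the event in (\ref{div-conc}) of high $\mathbb P_{f_0}^\eps$-probability, $\|\hat F\|_{H^\alpha}$ is bounded, hence so is $\|\hat f_\eps\|_{H^\alpha}$, and elliptic regularity for (\ref{div}) gives $\|u_{\hat f_\eps}-u_{f_0}\|_{H^{\alpha+1}}\lesssim 1$; combined with the $L^2$-rate (\ref{div-conc}) and
\begin{equation*}
\|v\|_{H^\beta}\lesssim \|v\|_{L^2}^{1-\beta/(\alpha+1)}\,\|v\|_{H^{\alpha+1}}^{\beta/(\alpha+1)}, \qquad \beta\in[0,\alpha+1],
\end{equation*}
this gives the stated rate $\eps^{2(\alpha+1-\beta)/(2(\alpha+1)+d)}$; the complementary event contributes negligibly thanks to the Gaussian tail in (\ref{div-conc}) and crude a-priori bounds on the penalised functional.

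The main obstacle is producing the $(H^1(\mathcal O))^*$-norm on the right-hand side of (\ref{entrcond}) --- i.e.\ upgrading the standard $\|f_1-f_2\|_{L^2}$-type stability estimate for divergence form PDEs by one order of smoothing --- while keeping blow-up in $\|f_i\|_{H^\alpha}$ polynomial (rather than exponential, as would arise from the common choice $\Phi=\exp$). This is precisely what forces the use of regular link functions and the quantitative Sobolev embedding $\alpha>(d/2+2)\vee(2d-1)$, the second bound ensuring that the polynomial factor $(\delta/\lambda)^{\gamma/2s}$ in (\ref{delta-cond}) remains subdominant at the optimal choice of $\lambda_\eps$.
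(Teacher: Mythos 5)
Your proposal follows the paper's own proof in all essential respects: verify that $\mathscr G(F)=G(\Phi\circ F)$ is $(\kappa,\gamma,\alpha)$-regular with $\kappa=1$ via a duality/integration-by-parts argument (this is exactly the mechanism of Lemma~\ref{lem-div-h2} plus the divergence theorem step in Section~\ref{sec-div-pfs}), control the link-function composition in $(H^1)^*$ (your mean-value/integral factorisation is the content of Lemma~\ref{lem-phi}, item~4), apply Theorem~\ref{thm-gen} with $F_*=F_0$ for Part~2, and interpolate $L^2$ against $H^{\alpha+1}$ for Part~3. The one place you are slightly informal is the bad-event contribution in Part~3: on $\{\mu_{\lambda_\eps}(\hat f_\eps,f_0)>M\lambda_\eps\}$ there is no uniform deterministic bound on $\|u_{\hat f_\eps}-u_{f_0}\|_{H^\beta}$; the paper handles this by bounding $\|\hat F\|_{H^\alpha}\lesssim\mu_{\lambda_\eps}/\lambda_\eps$, converting via Lemma~\ref{lem-div-bd} to polynomial growth of $\|u_{\hat f_\eps}\|_{H^{\alpha+1}}$, and then summing dyadic shells against the Gaussian tail of~(\ref{div-conc}), which is the precise form of your ``crude a-priori bounds''. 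You also leave $\gamma$ implicit; tracking through the paper's estimates it is $\gamma=4$, and that is exactly why $\alpha>2d-1$ appears (since $\gamma d/2-\kappa=2d-1$).
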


	\paragraph{Lower bounds} We now give a minimax lower bound on the rate of estimation for $u_f$ which matches the bound in (\ref{finprat}). To facilitate the exposition we only consider the unit ball $\mathcal O=D:=\big\{x\in\mathbb R^d: \|x\|< 1 \big\}$, set $g=1$ identically on $\mathcal O$, and fix $H^\beta$-loss with $\beta=2$.
	\begin{thm}\label{thm-div-lb}
		For $K_{min}\in (0,1), \alpha>d/2+1$, $\mathcal O = D$ and $g=1$ on $\mathcal O$, consider solutions $u_f, f \in \mathcal F,$ to (\ref{div}). Then there exists $C<\infty$ such that for all $\varepsilon>0$ small enough,
		\begin{equation}\label{div-lower-bd}
		\inf_{\hat u_\varepsilon}\sup_{f_0\in \mathcal F_{\alpha, r}(R)}\mathbb E_{f_0}^\varepsilon\|\hat u_\varepsilon- u_{f_0}\|_{H^2}\geq C\varepsilon^{\frac{2(\alpha-1)}{2(\alpha+1)+d}},~r>K_{min}, R>0,
		\end{equation}
		where the infimum ranges over all measurable functions $\hat u_\varepsilon= \hat u(Y^{(\eps)})$ of $Y^{(\eps)}$ from (\ref{data2}) that take values in $H^2$.
	\end{thm}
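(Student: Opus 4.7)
The plan is a standard Fano-type minimax lower bound via a rich hypothesis family, based on the linearisation of $G$ at a convenient base point $f_0 \equiv 1$. For this choice, (\ref{div}) admits the explicit solution $u_{f_0}(x) = (|x|^2 - 1)/(2d)$, so $\nabla u_{f_0}(x) = x/d$ is bounded away from zero on any subset of $\mathcal O$ avoiding the origin. Fix a smooth mean-zero bump $\psi_0 \in C^\infty_c(B(0,1/2))$. For $h > 0$ small, pack $m \asymp h^{-d}$ disjoint balls $B_j = B(x_j, h/2)$ into an annular region $A \subset \mathcal O$ with $r_0 \leq |x| \leq 1/2$ (for some fixed $r_0 > 0$), and set $\psi_j(x) := \psi_0((x - x_j)/h)$. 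For $\sigma \in \{0,1\}^m$ and $\eta > 0$ to be chosen, define hypotheses
\[
f_\sigma := f_0 + \eta \sum_{j=1}^m \sigma_j \psi_j.
\]
Since the $\psi_j$ are compactly supported in the interior of $\mathcal O$, the boundary conditions in (\ref{Fdef}) are inherited from $f_0$; positivity $f_\sigma > r$ holds as soon as $\eta \|\psi_0\|_\infty < 1-r$; and disjoint support gives $\|f_\sigma - f_0\|^2_{H^\alpha} = \eta^2 \sum \sigma_j^2 \|\psi_j\|_{H^\alpha}^2 \lesssim \eta^2 m h^{d - 2\alpha} = \eta^2 h^{-2\alpha}$, so $\eta = c h^\alpha$ with $c > 0$ small keeps $\{f_\sigma\}_\sigma \subset \mathcal F_{\alpha, r}(R)$ for $\eps$ small.

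Bound the KL divergence from above: $\mathrm{KL}(\P_{f_\sigma}^\eps, \P_{f_{\sigma'}}^\eps) = \|v\|_{L^2}^2/(2\eps^2)$ where $v := u_{f_\sigma} - u_{f_{\sigma'}}$ solves $\nabla \cdot (f_{\sigma'}\nabla v) = -\nabla \cdot (\delta f \nabla u_{f_\sigma})$ with $\delta f = \eta \sum_j (\sigma_j - \sigma'_j)\psi_j$. A standard duality argument (testing against $w \in H^1_0$ solving $\nabla \cdot (f_{\sigma'}\nabla w) = -v$, for which elliptic regularity gives $\|w\|_{H^2} \lesssim \|v\|_{L^2}$) yields $\|v\|_{L^2} \lesssim \|\delta f\|_{(H^1(\mathcal O))^*}$, which is precisely the $(\kappa = 1)$-regularity of $G$ used in the proof of Theorem \ref{thm-div-pred}. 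The mean-zero property $\int \psi_j = 0$ and the Poincar\'e inequality on each $B_j$ sharpen the dual norm: for $\phi \in H^1(\mathcal O)$,
\[
\left| \textstyle\int \psi_j \phi \right| = \left|\textstyle\int \psi_j (\phi - \bar \phi_j)\right| \leq \|\psi_j\|_{L^2}\, h\, \|\nabla \phi\|_{L^2(B_j)} \lesssim h^{d/2 + 1} \|\nabla \phi\|_{L^2(B_j)},
\]
where $\bar\phi_j := |B_j|^{-1}\int_{B_j}\phi$. Summing over $j$ via Cauchy--Schwarz and disjoint support gives $\|\delta f\|_{(H^1)^*} \lesssim \eta h^{d/2+1}\sqrt{d_H(\sigma, \sigma')}$, so $\mathrm{KL} \lesssim \eta^2 h^{d+2} d_H(\sigma, \sigma')/\eps^2$.

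Bound the $H^2$ separation from below by linearisation: let $v^{\mathrm{lin}}$ solve $\Delta v^{\mathrm{lin}} = -\nabla \cdot (\delta f \nabla u_{f_0}) = -\delta f - d^{-1}(\nabla \delta f)\cdot x$. Then $\|v\|_{H^2} \geq d^{-1/2}\|\Delta v\|_{L^2}$, and the high-frequency term $d^{-1}(\nabla \delta f) \cdot x$ dominates in $L^2$: by disjoint support, radial symmetry of $\psi_0$ and $|x_j| \geq r_0$, one finds $\|d^{-1}(\nabla\delta f)\cdot x\|_{L^2}^2 \gtrsim (r_0^2/d^3)\eta^2 d_H(\sigma, \sigma') h^{d-2}$, which exceeds $\|\delta f\|_{L^2}^2 \sim \eta^2 d_H h^d$ by a factor $\sim h^{-2}$. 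The quadratic remainder $w := v - v^{\mathrm{lin}}$ satisfies $\nabla \cdot (f_0 \nabla w) = -\nabla \cdot (\delta f \nabla (u_{f_\sigma} - u_{f_0})) - \nabla \cdot ((f_{\sigma'} - f_0)\nabla v)$, whose right-hand side is $O(\eta^2)$ in suitable norms, so elliptic regularity combined with product-rule estimates in $H^1$ gives $\|w\|_{H^2} = o(\|v^{\mathrm{lin}}\|_{H^2})$ as $\eps \to 0$. We conclude $\|v\|_{H^2}^2 \gtrsim \eta^2 h^{d-2} d_H(\sigma, \sigma')$.

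Finally, apply the Varshamov--Gilbert lemma to extract $S \subset \{0,1\}^m$ with $|S| \geq 2^{m/8}$ and pairwise $d_H \geq m/8$. Set $h := c_1 \eps^{2/(2(\alpha+1)+d)}$ and $\eta := c_2 h^\alpha$, choosing $c_1, c_2 > 0$ small enough that the maximal pairwise KL, namely $\lesssim \eta^2 h^2/\eps^2 \asymp h^{-d}$, is at most $(1/2) \log |S| \asymp h^{-d}$. The pairwise $H^2$-separation on $S$ is then $\gtrsim \eta h^{-1} = c_2 h^{\alpha - 1} \asymp \eps^{2(\alpha - 1)/(2(\alpha+1)+d)}$, and Fano's inequality delivers (\ref{div-lower-bd}). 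The main technical obstacle lies in the quadratic correction $w$: verifying $\|w\|_{H^2} = o(\|v^{\mathrm{lin}}\|_{H^2})$ uniformly in $\sigma, \sigma' \in S$ requires tracking product-rule estimates of $\delta f \cdot \nabla (u_{f_\sigma} - u_{f_0})$ and $(f_{\sigma'} - f_0) \cdot \nabla v$ at scale $h \sim \eps^{2/(2(\alpha+1)+d)}$, using sup-norm bounds on the perturbations together with the smoothness of $f_0$ and $u_{f_0}$.
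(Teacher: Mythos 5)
Your proposal is correct but takes a genuinely different route from the paper's. Where the paper constructs \emph{anisotropic} tensor Daubechies wavelets that are ``steeper'' in the $x_1$-direction, packed into a cube $[a,b]^d$ near a corner of the disk, you use \emph{isotropic} radial mean-zero bumps packed into an annulus $r_0\le|x|\le 1/2$. The paper needs the anisotropy so that, in the decomposition $I-II-III$ of (\ref{lowerbound}), the term $\|\partial_{x_1}(f_m-f_{m'})\,\partial_{x_1}u_{f_{m'}}\|_{L^2}$ provably dominates the contributions from the other coordinate directions (the constant $c>1$ that squashes the wavelet in all but the $x_1$-direction is chosen precisely to enforce this). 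You sidestep this by appealing to rotational symmetry: for a radial $\psi_0$, $\int\nabla\psi_0\nabla\psi_0^T\,dy=\tfrac{1}{d}\|\nabla\psi_0\|^2_{L^2}I_d$, so $\int|\nabla\psi_0(y)\cdot x_j|^2dy=\tfrac{|x_j|^2}{d}\|\nabla\psi_0\|^2_{L^2}\gtrsim r_0^2$ and no alignment issue arises. The second structural difference is that you linearise $G$ at $f_0\equiv 1$, work with $v^{\mathrm{lin}}=\Delta^{-1}[-\nabla\cdot(\delta f\nabla u_{f_0})]$, and estimate the quadratic remainder $w=v-v^{\mathrm{lin}}$ separately; the paper instead manipulates the exact difference $u_{f_m}-u_{f_{m'}}$ throughout. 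Both work: your remainder estimate $\|w\|_{H^2}\lesssim(\eta h^{-1})^2\asymp h^{2(\alpha-1)}=o(h^{\alpha-1})=o(\|v^{\mathrm{lin}}\|_{H^2})$ does hold uniformly in $\sigma,\sigma'$ (using $\|\delta f\|_{C^1}\vee\|f_{\sigma'}-1\|_{C^1}\asymp\eta h^{-1}$, elliptic regularity for $\Delta$, and $\alpha>1$), so the obstacle you flag at the end is real but surmountable. Finally, your Poincar\'e/mean-zero sharpening of $\|\delta f\|_{(H^1)^*}$ plays exactly the role that the wavelet characterisation of $H^{-1}(\R^d)$ plays in the paper's Step 7 (Daubechies wavelets have vanishing moments, so both constructions gain the crucial extra factor of $h$). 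In short: the paper's route is more self-contained (no linearisation step to justify) at the cost of the somewhat ad hoc anisotropic wavelet construction; your route is structurally cleaner (the separation and the KL bound both reduce transparently to properties of $v^{\mathrm{lin}}$) but requires the remainder estimate as an extra step. One caveat to address when writing this up: you invoke radial symmetry of $\psi_0$ in the $H^2$-separation step, so you should require radiality when fixing $\psi_0$ at the outset; radial mean-zero bumps certainly exist, but ``a smooth mean-zero bump'' as stated does not automatically imply it.
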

	
	Observe that (\ref{div-lower-bd}) coincides with the lower bound for estimating $u_{f_0}$ as a regression function without PDE-constraint in $H^{\alpha+1}$ under $H^2$-loss. Note however that unconstrained `off the shelf' regression function estimators $\tilde u_\eps$ for $u_f$ will not satisfy the non-linear PDE constraint $\tilde u =G(\tilde f)$ for some $\tilde f \in \mathcal F$, thus providing no recovery of the PDE coefficient $f_0$ itself.
	\smallskip
	\paragraph{Rates for $f$ via stability estimates}
	
	For estimators $u_{\hat f_\eps}$ that lie in the range of the forward map $G$, we can resort to `stability estimates' which allow to control the convergence rate of $\hat f_\eps$ to $f_0$ by the rate of $G(\hat f_\eps)=u_{\hat f_\eps}$ towards $G(f_0)=u_{f_0}$, in appropriate norms. Injectivity and global stability estimates for this problem have been studied in several papers since Richter \cite{R81}, see the recent contribution \cite{devore} and the discussion therein. They require additional assumptions, a very common choice being that $g>0$ throughout $\bar {\mathcal O}$. The usefulness of these estimates depends in possibly subtle ways on the class of $f$'s one constrains the problem to. The original stability estimate given in \cite{R81} controls $\|f_1-f_2\|_\infty$ in terms of $\|u_{f_1}-u_{f_2}\|_{C^2}$ which does not combine well with the $H^\beta$- convergence rates obtained in Theorem \ref{thm-div-pred}. The results proved in \cite{devore} are designed for `low regularity' cases where $\alpha \in (0,1)$: they give at best	
	\begin{equation}\label{devore-stab}
	\|f_1-f_2\|_{L^2}\leq C(f_1,f_2) \|u_{f_1}-u_{f_2}\|_{H^1}^{1/2}, \qquad f_1,f_2\in\mathcal F, ~d \ge 2,
	\end{equation}
	which via Theorem \ref{thm-div-pred} would imply a convergence rate of $\eps^{\frac{\alpha}{2(\alpha+1)+d}}$ for $\|\hat f_\eps-f_0\|_{L^2}$. For higher regularity $\alpha \ge 2$ relevant here, this can be improved. We prove in Lemma \ref{lem-div-stab} below a Lipschitz stability estimate for the map $u_f\mapsto f$ between the spaces $H^2$ and $L^2$, and combined with Theorem \ref{thm-div-pred} this gives the following rate bound for $\hat f_\eps - f_0$.
	\begin{thm}\label{thm-div-f}
		Suppose that $\alpha$, $K_{min}$, $\mathcal F$, $G$, $\Phi$, $\lambda_\varepsilon$ are as in Theorem \ref{thm-div-pred} and that in addition, $\inf_{x\in\mathcal O} g(x)\ge g_{min}$ for some $g_{min}>0$. Let $\hat f_\varepsilon \in \mathcal F$ be any maximiser of $J_{\lambda_\varepsilon,\varepsilon}$. Then, for each $r >K_{min}$ and $R<\infty$, there exists a constant $C>0$ such that we have for all $0 < \eps <1$,
		\begin{equation}\label{div-f}
		\sup_{f_0\in\mathcal F_{\alpha,r}(R)}\E^{\varepsilon}_{f_0}\|\hat f_\varepsilon-f_0\|_{L^2}\leq C\varepsilon^{\frac{2(\alpha-1)}{2(\alpha+1)+d}}.
		\end{equation}
	\end{thm}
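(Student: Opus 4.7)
The theorem follows directly by combining the prediction-error rate of Theorem \ref{thm-div-pred}(3) with $\beta = 2$ and the Lipschitz stability estimate announced in Lemma \ref{lem-div-stab}, which asserts (roughly) $\|f_1 - f_2\|_{L^2} \lesssim \|u_{f_1} - u_{f_2}\|_{H^2}$ for $f_i$ with uniform $C^1$-bounds and lower bound $\ge K_{min}$, under the hypothesis $g \ge g_{min} > 0$. With $\beta = 2$ (admissible since $\alpha \ge 2$), (\ref{finprat}) gives exactly $\mathbb E_{f_0}^\varepsilon \|u_{\hat f_\varepsilon} - u_{f_0}\|_{H^2} \le c_3 \varepsilon^{2(\alpha-1)/(2(\alpha+1)+d)}$, so if the stability constant could be taken uniform the claim would be immediate.

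\textbf{Controlling the stability constant.} The subtlety is that the constant in Lemma \ref{lem-div-stab} depends on norms of its arguments: $f_0 \in \mathcal F_{\alpha, r}(R)$ is a priori $C^{1+\eta}$-bounded via the Sobolev embedding $H^\alpha \hookrightarrow C^{1+\eta}$ (since $\alpha > d/2 + 1$), but $\hat f_\varepsilon$ is not. I would handle this via a truncation argument using (\ref{div-conc}). Fix a large constant $M \ge c_1$ and consider the event $A_M := \{\mu_{\lambda_\varepsilon}^2(\hat f_\varepsilon, f_0) \le M^2 \varepsilon^{4(\alpha+1)/(2(\alpha+1)+d)}\}$. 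Since $\lambda_\varepsilon^2 = \varepsilon^{4(\alpha+1)/(2(\alpha+1)+d)}$, on $A_M$ we have $\|\Phi^{-1}\circ \hat f_\varepsilon\|_{H^\alpha} \le M$, and hence by the composition estimates of Section \ref{sec-reg} for the regular link function $\Phi$ combined with the Sobolev embedding, $\|\hat f_\varepsilon\|_{C^1} \le C_\Phi(M)$. The pointwise bound $\hat f_\varepsilon > K_{min}$ holds pathwise because $\Phi$ takes values in $(K_{min}, \infty)$. Thus Lemma \ref{lem-div-stab} applies on $A_M$ with a constant depending only on $K_{min}, r, R, M, g_{min}$, $\Phi$.

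\textbf{Assembling the expectation bound.} I would split
\[
\mathbb E_{f_0}^\varepsilon \|\hat f_\varepsilon - f_0\|_{L^2} = \mathbb E_{f_0}^\varepsilon[\|\hat f_\varepsilon - f_0\|_{L^2} 1_{A_M}] + \mathbb E_{f_0}^\varepsilon[\|\hat f_\varepsilon - f_0\|_{L^2} 1_{A_M^c}].
\]
On $A_M$, stability combined with Theorem \ref{thm-div-pred}(3) ($\beta = 2$) yields
\[
\mathbb E_{f_0}^\varepsilon[\|\hat f_\varepsilon - f_0\|_{L^2} 1_{A_M}] \le C \, \mathbb E_{f_0}^\varepsilon \|u_{\hat f_\varepsilon} - u_{f_0}\|_{H^2} \lesssim \varepsilon^{2(\alpha-1)/(2(\alpha+1)+d)}.
\]
On $A_M^c$, the at-most-linear growth of $\Phi$ (its derivatives are bounded, by Definition \ref{def-phi}) gives the crude bound $\|\hat f_\varepsilon - f_0\|_{L^2} \lesssim 1 + \|\Phi^{-1}\circ \hat f_\varepsilon\|_{H^\alpha}$. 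Cauchy--Schwarz together with the Gaussian tail $\mathbb P_{f_0}^\varepsilon(A_M^c) \le \exp(-c M^2 \lambda_\varepsilon^2/\varepsilon^2)$ from (\ref{div-conc}), where $\lambda_\varepsilon^2/\varepsilon^2 = \varepsilon^{-2d/(2(\alpha+1)+d)} \to \infty$, renders this contribution super-polynomially small in $\varepsilon$ for any fixed sufficiently large $M$. The required uniform $L^2$-moment of $\|\Phi^{-1}\circ \hat f_\varepsilon\|_{H^\alpha}$ is obtained by integrating the same tail bound for $t \ge c_1$.

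\textbf{Main obstacle.} The substantive PDE ingredient is Lemma \ref{lem-div-stab} itself: the Lipschitz stability estimate $\|f_1-f_2\|_{L^2} \lesssim \|u_{f_1}-u_{f_2}\|_{H^2}$. Its proof (done separately) is the nontrivial step, relying crucially on $g \ge g_{min} > 0$ to produce a non-vanishing coefficient in the transport identity $\nabla\cdot((f_1-f_2)\nabla u_{f_1}) = -\nabla\cdot(f_2 \nabla(u_{f_1}-u_{f_2}))$; given it, the present theorem is a short combination of truncation and the rate of Theorem \ref{thm-div-pred}(3).
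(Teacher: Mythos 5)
Your proof is correct and rests on the same two pillars as the paper's argument: the Lipschitz stability estimate of Lemma~\ref{lem-div-stab} and control of the random factor $\|\hat f_\eps\|_{C^1}$ in that estimate via the concentration inequality (\ref{div-conc}). The implementation differs slightly. The paper invokes Lemma~\ref{lem-div-stab} directly, interpolates the resulting $H^2$-norm between $L^2$ and $H^{\alpha+1}$ as in (\ref{est-1})--(\ref{est-2}), and then decomposes the expectation over the dyadic shells $A_j$ from (\ref{Aj}), summing the exponential tails; this bounds $\|\hat f_\eps\|_{C^1}$ and the $H^{\alpha+1}$-norm of $u_{\hat f_\eps}$ simultaneously within each shell. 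You instead use a single cutoff $A_M=\{\mu_{\lambda_\eps}^2(\hat f_\eps,f_0)\le M^2\eps^{4(\alpha+1)/(2(\alpha+1)+d)}\}$, observe that on $A_M$ the constraint $\lambda_\eps^2\|\hat F_\eps\|_{H^\alpha}^2\le \mu_{\lambda_\eps}^2$ forces $\|\hat F_\eps\|_{H^\alpha}\le M$ and hence $\|\hat f_\eps\|_{C^1}\le C_\Phi(M)$, so that the stability constant is deterministic and (\ref{finprat}) with $\beta=2$ can be applied directly; you then bound the $A_M^c$ contribution crudely via Cauchy--Schwarz and the super-polynomially small tail. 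This is a genuine, if modest, simplification: because the cutoff renders the stability constant deterministic, one can cite the already-proven rate (\ref{finprat}) instead of re-running the interpolation-plus-shell argument. The paper's shell approach is slightly more refined (it tracks how the $C^1$-norm and $H^{\alpha+1}$-norm grow across shells rather than truncating), but for the present result both yield the same rate. Your verification that $\mathbb E\|\hat F_\eps\|_{H^\alpha}^2$ is uniformly bounded by integrating the tail is correct, as is the observation that $\lambda_\eps^2/\eps^2=\eps^{-2d/(2(\alpha+1)+d)}\to\infty$ makes $\P(A_M^c)$ sub-exponentially small.
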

	The rate in Theorem \ref{thm-div-f} is strictly better than what can be obtained from (\ref{devore-stab}), or by estimating $\|u_{\hat f_\eps}-u_{f_0}\|_{C^2}$ by $\|u_{\hat f_\eps}-u_{f_0}\|_{H^{2+d/2+\eta}}, \eta>0,$ and using Richter's stability estimate. A more detailed study of the stability problem, and of the related question of optimal rates for estimating $f$, is beyond the scope of the present paper and will be pursued elsewhere.

	\subsection{Schr\"odinger equation}\label{sec-schr}
	We now turn to the Schr\"odinger equation 
	\begin{equation}\label{schroedeq}
	\begin{cases}
	\Delta u -2fu =0 \quad \textnormal{on }  \mathcal O,\\
	u=g\quad \textnormal{on } \mathcal \partial O,
	\end{cases}
	\end{equation} 
	with \emph{given} $g\in C^\infty(\partial \mathcal O)$. By standard results for elliptic PDEs (Theorem 6.14 in \cite{gt}), for $f \in \mathcal F_{\alpha, K_{min}}$ from (\ref{Fdef}) with $K_{min}\geq 0, \alpha>d/2$,  a unique classical solution $u_f=G(f)$ to (\ref{intro-sch}) exists which lies in $C^{2+\eta}(\mathcal O)\cap C^0(\bar{\mathcal O})$ for some $\eta>0$.
	
	The results for this PDE are similar to the previous section, although the convergence rates are quantitatively different due to the fact that the forward operator is now $2$-smoothing.
	
	\begin{thm}[Prediction error]\label{thm-schr-pred}
		Let $\mathcal F$ be given by (\ref{Fdef}) for some integer $\alpha>(d/2+2) \vee (2d-2)$
		and $K_{min}\in[0,1)$. Let $G(f)=u_f$ denote the unique solution to (\ref{schroedeq}) and let $Y^{(\varepsilon)} \sim \P_{f_0}^\eps$ from (\ref{data2}) for some $f_0 \in \mathcal F$. Moreover, suppose that $\Phi:\mathbb R\to (K_{min}, \infty)$ is a regular link function and that $J_{\lambda_\eps,\eps}$ is given by (\ref{Jdef}), where
		$$\lambda_\varepsilon=\varepsilon^{\frac{2(\alpha+2)}{2(\alpha+2)+d}}.$$
		Then the following holds.
		\begin{enumerate}
			\item For each $f_0\in\mathcal F$ and $\varepsilon>0$, almost surely under $\P_{f_0}^\eps$, there exists a maximiser $\hat f_{\varepsilon} \in \mathcal F$ of $J_{\lambda_\eps,\eps}$ over $\mathcal F$.
			\item For each $R>0$, $r > K_{min}$, there exist finite constants $c_1,c_2>0$ such that for any maximiser $\hat f_\varepsilon \in \mathcal F$ of $J_{\lambda_\eps,\eps}$, all $0<\eps<1$ and all $M\geq c_1$, we have
			\begin{equation}\label{schr-conc}
			\sup_{f_0\in\mathcal F_{\alpha,r}(R)}\P_{f_0}^\eps\Big(\mu^2_{\lambda_\eps}(\hat f_\eps,f_0)\geq M^2\eps^{\frac{4(\alpha+2)}{2(\alpha+2)+d}}\Big)\leq \exp\Big(-\frac{M^2\lambda_\eps^2}{c_2\eps^2}\Big).
			\end{equation}
			\item For each $R>0$, $r > K_{min}$ and $\beta\in [0,\alpha+2]$, there exists a constant $c_3>0$ such that for any maximiser $\hat f_\varepsilon \in \mathcal F$ of $J_{\lambda_\eps,\eps}$ and all $0<\eps<1$,
			\begin{equation}\label{schr-rate}
			\sup_{f_0\in\mathcal F_{\alpha,r}(R)}\mathbb E^{\eps}_{f_0}\left\|u_{\hat f_\varepsilon}-u_{f_0}\right\|_{H^\beta}\leq C\varepsilon^{\frac{2(\alpha+2-\beta)}{2(\alpha+2)+d}}.
			\end{equation}
		\end{enumerate}
	\end{thm}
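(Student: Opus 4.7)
The plan is to apply Theorem~\ref{thm-gen} with $\mathbb H=L^2(\mathcal O)$, $\tilde{\mathcal V}=\mathcal V=H^\alpha_c(\mathcal O)$, forward map $\mathscr G=G\circ\Phi$, and regularity parameters $\kappa=2$, $\gamma=4$. The required condition $\alpha>(d/2-\kappa)\vee(\gamma d/2-\kappa)=(d/2-2)\vee(2d-2)$ is covered by the hypothesis; the additional strengthening to $\alpha>d/2+2$ is what is needed below to place $u_f$ in $C^2(\bar{\mathcal O})$ via Sobolev embedding. Since $H^\alpha_c(\mathcal O)$ is weakly closed in $H^\alpha(\mathcal O)$, Part~1 is immediate from Part~1 of Theorem~\ref{thm-gen} once $\mathscr G$ is shown to be $(2,4,\alpha)$-regular. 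Part~2 and the $\beta=0$ case of Part~3 then follow from (\ref{main-thm-tau}) and (\ref{main-thm-rate}) applied at $F_*=F_0=\Phi^{-1}\circ f_0$ with $\delta\asymp\varepsilon^{2(\alpha+2)/(2(\alpha+2)+d)}$, after a routine verification of (\ref{delta-cond}).

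The central step is verifying (\ref{entrcond}). Given $F,H\in H^\alpha_c(\mathcal O)$, set $f=\Phi\circ F$, $h=\Phi\circ H$; then $w:=u_f-u_h$ solves
\[
\Delta w-2fw=2(f-h)u_h \text{ on }\mathcal O,\qquad w=0 \text{ on }\partial\mathcal O.
\]
Because $f\ge K_{min}\ge 0$, the variational inverse of $\Delta-2f$ with zero boundary data is bounded from $(\tilde H^2(\mathcal O))^*$ into $L^2(\mathcal O)$, so $\|w\|_{L^2}\lesssim \|(f-h)u_h\|_{(\tilde H^2)^*}$. Duality together with the multiplier estimate $\|u_h\phi\|_{\tilde H^2}\lesssim \|u_h\|_{C^2(\bar{\mathcal O})}\|\phi\|_{\tilde H^2}$ (valid since $u_h\in C^2(\bar{\mathcal O})$ when $\alpha>d/2+2$) then gives $\|u_f-u_h\|_{L^2}\lesssim \|u_h\|_{C^2(\bar{\mathcal O})}\|f-h\|_{(H^2)^*}$. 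Elliptic regularity for (\ref{schroedeq}) combined with Sobolev embedding bounds $\|u_h\|_{C^2(\bar{\mathcal O})}$ by a polynomial in $\|h\|_{H^\alpha}$, and the link-function results in Section~\ref{sec-reg} give
\[
\|\Phi\circ F-\Phi\circ H\|_{(H^2)^*}\lesssim \bigl(1+\|F\|_{H^\alpha}\vee\|H\|_{H^\alpha}\bigr)^{c_1}\|F-H\|_{(H^2)^*}
\]
for some $c_1\ge 0$. A bookkeeping of the exponents in the two polynomial factors shows that $\gamma=4$ suffices to establish (\ref{entrcond}) with $\kappa=2$.

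For $\beta\in(0,\alpha+2]$ in Part~3, I interpolate between the $L^2$-rate just obtained and a uniform $H^{\alpha+2}$-bound. From (\ref{main-thm-tau}) the random variable $\lambda_\varepsilon^2\|\hat F\|_{H^\alpha}^2\le \tau_{\lambda_\varepsilon}^2(\hat F,F_0)$ has sub-Gaussian tails around its mean $\lesssim\lambda_\varepsilon^2$, so all polynomial moments of $\|\hat F\|_{H^\alpha}$ are bounded uniformly in $\varepsilon$; elliptic regularity for (\ref{schroedeq}) applied to $u_{\hat f_\varepsilon}$ and $u_{f_0}$ then yields $\mathbb E^\varepsilon_{f_0}\|u_{\hat f_\varepsilon}-u_{f_0}\|_{H^{\alpha+2}}\lesssim 1$. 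The interpolation inequality $\|v\|_{H^\beta}\lesssim \|v\|_{L^2}^{1-\beta/(\alpha+2)}\|v\|_{H^{\alpha+2}}^{\beta/(\alpha+2)}$ combined with H\"older's inequality in expectation produces the claimed rate (\ref{schr-rate}).

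The principal obstacle is securing \emph{polynomial} rather than exponential growth in $\|F\|_{H^\alpha}$ and $\|H\|_{H^\alpha}$ in (\ref{entrcond}): this fails for the naive choice $\Phi=\exp$ and is exactly why regularity of $\Phi$ in the sense of Definition~\ref{def-phi} enters essentially. The Schr\"odinger case is somewhat cleaner than the divergence-form case of Theorem~\ref{thm-div-pred}, because the elliptic operator $\Delta-2f$ depends only on $f$ (not on $\nabla f$), which is also the reason why the condition $2d-2$ here replaces the $2d-1$ arising there.
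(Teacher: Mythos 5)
Your proposal is correct and follows the same overall strategy as the paper: cast the Schr\"odinger forward map $\mathscr G=G\circ\Phi$ as $(\kappa,\gamma,\alpha)$-regular with $\kappa=2$, $\gamma=4$, via the PDE estimates of Section~\ref{sec-schr-facts}, the link-function bounds of Section~\ref{sec-reg}, and the Sobolev embedding $H^\alpha\hookrightarrow C^2$, and then invoke Theorem~\ref{thm-gen} followed by interpolation with the uniform $H^{\alpha+2}$-bound from Lemma~\ref{lem-schr-c2}. One small implementation difference: for Part~3 the paper bounds $\E_{f_0}^\eps\|u_{\hat f}-u_{f_0}\|_{H^\beta}$ by a dyadic peeling over the events $A_j=\{\mu_{\lambda_\eps}(\hat f,f_0)\in(2^{j-1}\delta_\eps,2^j\delta_\eps]\}$ paired with the concentration inequality, tracking the polynomial dependence of $\|u_{\hat f}\|_{H^{\alpha+2}}$ on $\hat\mu_{\lambda_\eps}/\lambda_\eps$, whereas you interpolate first and then split the expectation by H\"older using moment bounds derived from (\ref{main-thm-tau}); both routes use exactly the same two inputs (sub-Gaussian concentration of $\tau_{\lambda_\eps}(\hat F,F_0)$ and the elliptic $H^{\alpha+2}$-bound) and give the same rate, so this is an equivalent, slightly tidier presentation rather than a different argument. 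One caution on notation: Lemma~\ref{lem-schr-h2} gives $\|V_f\psi\|_{L^2}\lesssim(1+\|f\|_\infty)\|\psi\|_{(H^2_0)^*}$ with $H^2_0$ the zero-trace space, not the two-boundary-condition space $\tilde H^2=H^2_c$ of the paper's Section~1.1 — the duality argument only produces a test function $V_f[\varphi]\in H^2_0$, which does not have vanishing normal derivative, so the operator bound should be stated over $(H^2_0)^*$; since $(H^2_0)^*\hookrightarrow(H^2)^*$ this costs nothing, and the remaining steps (multiplier estimate, (\ref{phi-neg-sob})) go through unchanged, but the $f$-dependence $(1+\|f\|_\infty)$ of the constant should be written explicitly since it is exactly what produces the exponent $\gamma=4$ after the bookkeeping you allude to.
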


	For the PDE (\ref{schroedeq}) the stability estimate is easier to obtain than the one required in Theorem \ref{thm-div-f}, and here is the convergence rate for estimation of $f\in\mathcal F$. We note that the rates obtained in Theorems \ref{thm-schr-pred} and \ref{thm-schr-f} are minimax-optimal in view of Proposition 2 in \cite{n17} (and its proof).
	\begin{thm}\label{thm-schr-f}
		Assume that $\alpha,K_{min}, \mathcal F,G,\Phi,\lambda_\eps$ are as in Theorem \ref{thm-schr-pred} and that in addition, $\inf_{x\in\mathcal \partial O}g(x)\geq g_{min}$ for some $g_{\min}>0$. Let $\hat f_\varepsilon \in \mathcal F$ be any maximiser of $J_{\lambda_\varepsilon,\varepsilon}$. Then for all $r > K_{min}$ and $R>0$, there exists a constant $C>0$ such that for all $\eps>0$ small enough,
		\[\sup_{f_0\in\mathcal F_{\alpha,r}(R)}\E_{f_0}^\eps\left\|\hat f_\eps-f_0 \right\|_{L^2}\leq C\eps^{\frac{2\alpha}{2(\alpha+2)+d}}. \]
	\end{thm}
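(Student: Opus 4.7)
The plan is to reduce the $L^2$-rate for $\hat f_\eps$ to the prediction error bound of Theorem~\ref{thm-schr-pred} by establishing a deterministic Lipschitz stability estimate
\[
\|f_1-f_2\|_{L^2} \le C\, \|u_{f_1}-u_{f_2}\|_{H^2}, \qquad f_1,f_2 \in \mathcal F,
\]
with constants $C$ uniform over the class $\mathcal F_{\alpha,r}(R)$. Granted this, applying the estimate with $f_1=\hat f_\eps$ and $f_2=f_0$, taking $\E^\eps_{f_0}$ and invoking Theorem~\ref{thm-schr-pred} part 3 at the admissible choice $\beta=2$ (since $\alpha+2\ge 2$) yields
\[
\E^\eps_{f_0}\|\hat f_\eps - f_0\|_{L^2} \le C\, \E^\eps_{f_0}\|u_{\hat f_\eps}-u_{f_0}\|_{H^2} \lesssim \eps^{2\alpha/(2(\alpha+2)+d)},
\]
which is the claimed bound.

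To derive the stability estimate I would exploit that the Schrödinger equation~(\ref{schroedeq}) can be algebraically inverted wherever $u_f>0$: from $\Delta u_{f_i} = 2 f_i u_{f_i}$ for $i=1,2$, a one-line computation gives the pointwise identity
\[
f_1 - f_2 \;=\; \frac{1}{2u_{f_1}}\bigl[\Delta(u_{f_1}-u_{f_2}) - 2 f_2 (u_{f_1}-u_{f_2})\bigr].
\]
Taking $L^2$-norms then produces
\[
\|f_1-f_2\|_{L^2} \le \frac{1}{2\inf_{\bar{\mathcal O}}u_{f_1}}\bigl(\|\Delta(u_{f_1}-u_{f_2})\|_{L^2} + 2\|f_2\|_\infty\|u_{f_1}-u_{f_2}\|_{L^2}\bigr),
\]
so it only remains to control the prefactor and $\|f_2\|_\infty$ uniformly.

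For the lower bound on $u_{f_1} = u_{\hat f_\eps}$, I would apply the weak maximum principle for the operator $-\Delta + 2 f_1$: the function $v:=g_{min}-u_{\hat f_\eps}$ satisfies $(-\Delta+2\hat f_\eps)v = -2\hat f_\eps g_{min}\le 0$ in $\mathcal O$ and $v = g_{min}-g \le 0$ on $\partial\mathcal O$. Since the reaction coefficient $2\hat f_\eps$ is non-negative (as $\hat f_\eps > K_{min} \ge 0$ by membership in $\mathcal F$), the maximum principle forces $v \le 0$, hence $u_{\hat f_\eps} \ge g_{min}$ on $\bar{\mathcal O}$. The bound $\|f_0\|_\infty \lesssim R$ follows from the Sobolev embedding $H^\alpha \hookrightarrow L^\infty$ valid for $\alpha > d/2$, uniformly in $\mathcal F_{\alpha,r}(R)$. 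Combining these bounds closes the stability estimate and concludes the proof. The main (rather mild) technical point is verifying the uniform positivity $u_{\hat f_\eps} \ge g_{min}$; this crucially uses both $\hat f_\eps \ge 0$ (automatic from $\mathcal F$) and the hypothesis $g \ge g_{min}>0$, and is precisely what makes the Schrödinger case considerably simpler than the divergence-form analogue of Theorem~\ref{thm-div-f}, where one has to invoke a genuine $H^2\to L^2$ stability lemma for $u_f\mapsto f$.
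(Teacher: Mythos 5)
Your high-level plan — reduce the $L^2$-rate for $\hat f_\eps - f_0$ to the $H^2$ prediction rate at $\beta=2$ from Theorem~\ref{thm-schr-pred} via a stability estimate for $u_f\mapsto f$ — is exactly the paper's route (which invokes Lemma~\ref{lem-schr-stab} and argues as in Theorem~\ref{thm-div-f}). However, the core of your stability argument is incorrect.

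The maximum-principle step fails. With $v=g_{min}-u_{\hat f_\eps}$ and $\Delta u_{\hat f_\eps}=2\hat f_\eps u_{\hat f_\eps}$, one computes $\Delta v = -2\hat f_\eps u_{\hat f_\eps}$ and hence
\[
(-\Delta+2\hat f_\eps)v \;=\; 2\hat f_\eps u_{\hat f_\eps} + 2\hat f_\eps(g_{min}-u_{\hat f_\eps}) \;=\; 2\hat f_\eps g_{min}\;\ge 0,
\]
not $\le 0$ as you wrote. So $v$ is a \emph{super}solution, and the weak maximum principle gives a lower bound on $v$ (equivalently, the familiar \emph{upper} bound $u_{\hat f_\eps}\le \sup_{\partial\mathcal O}g$), not the desired conclusion $v\le 0$. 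More than a sign slip: the claim $\inf_{\mathcal O}u_f\ge g_{min}$ is simply false. For instance, in one dimension with $\mathcal O=(-1,1)$, $g\equiv1$, $f\equiv c>0$, the solution is $u(x)=\cosh(\sqrt{2c}\,x)/\cosh(\sqrt{2c})$, so $u(0)=1/\cosh(\sqrt{2c})<1=g_{min}$ and in fact $u(0)\to 0$ as $c\to\infty$. Positivity of $f$ makes $u$ subharmonic, which pushes its interior values \emph{below} the boundary values; the larger $f$, the more $u$ dips.

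The correct lower bound, proved in the paper via Jensen's inequality applied to the Feynman--Kac representation (\ref{schr-fk}), is $\inf_{\mathcal O}u_f\ge g_{min}e^{-c\|f\|_\infty}$, which depends exponentially on $\|f\|_\infty$. Consequently the prefactor $(\inf u_{\hat f_\eps})^{-1}$ in your algebraic inversion is \emph{not} uniformly bounded, and Lemma~\ref{lem-schr-stab} necessarily carries the factors $e^{c_2\|f_1\|_\infty}$, $e^{c_2\|f_1\vee f_2\|_\infty}$ and $\|u_{f_2}\|_{C^2}$. To close the proof one must therefore set $f_1=f_0$ (so that $e^{c_2\|f_0\|_\infty}$ is bounded by a constant depending on $R$) and $f_2=\hat f_\eps$, and then control the random factors $e^{c_2\|\hat f_\eps\|_\infty}$ and $\|u_{\hat f_\eps}\|_{C^2}\lesssim 1+\|\hat f_\eps\|_\infty$ by splitting the expectation over the events $A_j$ from (\ref{Aj}) and using the concentration bound (\ref{schr-conc}), exactly as in the proof of Theorem~\ref{thm-div-f}; the Gaussian tail $\exp(-2^{2j}\delta_\eps^2/(c_2^2\eps^2))$ dominates the exponential growth $e^{c\,2^j}$ since $\delta_\eps/\lambda_\eps$ is a constant and $\delta_\eps/\eps\to\infty$. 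Your claim that the Schrödinger case avoids the need for a genuine stability lemma is therefore mistaken — the exponential dependence of $\inf u_f$ on $\|f\|_\infty$ is the actual crux of the difficulty here, not a "mild technical point."
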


	\subsection{Concluding remarks and discussion}\label{sec-pde-rem}
	
	\begin{rem}\label{dethack} \normalfont
		The classical literature on `deterministic inverse problems' deals with convergence rate questions of Tikhonov and related regularisers, see the monograph  \cite{EHN96}, \cite{EKN89, N92, SEK93, EHN96, TJ02, KNS08} and also  \cite{BB18}. The convergence analysis conducted there is typically for observations $y_\delta = \mathscr G(F)+\delta$ where $\delta$ is a fixed perturbation vector in data space, equal to $L^2(\mathcal O)$ in the setting of the present paper.  For non-linear problems, rates are obtained as $\|\delta\|\to 0$ under some invertibility assumptions on a suitable adjoint $D\mathscr G^*_{F}$ of the linearisation $D\mathscr G_{F}[\cdot]$ of the forward operator at the `true' parameter $F$ (`source conditions'), see, e.g., Section 10 in \cite{EHN96}. These results are not directly comparable since our noise $\mathbb W$ models genuine statistical error and hence is random and, in particular, almost surely \textit{not} an element of data space $L^2(\mathcal O)$. As shown in \cite{BHM04, BHMR07, HP08, LL10}, the `deterministic' analysis extends to the Gaussian regression model (\ref{data}) to a certain degree, but the results obtained there still rely, among other things, on invertibility properties of $D \mathscr G^*_{F}$. For the PDE (\ref{intro-div}) such `source conditions' are problematic as $D \mathscr G^*_{F}$ is not invertible in general (due to the fact $\nabla u_{f}$ can vanish on $\mathcal O$ unless some fairly specific further assumptions are made, see \cite{itokunisch}). Our techniques circumvent source conditions by first optimally solving the `forward problem',  and then feeding this solution into a suitable stability estimate for $\mathscr G^{-1}$.
	\end{rem}
	
	\begin{rem} [Bayesian inversion] \label{bayeshack} \normalfont
		The Bayesian approach \cite{stuart10, KvdVvZ11, DS16} to inverse problems has been very popular recently, but only few theoretical guarantees for such algorithms are available in non-linear settings: In \cite{v13}, convergence rates for the  PDE (\ref{div}) are obtained for certain Bayes procedures that arise from priors for $f$ that concentrate on specific bounded subsets of $H^\alpha$. The main idea to combine regression results with stability estimates is related to our approach, but the rates obtained in \cite{v13} are suboptimal, and for the elliptic PDE models considered here do not apply to Gaussian priors. Bayesian inference for the PDE (\ref{schroedeq}) has been studied in \cite{n17}, where it is shown that procedures based on a uniform wavelet prior \textit{do} attain minimax optimal convergence rates for $f$ and $u_f$ (up to log-factors). The paper \cite{n17} also addresses the question of uncertainty quantification via the posterior distribution, by proving nonparametric Bernstein-von Mises theorems, whereas our results only concern the convergence rate of the MAP estimate for certain Gaussian priors (see Remarks \ref{lin}, \ref{maphack}). A related recent reference is \cite{MNP19} where the asymptotics for linear functionals of Gaussian MAP estimates  are obtained in linear inverse problems involving Radon and more general $X$-ray transforms -- see also \cite{KvdVvZ11, R13} for earlier results for diagonalisable linear inverse problems. Finally, convergence rates for posterior distributions of PDE coefficients in certain non-linear parabolic (diffusion) settings have been studied in \cite{NS17, A18}.
	\end{rem}
	
	\begin{rem} [MAP estimates, non-linear $\mathscr G$] \label{maphack} \normalfont As explained in Remark \ref{nonlin}, Theorem \ref{thm-gen} does not necessarily produce optimal rates for the choice $\lambda =\varepsilon$ in the non-linear settings from this section  where $\gamma>0$. For MAP estimates as discussed in Remark \ref{lin} our results then imply optimal convergence rates for $G(f)$ only if the Gaussian prior is re-scaled in an $\varepsilon$-dependent way, more specifically if its RKHS norm is $\bar \lambda \|\cdot\|_{H^\alpha}$ with $\bar \lambda= \varepsilon^{-d/(2\alpha+2\kappa+d)}$. Moreover, positivity of $f$ is enforced by a `regular link function' $\Phi$, excluding the exponential map. Whether these restrictions on admissible priors are artefacts of our proofs remains a challenging open question, however, to the best of our knowledge, these are the first convergence rate results for proper MAP-estimators in the non-linear PDE constrained inverse problems studied here, improving in particular upon the (`sub-sequential') consistency results  \cite{dashti13}. 
	\end{rem}

	\begin{rem}\label{bdhack} \normalfont
		For both PDEs (\ref{div}) and (\ref{schroedeq}), one can also consider estimation over the parameter space
		\begin{equation}\label{ftilde}
		\tilde{\mathcal F}:= \{f\in H^\alpha(\mathcal O): \inf_{x \in \mathcal O}f(x) >K_{min}\text{ on }\mathcal O\},
		\end{equation}
		\textit{without} the boundary restrictions on $f$ from (\ref{Fdef}). Note that $\mathcal F \subset \tilde {\mathcal F}$. Then, with $\kappa=1/2-\eta$,$\eta\in (0,1/2)$, $\tilde{\mathcal V}=\mathcal V=H^\alpha(\mathcal O)$ in (\ref{G2}), $\alpha>(d/2+2) \vee 2d-\kappa$ and $K_{min}$ as before, Theorem \ref{thm-gen} applies as in Theorems \ref{thm-div-pred} and \ref{thm-schr-pred}, and  
		\[\sup_{f_0 \in \tilde{\mathcal F}_{\alpha,r}(R)} \E_{f_0}^\eps\|u_{\hat f}-u_{f_0}\|_{H^\beta(\mathcal O)}\lesssim \eps^ {\frac{2(\alpha+1/2-\eta-\beta)}{2(\alpha+1/2-\eta)+d}},\quad r>K_{min}, R>0, \]
		where, respectively, $\beta\in [0,\alpha+1]$ (divergence form eq.) and $\beta\in [0,\alpha+2]$ (Schr\"odinger eq.), and $\tilde{\mathcal F}_{\alpha,r}(R):=\{f\in\tilde{\mathcal F}: f >r \text{ on }\mathcal O,~ \|f\|_{H^\alpha} \le R\}$. By the stability Lemmas \ref{lem-div-stab} and \ref{lem-schr-stab} (which apply to $\tilde {\mathcal F}$ as well) and arguing as in the proofs of Theorems \ref{thm-div-f} and \ref{thm-schr-f}, this yields the respective convergence rates 
		\[\eps^ {\frac{2(\alpha-3/2-\eta)}{2(\alpha+1/2-\eta)+d}\cdot\frac{\alpha-1}{\alpha+1}} \;\text{ (div. form eq.)}, \quad \eps^ {\frac{2(\alpha-3/2-\eta)}{2(\alpha+1/2-\eta)+d}\cdot\frac{\alpha}{\alpha+2}} \;\text{ (Schr\"odinger eq.)},\]
		for $\E_{f_0}^\eps\|\hat f-f_0\|_{L^2}$, uniform over $\tilde{\mathcal F}_{\alpha,r}(R)$.
	\end{rem}

	\section{Proofs of the main results}\label{sec-pfs}

	\subsection{Convergence rates in $M$-estimation}

For the convenience of the reader we recall here some classical techniques for proving convergence rates for $M$-estimators (see \cite{sara2001, saramest}) -- these will form the basis for the proof of Theorem \ref{thm-gen}. In the following $\tilde{\mathcal V}\subseteq L^2(\mathcal O)$, $\mathscr G:\tilde{\mathcal V}\to \mathbb H$ is a Borel-measurable map, and the functionals $\mathscr J_{\lambda, \eps}, \tau_\lambda^2(\cdot,\cdot)$ are given by (\ref{Jdef2}), (\ref{taudef}), respectively.  Let $\mathcal V\subseteq \tilde{\mathcal V}\cap H^\alpha(\mathcal O)$ be a subset over which we aim to maximise $\mathscr J_{\lambda, \eps}$. For any $F_*\in \mathcal V$ and $\lambda, R\geq 0$, define sets
	\begin{align}\label{V*}
	\mathcal V_*(\lambda,R)&:=\{F\in\mathcal V: \tau^2_{\lambda}(F,F_*)\leq R^2  \},
	\end{align}
	their images under $\mathscr G$,
	\begin{equation}\label{D*}
	\mathcal D_*(\lambda,R)= \big\{\mathscr G(F):F\in\mathcal V \text{ with } \tau^2_{\lambda}(F,F_*)\le R^2\big\},
	\end{equation}
	and also
	\begin{equation}\label{J*}
	J_*(\lambda,R):=R+\int_0^{2R} H^{1/2}\left(\rho,\mathcal D_*(\lambda,R),\|\cdot\|_{\mathbb H}\right)d\rho,
	\end{equation}
	where the usual metric entropy of $A \subset \mathbb H$ is denoted by $H(\rho,A,\|\cdot\|_{\mathbb H} )$ ($\rho>0$). The following theorem is, up to some modifications which adapt it to the continuum sampling scheme (\ref{data}) and the inverse problem setting considered here, a version of Theorem 2.1 in \cite{sara2001}.
	\begin{thm}\label{thm-ep}
		Let $F_*\in\mathcal V, \lambda >0,$ and let $\mathbb P^\eps_{F_0}$ be the law of $Y^{(\eps)}$ from (\ref{data}) for some fixed $F_0 \in \tilde{\mathcal V}$. Suppose $\Psi_*(\lambda, R)\geq J_*(\lambda,R)$ is some upper bound such that $R\mapsto \Psi_*(\lambda, R)/R^2$ is non-increasing. Then there exist universal constants $c_1, c_2, c_3$ such that for all $\eps, \lambda, \delta>0$ satisfying
		\begin{equation}\label{delta-req}
		\delta^2\geq c_1\eps\Psi_*(\lambda, \delta)
		\end{equation}
		and any $R\geq \delta$, we have that
		\begin{align}\label{tau-conc}
		\mathbb P^\eps_{F_0}\Big(\mathscr J_{\lambda,\eps} &\text{ has a maximizer } \hat F \text{ over }\mathcal V  \text{ s.t. }\tau_\lambda^2(\hat F,F_0)\geq 2(\tau_\lambda^2(F_*,F_0)+R^2)\Big) \notag \\
		&\leq c_2\exp\Big(-\frac{R^2}{c_1^2\eps^2}\Big).
		\end{align}
		Moreover, for any maximiser $\hat F$ of $\mathscr J_{\lambda,\eps}$ over $\mathcal V$ we have for some universal constant $c_3$
		\begin{equation}\label{tau-exp}
		\E_{F_0}^\eps[\tau_\lambda^2(\hat F,F_0)]\leq c_3(\tau_\lambda^2(F_*,F_0)+\delta^2+\eps^2).
		\end{equation}
	\end{thm}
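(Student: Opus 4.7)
The proof follows the classical $M$-estimation template of \cite{sara2001}---Basic Inequality, peeling, and Gaussian chaining---with the simplification that the noise here is an exact Gaussian process on $\mathbb H$ rather than a discrete empirical process. First, substituting $Y^{(\eps)} = \mathscr G(F_0) + \eps \mathbb W$ into \eqref{Jdef2} and completing the square yields
\begin{equation*}
\mathscr J_{\lambda,\eps}(F) = \|\mathscr G(F_0)\|_{\mathbb H}^2 - \tau_\lambda^2(F, F_0) + 2\eps\langle \mathbb W, \mathscr G(F)\rangle_{\mathbb H},
\end{equation*}
so the variational inequality $\mathscr J_{\lambda,\eps}(\hat F) \geq \mathscr J_{\lambda,\eps}(F_*)$ rearranges to the Basic Inequality
\begin{equation*}
\tau_\lambda^2(\hat F, F_0) \leq \tau_\lambda^2(F_*, F_0) + \nu_\eps(\hat F), \qquad \nu_\eps(F) := 2\eps\langle \mathbb W, \mathscr G(F) - \mathscr G(F_*)\rangle_{\mathbb H},
\end{equation*}
where $\nu_\eps$ is a centred Gaussian process on $\mathcal V$ with $\mathrm{Var}(\nu_\eps(F)) = 4\eps^2\|\mathscr G(F) - \mathscr G(F_*)\|_{\mathbb H}^2$.

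Next, I would establish \eqref{tau-conc} by peeling in $T := \tau_\lambda(\hat F, F_0)$. Writing $\tau := \tau_\lambda(F_*, F_0)$, the bad event $\{T^2 \geq 2(\tau^2 + R^2)\}$ forces $T \geq \sqrt{2}\,R$ and $\tau^2 \leq T^2/2$, so $\nu_\eps(\hat F) \geq T^2 - \tau^2 \geq T^2/2$. Moreover the one-sided bounds $\lambda\|\hat F\|_{H^\alpha} \leq T$ and $\|\mathscr G(\hat F) - \mathscr G(F_*)\|_{\mathbb H}^2 \leq 2T^2 + 2\tau^2 \leq 3T^2$ place $\hat F$ inside $\mathcal V_*(\lambda, 2T)$. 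Slicing the half-line $[\sqrt{2}\,R, \infty)$ into shells $s_j < T \leq s_{j+1}$ with $s_j := 2^j \sqrt{2}\,R$, each slice of the bad event is contained in $\{\sup_{F \in \mathcal V_*(\lambda, 2s_{j+1})} \nu_\eps(F) \geq s_j^2/2\}$. For this supremum, Dudley's entropy inequality gives $\mathbb E[\sup \nu_\eps] \leq c\eps J_*(\lambda, 2s_{j+1}) \leq c\eps \Psi_*(\lambda, 2s_{j+1})$, which by the monotonicity of $\Psi_*(\lambda, \cdot)/(\cdot)^2$, the inequality $2s_{j+1} \geq \delta$ and assumption \eqref{delta-req} is at most $s_j^2/4$ for $c_1$ chosen large enough. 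Borell's Gaussian concentration, with variance proxy $\sigma_j^2 \leq 16\eps^2 s_{j+1}^2$, then produces a bound of order $\exp(-c s_j^2/\eps^2)$ per shell, and summing the geometric series $\sum_{j\geq 0}\exp(-c\cdot 2^{2j} R^2/\eps^2)$ is dominated by its $j=0$ term, giving \eqref{tau-conc}.

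The expectation bound \eqref{tau-exp} follows by integrating the tail at threshold $2\tau^2 + 2\delta^2$: the below-threshold contribution is at most $2\tau^2+2\delta^2$, while the above-threshold contribution is bounded by $\int_{\delta^2}^\infty c_2\exp(-u/(c_1^2\eps^2))du = O(\eps^2)$. The main technical obstacle is the quasi-triangle step: since $\tau_\lambda(\cdot,\cdot)$ is asymmetric in its arguments (only the first's $H^\alpha$-norm enters), translating peeling on $T = \tau_\lambda(\hat F, F_0)$ into containment in the sets $\mathcal V_*(\lambda, \cdot)$ that define the entropy functional $J_*$ requires careful use of the one-sided inequalities $\lambda\|\hat F\|_{H^\alpha}\leq T$ and $\|\mathscr G(F_*)-\mathscr G(F_0)\|_{\mathbb H}\leq \tau$ together with the bad-event relation $\tau \leq T/\sqrt{2}$; once this is done, the Gaussian concentration machinery is standard.
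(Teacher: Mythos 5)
Your proposal is correct and follows essentially the same route as the paper: complete the square to obtain the basic inequality $\tau_\lambda^2(\hat F,F_0)-\tau_\lambda^2(F_*,F_0)\le 2\eps\langle\mathbb W,\mathscr G(\hat F)-\mathscr G(F_*)\rangle_{\mathbb H}$, peel into geometric shells, control each shell's expected supremum via Dudley's entropy integral and the monotonicity hypothesis on $\Psi_*(\lambda,\cdot)/(\cdot)^2$, and close with the Borell--Sudakov--Tsirelson inequality; the expectation bound then follows by integrating the tail, exactly as in the paper. The only (cosmetic) difference is the peeling variable: the paper first converts the bad event into lower bounds on $\tau_\lambda^2(\hat F,F_*)$ (using the elementary inequalities from van de Geer's text) and peels in that quantity, whereas you peel in $T=\tau_\lambda(\hat F,F_0)$ and then recover $\hat F\in\mathcal V_*(\lambda,2T)$ via the quasi-triangle step you correctly identify as the crux—both versions feed the same entropy functional $J_*$ and yield the same constants up to rescaling.
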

	
	\begin{proof}
		1. Let $\hat F$ denote any maximiser of $\mathscr J_{\lambda, \eps}$. By completing the square, we see that $\hat F$ also maximises
		\[Q_{\lambda,\eps}(F):=2\langle\varepsilon \mathbb W,\mathscr G( F)\rangle_{\mathbb H}-\|\mathscr G(F)-\mathscr G(F_0)\|_{\mathbb H}^2-\lambda^2\|F\|_{H^\alpha}^2.\]
		Rewriting the inequality $Q_{\lambda,\eps}(\hat F)\geq Q_{\lambda,\eps}(F_*)$, we obtain
		\[2\langle\varepsilon \mathbb W,\mathscr G( \hat F)-\mathscr G(F_*)\rangle_{\mathbb H}\geq \tau_\lambda^2(\hat F,F_0)- \tau_\lambda^2(F_*,F_0).\]
		Elementary calculations as in \cite{sara2001}, p.3-4, give that for all $R>0$, if
		\[\tau_\lambda^2(\hat F,F_0)\geq 2\left(\tau_\lambda^2(F_*,F_0)+R^2\right) \] holds 
		then we also have the inequalities
		\begin{equation*}
		\begin{split}
		\tau_\lambda^2(\hat F,F_*)&\geq R^2 \qquad \text{and}\\
		\tau_\lambda^2(\hat F,F_0)-\tau_\lambda^2(F_*,F_0)&\geq \frac 16 \tau_\lambda^2(\hat F,F_*).
		\end{split}
		\end{equation*}
		It follows that for any $R>0$ and for $\mathbb P$ the law of the centred Gaussian process $(\mathbb W(\psi)=\langle \mathbb W, \psi \rangle_{\mathbb H}: \psi \in \mathbb H)$,
		\begin{equation*}
		\begin{split}
		\P_{F_0}^\eps&\left(\tau_\lambda^2(\hat F,F_*)\geq 2\left(\tau_\lambda^2(F_*,F_0)+R^2\right)\right)\\
		&\le \P_{F_0}^\eps \Big(\tau_\lambda^2(\hat F,F_*)\geq R^2,~2\langle\varepsilon \mathbb W,\mathscr G( \hat F)-\mathscr G(F_*)\rangle_{\mathbb H}\geq \frac 16 \tau_\lambda^2(\hat F,F_*) \Big)\\
		&\leq \sum_{l=1}^{\infty}\P\left(\sup_{\psi\in \mathcal D_*(\lambda,2^lR)}\langle\varepsilon \mathbb W,\psi-\mathscr G(F_*)\rangle_{\mathbb H} \geq \frac{1}{48}2^{2l}R^2\right) =:\sum_{l=1}^\infty P_l.
		\end{split}
		\end{equation*}
		\par
		2. For all $\lambda, R\geq 0$, we have that $\sup_{\psi,\varphi\in\mathcal D_*(\lambda,R)}\|\psi-\varphi\|_{\mathbb H}\leq 2R,$
		so that by Dudley's theorem (see \cite{nicklgine}, p.43),
		\begin{equation*}
		\begin{split}
		&\E\left[\sup_{\psi\in\mathcal D_*(\lambda,R)}|\langle \W,\psi-\mathscr G(F_*)\rangle_
		\mathbb H|\right]\\
		&\qquad \lesssim \inf_{\psi\in \mathcal D_*(\lambda,R)}\E|\langle \W,\psi-\mathscr G(F_*)\rangle_\mathbb H|+\int_0^{2R}H^{1/2}\left(\rho,\mathcal D_*(\lambda,R),\|\cdot\|_{\mathbb H}\right)d\rho\\
		&\qquad \lesssim R+\int_0^{2R}H^{1/2}\left(\rho,\mathcal D_*(\lambda,R),\|\cdot\|_{\mathbb H}\right)d\rho= J_*(\lambda, R)\leq \Psi_*(\lambda,R).
		\end{split}
		\end{equation*}

		\par
		3. Let us write $S_*(\lambda,R):=\sup_{\psi\in\mathcal D_*(\lambda,R)}|\langle \W,\psi-\mathscr G(F_*)\rangle|$. By choosing $c$ large enough and $\delta$ such that (\ref{delta-req}) holds, we have that for all $R\geq \delta$, $\frac{1}{48}R^2-\eps\Psi_*(\lambda,R)\geq \frac{1}{96}R^2$. Thus by the preceding display, the Borell-Sudakov-Tsirelson inequality (see Theorem 2.5.8 in \cite{nicklgine}), and possibly making $c>0$ larger, we obtain for all $R\geq \delta$ and $l=1,2,...$ 
		\begin{equation}\label{pl-est}
		\begin{split}
		P_l&\leq \P\left( \eps S_*(\lambda,2^lR)-\eps \E[S_*(\lambda,2^lR)] \geq \frac{1}{48}2^{2l}R^2-\eps\Psi_*(\lambda, 2^lR)\right)\\
		&\leq \P\left( S_*(\lambda,2^lR)-\E[S_*(\lambda,2^lR)] \geq \frac{2^{2l}R^2}{96\eps}\right)\\
		&\leq \exp\left(-\frac 12\left(\frac{2^{2l}R^2}{96\eps}\right)^22^{-2l}R^{-2}\right) \leq \exp\left(-\frac{2^{2l}R^2}{c\eps^2}\right),
		\end{split}
		\end{equation}
		where in the penultimate inequality, we have used 
		$$\sup_{\psi\in\mathcal D_*(\lambda, 2^lR)}\E[|\langle\W,\psi-\mathscr G(F_*)\rangle_\mathbb H|^2]\leq 2^{2l}R^2.$$
		The inequality (\ref{tau-conc}) now follows from summing (\ref{pl-est}), and (\ref{tau-exp}) follows from arguing as in the proof of Lemma 2.2 in \cite{sara2001}.
	\end{proof}
	
	\subsection{Proof of \ref{thm-gen}, Part 2}\label{sec-pfs-sec2}
	We will apply Theorem \ref{thm-ep} and need the following lemma. For  $F_* \in \mathcal V$, define $\mathcal V_*(\lambda, R),\mathcal D_*(\lambda, R)$ and $J_*(\lambda,R)$ by (\ref{V*}), (\ref{D*}) and (\ref{J*}) respectively. We also use the notation $H^\alpha(\mathcal O,r):=\{F\in H^\alpha(\mathcal O)\;|\; \|F\|_{H^\alpha}\leq r\}$ and $H^\alpha_c(\mathcal O,r):=\{F\in H^\alpha_c(\mathcal O)\;|\; \|F\|_{H^\alpha}\leq r\}, r >0$ and recall $s=(\alpha+\kappa)/d$.

	\begin{lem}\label{lem-main}
		Suppose that $\mathcal V$ and $\mathscr G$ are as in Part 2 of Theorem \ref{thm-gen}. Then there exists a positive constant $c$ such that for all $\lambda, R>0$ and $F_*\in \mathcal V$,
		\begin{equation*}
		\begin{split}
		\Psi_*(\lambda,R)&:=R+c\big(R \lambda^{-\frac{1}{2s}}\big(1+(R/\lambda)^{\gamma/2s}\big)\big) 
		\end{split}
		\end{equation*}
		is an upper bound for $J_*(\lambda,R)$.
	\end{lem}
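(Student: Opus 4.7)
The proof reduces to bounding the Dudley-type entropy integral defining $J_*(\lambda, R)$. The first observation is that any $F \in \mathcal V_*(\lambda, R)$ satisfies $\lambda^2 \|F\|_{H^\alpha}^2 \le \tau_\lambda^2(F, F_*) \le R^2$ by (\ref{taudef}), so $\mathcal V_*(\lambda, R)$ is contained in the $H^\alpha$-ball of radius $R/\lambda$ inside $\mathcal H$. This is the only use of the penalty term and it converts the abstract level set into a Sobolev ball.

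The second step transports entropy from parameter space to data space via (\ref{entrcond}). For any $F, H \in \mathcal V_*(\lambda, R)$,
\[\|\mathscr G(F) - \mathscr G(H)\|_{\mathbb H} \le L \, \|F - H\|_{(H^\kappa)^*}, \qquad L := C\bigl(1 + (R/\lambda)^\gamma\bigr),\]
so every $(\rho/L)$-cover of $\{F \in \mathcal H : \|F\|_{H^\alpha} \le R/\lambda\}$ in the $(H^\kappa)^*$-norm induces a $\rho$-cover of $\mathcal D_*(\lambda, R)$ in $\|\cdot\|_{\mathbb H}$. Combining this with the standard Sobolev entropy bound
\[H\bigl(\delta, \{F \in \mathcal H : \|F\|_{H^\alpha} \le r\}, \|\cdot\|_{(H^\kappa)^*}\bigr) \lesssim (r/\delta)^{1/s}, \quad s = (\alpha + \kappa)/d,\]
valid thanks to the embedding $\mathcal H \hookrightarrow (H^\kappa(\mathcal O))^*$ and the assumption $\alpha + \kappa > d/2$, yields
\[H\bigl(\rho, \mathcal D_*(\lambda, R), \|\cdot\|_{\mathbb H}\bigr) \lesssim \bigl(L R/(\lambda \rho)\bigr)^{1/s}.\]

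For the third step I integrate. The hypothesis $\alpha > d/2 - \kappa$ forces $1/(2s) < 1$, so $\int_0^{2R} \rho^{-1/(2s)} d\rho \lesssim R^{1 - 1/(2s)}$, and hence
\[\int_0^{2R} H^{1/2}\bigl(\rho, \mathcal D_*(\lambda, R), \|\cdot\|_{\mathbb H}\bigr) d\rho \lesssim (L R/\lambda)^{1/(2s)} R^{1-1/(2s)} = R \lambda^{-1/(2s)} \bigl(1 + (R/\lambda)^\gamma\bigr)^{1/(2s)}.\]
The elementary inequality $(1 + y^\gamma)^{1/(2s)} \le 2^{1/(2s)}\bigl(1 + y^{\gamma/(2s)}\bigr)$ applied with $y = R/\lambda$ converts this into the form appearing in $\Psi_*(\lambda, R)$, and adding back the explicit $R$ from the definition of $J_*$ gives the claimed bound.

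The main delicate point is the Sobolev entropy estimate in step two: the case distinction $\mathcal H = H^\alpha_c(\mathcal O)$ for $\kappa \ge 1/2$ in the definition of $(\kappa,\gamma,\alpha)$-regularity is exactly what is needed so that the duality pairing with $H^\kappa(\mathcal O)$ behaves properly at $\partial \mathcal O$ (otherwise non-vanishing traces of $H^\kappa$ test functions spoil the identification with a negative-order Sobolev norm). Once $\mathcal H$ is chosen this way, the entropy bound with exponent $d/(\alpha+\kappa)$ is classical and can be obtained from wavelet or spectral decompositions of the embedding $\mathcal H \hookrightarrow (H^\kappa(\mathcal O))^*$.
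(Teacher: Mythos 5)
Your proposal is correct and follows essentially the same route as the paper: containment of the level set $\mathcal V_*(\lambda,R)$ in the $H^\alpha$-ball of radius $R/\lambda$, transport of covers through the modulus of continuity~(\ref{entrcond}) with Lipschitz factor $\sim 1+(R/\lambda)^\gamma$, the Sobolev entropy estimate with exponent $d/(\alpha+\kappa)$, integration of the Dudley integral, and the elementary subadditivity of $y\mapsto y^{1/(2s)}$. The one place where the paper supplies details that your write-up only gestures at is the entropy bound of $H^\alpha$-balls in the $\|\cdot\|_{(H^\kappa(\mathcal O))^*}$-norm: for $\kappa\geq 1/2$ the paper extends functions in $H^\alpha_c(\mathcal O)$ by zero to a larger smooth domain $\tilde{\mathcal O}$, shows $\|\phi\|_{(H^\kappa(\mathcal O))^*}\lesssim\|\tilde\phi\|_{H^{-\kappa}(\tilde{\mathcal O})}$ via a bounded extension operator for $H^\kappa$, and then invokes Theorem 4.10.3 of \cite{T78} on $\tilde{\mathcal O}$ (with the $\kappa<1/2$ case handled directly using $(H^\kappa)^*=H^{-\kappa}$); you correctly flag this as the delicate point but defer to "classical" wavelet or spectral arguments rather than spelling it out. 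One small inaccuracy: the hypothesis $\alpha+\kappa>d/2$ is not what makes the entropy estimate hold (that only needs $\alpha+\kappa>0$), it is what makes the integral $\int_0^{2R}\rho^{-1/(2s)}\,d\rho$ converge.
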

	\begin{proof}
		Let us first assume that $\kappa \geq 1/2$. We estimate the metric entropy in $J_*(\lambda,R)$. Let $\rho, \lambda, R>0$ and define 
		\begin{equation*}\label{mk}
		m:=C\left(1+R^\gamma\lambda^{-\gamma}\right),
		\end{equation*}
		where $C$ is the constant from (\ref{entrcond}). By definition of $\tau_\lambda$, we have $\mathcal V_*(\lambda,R)\subseteq H^\alpha_c(\mathcal O,R/\lambda)$. Fix some larger, bounded $C^\infty$-domain $\tilde{\mathcal O}\supset \bar{\mathcal O}$ and some function $\zeta\in C_c^\infty(\R^d)$ such that $0\leq \zeta\leq 1$, $\zeta=1$ on $\mathcal O$ and $\text{supp}(\zeta)\subset \tilde{\mathcal O}$. By the main theorem of Section 4.2.2 in \cite{T78}, there exists a bounded, linear extension operator $\mathcal E:H^{\kappa}(\mathcal O)\to H^\kappa(\R^d)$. Define the map $e:\phi\mapsto \zeta \mathcal E(\phi)$ which maps $H^\kappa(\mathcal O)$ continuously into $\tilde H^\kappa(\tilde {\mathcal O})$, and for $\phi\in L^2(\mathcal O)$, let $\tilde \phi:\R^d\to \R$ denote its extension by $0$ on $\R^d\setminus \mathcal O$. We then have, for some $c_1>0$,
		\begin{equation}\label{neg-sob}
		\|\phi\|_{(H^\kappa(\mathcal O))^*}=\sup_{\varphi\in H^\kappa(\mathcal O,1)}\left|\int_{\mathcal O}\phi\varphi \right|= \sup_{\varphi\in  H^\kappa(\mathcal O,1)}\left|\int_{\tilde{\mathcal O}}\tilde{\phi}e(\varphi) \right|\le c_1\|\tilde{\phi}\|_{H^{-\kappa}(\tilde{\mathcal O})}.
		\end{equation}
		By Theorem 11.4 in \cite{lionsmagenes} and its proof, the zero extension $\phi\mapsto \tilde \phi$ is continuous from $H^\alpha_c(\mathcal O)$ to $H^\alpha(\tilde{\mathcal O})$ with norm $1$, so that 
		\begin{equation*}
		\mathcal W:=\left\{\tilde F: F\in \mathcal V_*(R,\lambda) \right\}\subseteq H^\alpha_c(\tilde{\mathcal O},R/\lambda).
		\end{equation*}
		By Theorem 4.10.3 of \cite{T78}, we can pick $\tilde F_1,...,\tilde F_N\in \mathcal W$ with
		\[N\leq \exp\Big(c_2\big(\frac{Rmc_1}{\lambda\rho}\big)^{\frac 1s}\Big)\]
		for some universal constant $c_2$, such that the balls 
		\[\tilde B_i:=\Big\{\psi\in \mathcal W\;: \; \|\psi-\tilde F_i\|_{H^{-\kappa}(\tilde {\mathcal O})} \leq \frac{\rho}{mc_1}\Big\}, \qquad i=1,...,N,\]
		form a covering of $\mathcal W$. Then it follows from (\ref{entrcond})  and $(\ref{neg-sob})$ that for all $i=1,...,N$ and $F$ with $\tilde F\in \tilde B_i$,
		\[\|\mathscr G(F)-\mathscr G(F_i)\|_{\H}\leq m\|F-F_i\|_{(H^{\kappa}(\mathcal O))^*}\leq mc_1\|\tilde F-\tilde F_i\|_{H^{-\kappa}(\tilde{\mathcal O})}, \]
		whence the balls 
		\[B_i' :=\{\psi\in \mathcal D_*(\lambda,R): \|\psi-\mathscr G(F_i)\|_{\mathbb H}\leq \rho \},\quad i=1,...,N\]
		form a covering of $\mathcal D_*(\lambda,R)$. Hence we obtain the bound
		\begin{equation}\label{entrbound}
		H\left(\rho, \mathcal D_*(\lambda, R),\|\cdot\|_{\mathbb H}\right)\lesssim \Big(\frac{Rm}{\lambda\rho}\Big)^{\frac 1s}, 
		\end{equation}
		and hence also
		\begin{equation*}
		\begin{split}
		\int_0^{2R} H^{1/2}\left(\rho, \mathcal D_*(\lambda, R),\|\cdot\|_{\mathbb H}\right)d\rho &\lesssim \int_0^{2R}\left(\frac{Rm}{\lambda\rho}\right)^{\frac 1{2s}}d\rho \lesssim R\lambda^{-\frac{1}{2s}}(1+(R/\lambda)^{\frac{\gamma}{2s}}),
		\end{split}
		\end{equation*}
		which proves that $\Psi_*\geq J_*$ for the case $\kappa\geq 1/2$.
		\par
		For $\kappa<1/2$, by Theorem 11.1 in \cite{lionsmagenes}, we have $\tilde H^\kappa(\mathcal O)=H^\kappa_c(\mathcal O)=H^\kappa(\mathcal O)$ and hence $\|\cdot\|_{(H^\kappa(\mathcal O))^*}=\|\cdot\|_{H^{-\kappa}(\mathcal O)}$, whence we can use Theorem 4.10.3 of \cite{T78} directly to cover $\mathcal V_*(R,\lambda)\subseteq H^\alpha(\mathcal O,R/\lambda)$ by $H^{-\kappa}(\mathcal O)$-balls, and using (\ref{entrcond})  as above yields the entropy bound (\ref{entrbound}).
	\end{proof}
	
	By assumption on $\alpha$ we have $1+\frac{\gamma}{2s}< 2$ and hence the map $R\mapsto\Psi_*(\lambda,R)/R^2$, for $\Psi_*(\lambda,R)$ as defined in Lemma \ref{lem-main}, is decreasing. The bounds (\ref{main-thm-tau}) and (\ref{main-thm-rate}) then follow from Theorem \ref{thm-ep}. The proof of existence of maximisers is given in Section \ref{sec-ex-pf}.  Finally, we obtain Corollary \ref{cor-gen} by taking $F_*=F_0$ and $\delta:=c\eps^{2(\alpha+\kappa)/(2\alpha+2\kappa+d)}$ for which (\ref{delta-cond}) is easily verified for $\lambda$ chosen as in the corollary, so that Theorem \ref{thm-gen} applies.

	\subsection{Proof of Theorems \ref{thm-div-pred}, \ref{thm-div-lb} and \ref{thm-div-f}}\label{sec-div-pfs}

	\begin{proof}[Proof of Theorem \ref{thm-div-pred}] 
		We  verify that $\mathscr G$ given by (\ref{G2}) with $G$ the solution map of (\ref{div}), satisfies (\ref{entrcond}) for $\mathcal V = H^\alpha_c, \mathbb H = L^2(\mathcal O)$, $\gamma =4, \kappa=1$, in order to apply Theorem \ref{thm-gen}. Let $F,H\in H^\alpha$, and let us write $f:=\Phi\circ F$, $h:=\Phi\circ H$. With $L_f, V_f$ introduced in Section \ref{sec-div-facts} we have by (\ref{G2}) and (\ref{div})
		\begin{equation} \label{obpertu}
		\begin{split}
		&L_f[\mathscr G(F)-\mathscr G(H)]=L_f[u_f-u_h]\\
		&\quad =L_f[u_f]-L_h[u_h]+(L_h-L_f)[u_h]=\nabla\cdot ((h-f)\nabla u_h),
		\end{split}
		\end{equation}
		and then, by Lemma \ref{lem-div-h2} with $H^2_0$ defined in (\ref{h0}), the estimate
		\begin{equation}\label{diventr1}
		\begin{split}
		\|\mathscr G(F)-\mathscr G(H)\|_{L^2}&=\left\|V_f\left[\nabla\cdot ((h-f)\nabla u_h)\right]\right\|_{L^2}\\
		&\leq C(1+\|f\|_{C^1})\left\|\nabla\cdot ((h-f)\nabla u_h)\right\|_{(H^2_0)^*}.
		\end{split}
		\end{equation}
		By applying the divergence theorem to the vector field $\varphi (h-f)\nabla u_h$, where $\varphi\in C^2_0$ is any $C^2$-function that vanishes at the boundary, we have
		\begin{equation*}
		\begin{split}
		\left\|\nabla\cdot ((h-f)\nabla u_h)\right\|_{(H^2_0)^*}&=\sup_{\varphi\in C_0^2,\; \|\varphi\|_{H^2\leq 1}}\left|\int_{\mathcal O}\varphi \nabla\cdot ((h-f)\nabla u_h) \right|\\
		&=\sup_{\varphi\in C_0^2,\; \|\varphi\|_{H^2\leq 1}}\left|\int_{\mathcal O}(h-f)\nabla \varphi \cdot \nabla u_h \right|\\
		&\leq \|h-f\|_{(H^1)^*}\sup_{\varphi\in C_0^2,\; \|\varphi\|_{H^2\leq 1}}\|\nabla \varphi \cdot \nabla u_h\|_{H^1}\\
		&\lesssim \|h-f\|_{(H^1)^*}\|u_h\|_{\mathcal C^2},
		\end{split}
		\end{equation*}
		where we used the multiplicative inequality (\ref{c-h-mult}) in the last step. Combining this with (\ref{diventr1}) and Lemma \ref{lem-div-c} yields that
		\begin{equation*}
		\begin{split}
		\|\mathscr G(F)-\mathscr G(H)\|_{L^2}&\lesssim (1+\|f\|_{C^1})(1+\|h\|_{C^1}^2)\|h-f\|_{(H^1)^*}. 
		\end{split}
		\end{equation*}
		Hence, by (\ref{phi-cm}), (\ref{phi-neg-sob}) and the Sobolev embedding (\ref{h-emb}), we obtain
		\begin{equation*}
		\begin{split}
		\|\mathscr G(F)-\mathscr G(H)\|_{L^2}\lesssim (1+\|F\|_{H^\alpha}^4\vee\|H\|_{H^\alpha}^4)\|F-H\|_{(H^1)^*},
		\end{split}
		\end{equation*}
		so $\mathscr G$ indeed fulfills (\ref{entrcond}) for $\gamma=4$ and $\kappa=1$.
		\par
		The existence of maximisers $\hat f_\eps$ now follows from the first part of Theorem \ref{thm-gen}, and we prove (\ref{div-conc}) by applying Theorem \ref{thm-gen} with $F_*=F_0$. First, we note that for all $\hat f_\eps$ and $f_0$,
		\begin{equation}\label{mu-tau}
		\mu_{\lambda}(\hat f_\eps,f_0)= \tau_{\lambda}(\hat F_\eps, F_0).
		\end{equation}
		For the choice $\delta_\eps=c\eps^{\frac{2(\alpha+1)}{2(\alpha+1)+d}}$ and $c$ large enough, the triple $(\eps,\lambda_\eps,\delta_\eps)$ satisfies (\ref{delta-cond}) and Theorem \ref{thm-gen} and (\ref{mu-tau}) yield that for some $c'>0$ and any $m\geq \delta_\eps$,
		\[\P_{f_0}^\eps\left(\mu^2_{\lambda_{\eps}}(\hat f_\eps,f_0)\geq 2(\delta_\eps^2+m^2) \right)\leq \exp\left(-\frac{m^2}{c'\eps^2}\right), \]
		which proves (\ref{div-conc}).
		\par
		To show (\ref{finprat}), let now $\beta\in[0,\alpha+1]$, $R>0$ and $r > K_{min}$. By Lemma \ref{lem-div-bd}, we have that 
		\[M:=\sup_{f\in\mathcal F:\|f\|_{H^\alpha}\leq R}\|u_f\|_{H^{\alpha+1}}<\infty. \]
		Now for any $f_0\in\mathcal F_{\alpha,r}(R)$, we can use (\ref{h-int}) to estimate
		\begin{equation}\label{est-1}
		\begin{split}
		\|u_{\hat f}-u_{f_0}\|_{H^\beta}&\lesssim\|u_{\hat f}-u_{f_0}\|_{L^2}^{\frac{\alpha+1-\beta}{\alpha+1}}\|u_{\hat f}-u_{f_0}\|_{H^{\alpha+1}}^{\frac{\beta}{\alpha+1}}\\
		&\lesssim \|u_{\hat f}-u_{f_0}\|_{L^2}^{\frac{\alpha+1-\beta}{\alpha+1}}\left(M^{\frac{\beta}{\alpha+1}}+\|u_{\hat f}\|_{H^{\alpha+1}}^{\frac{\beta}{\alpha+1}}\right).
		\end{split}
		\end{equation}
		Further, Lemma \ref{lem-div-bd} and (\ref{phi-sob}) yield that 
		\begin{equation}\label{est-2}
		\|u_{\hat f}\|_{H^{\alpha+1}}^{\frac{\beta}{\alpha+1}}\lesssim 1+\|\hat f\|_{H^\alpha}^{\alpha\beta}\lesssim 1+\|\hat F\|_{H^\alpha}^{\alpha^2\beta}\lesssim 1+\left(\lambda_{\eps}^{-1}\mu_{\lambda_\eps}(\hat f,f_0)\right)^{\alpha^2\beta}. 
		\end{equation}
		Now set $\delta_\eps:=c_1\eps^{\frac{2(\alpha+1)}{2(\alpha+1)+d}}$ for $c_1$ from the second part of the theorem. We define the events 
		\begin{equation}\label{Aj}
		\begin{cases}
		A_0:=\{\mu_{\lambda_\eps}(\hat f_\eps,f_0)< \delta_\eps\}\\
		A_j:=\{\mu_{\lambda_\eps}(\hat f_\eps,f_0)\in (2^{j-1}\delta_\eps,2^j\delta_\eps]\}, \quad j\geq 1 .
		\end{cases}
		\end{equation}
		By (\ref{div-conc}) and (\ref{est-1})-(\ref{est-2}), and writing $\hat \mu_{\lambda_\eps}:=\mu_{\lambda_\eps}(\hat f_\eps,f_0)$, we then obtain
		\begin{equation}\label{est-3}
		\begin{split}
		&\E_{F_0}^\eps\left[\|u_{\hat f}-u_{f_0}\|_{H^\beta} \right]\lesssim \sum_{j=0}^\infty\E_{F_0}^\eps\left[ 1_{A_j}\|u_{\hat f}-u_{f_0}\|_{L^2}^{\frac{\alpha+1-\beta}{\alpha+1}}\left(1+\lambda_{\eps}^{-\alpha^2\beta}\hat \mu_{\lambda_\eps}^{\alpha^2\beta}\right) \right]\\
		&\lesssim \delta_\eps^{\frac{\alpha+1-\beta}{\alpha+1}}+\sum_{j=1}^\infty  (2^{j}\delta_\eps)^{\frac{\alpha+1-\beta}{\alpha+1}}\left(1+\lambda_{\eps}^{-\alpha^2\beta}(2^{j}\delta_\eps)^{\alpha^2\beta}\right)\P_{f_0}^\eps\left(A_j\right)\\
		&\lesssim \delta_\eps^{\frac{\alpha+1-\beta}{\alpha+1}}\Big(1+\sum_{j=1}^\infty 2^{\frac{j(\alpha+1-\beta)}{\alpha+1}}\big(1+(c2^j)^{\alpha^2\beta}\big)\exp \big(-\frac{2^{2j}\delta_\eps^2}{c_2^2\eps^2}\big) \Big) \\
		&\lesssim \delta_\eps^{\frac{\alpha+1-\beta}{\alpha+1}}(1+o(\eps)),
		\end{split}
		\end{equation}
		where $c_2$ is the constant from (\ref{div-conc}). The theorem is proved.
	\end{proof}

	\begin{proof}[Proof of Theorem \ref{thm-div-f}]
		We apply Lemma \ref{lem-div-stab} with $f_2=\hat f $ and $f_1 = f_0 \in \mathcal F_{\alpha,r}(R)$, so that $\|u_{f_1}\|_{C^1} \vee \|f_1\|_{C^1}$ is bounded by some fixed $B=B(R)$ (cf.~(\ref{h-emb}) and Lemma \ref{lem-div-c}). Thus, writing $\hat F_\eps:=\Phi^{-1}\circ \hat f_\eps$ and using (\ref{phi-cm}),
		\begin{equation*}
		\begin{split}
		\E_{f_0}^\eps\|\hat f_\eps-f_0\|_{L^2}&\lesssim \E_{f_0}^\eps\left[\|u_{\hat f_\eps}-u_{f_0}\|_{H^2} \|\hat f_\eps\|_{C_1}\right]\\
		&\lesssim \E_{f_0}^\eps\left[\|u_{\hat f_\eps}-u_{f_0}\|_{L^2}^{\frac{(\alpha-1)}{\alpha+1}}\|u_{\hat f_\eps}-u_{f_0}\|_{H^{\alpha+1}}^{\frac{2}{\alpha+1}} (1+\|\hat F_\eps\|_{C_1})\right].
		\end{split}
		\end{equation*}
		We now choose $\delta_\eps:=c_1\eps^{\frac{2(\alpha+1)}{2(\alpha+1)+d}}$ where $c_1$ is the constant from the second part of Theorem \ref{thm-div-pred}.  Bounding $\|u_{\hat f_\eps}-u_{f_0}\|_{H^{\alpha+1}}$ as in (\ref{est-1})-(\ref{est-2}), splitting the expectation into $A_j$, $j\geq 0$ as defined in (\ref{Aj}) and using the concentration inequality (\ref{div-conc}), we obtain as in (\ref{est-3}) the desired inequality
		\begin{equation*}
		\begin{split}
		\E_{f_0}^\eps\|\hat f_\eps-f_0\|_{L^2}\lesssim \delta_\varepsilon^{\frac{\alpha-1}{\alpha+1}}(1+o(\eps)).
		\end{split}
		\end{equation*}
		
	\end{proof}

	\begin{proof}[Proof of Theorem \ref{thm-div-lb}] We only prove the more difficult case $d \ge 2$.
		\par
		1. Let $f_0=1$. By direct computation, one verifies that the unique classical solution to (\ref{div}) with $g=1, \mathcal O=D$ is 
		$$u_{f_0}(x)=\frac{1}{2d}\left(\|x\|^2-1\right),\qquad \nabla u_{f_0}(x)=\frac{x}{d}.$$
		Thus we have that for some $1/2<a<b<1$,
		\begin{equation*}
		[a,b]^d\subset D,\qquad \frac 1 {2d}\leq \partial_{x_i}u_{f_0}(x)\leq \frac 1d\quad \textnormal{for all} \;\; i=1,...,d\;\;\textnormal{and} \;\; x\in [a,b]^d.
		\end{equation*}
		\par
		2. Now let $\Psi:\mathbb R\to\mathbb R$ be a $1$-dimensional, compactly supported, at least $(\alpha+1)$-regular Daubechies wavelet (see \cite{nicklgine}, Theorem 4.2.10). Then, for all integers $j\geq 1$, for suitable constants $n_j$, $c>1$ and shift vectors $v^{j,r}=(v^{j,r}_1,...,v^{j,r}_d)$ to be chosen later, we define the tensor wavelets $\Psi_{j,r}$, $r=1,...,n_j$ by
		\[\Psi_{j,r}(x)=2^{\frac{jd}{2}}c^{-\frac{d-1}{2}}\Psi(2^jx_1+v^{j,r}_1)\prod_{i=2}^d\Psi\left(\frac{2^j}cx_i+v^{j,r}_i\right).\]
		Note that the $\Psi_{j,r}$ are `steeper' by a fixed constant $c$ in $x_1$-direction than in any other direction. Due to the compact support of $\Psi$, there exists a constant $c_0$ which depends only on $c$ and $\Psi$ such that for all $j\geq j_0$ large enough, we can set $n_j=c_02^{jd}$ and find suitable vectors $v^{j,r}$ such that all $\Psi_{j,r}$ are supported in the interior $[a,b]^d$ with disjoint support. For some sufficiently small constant $\kappa>0$, we define
		\begin{equation}\label{fm}
		f_m:=f_0+\kappa 2^{-j(\alpha+d/2)}\sum_{r=1}^{n_j}\beta_{r,m}\Psi_{j,r},\qquad m=1,...,M,
		\end{equation}
		where $\beta_{r,m},\;m=1,...,M$ will be chosen later as a suitably separated elements of the hypercube $\beta_r\in\{-1,1\}^{n_j}$. 
		\par
		3. We choose $\kappa$ small enough (independently of $c>1$), as follows. By the wavelet characterisation of Sobolev norms, all $f_m$ of the form (\ref{fm}) lie in a fixed $H^\alpha$-ball of radius $C\kappa$, for some universal constant $C>0$, in particular $\|f_m-f_0\|_\infty$ can be made as small as desired for $\kappa$ small enough, so that all the $f_m>K_{min}$. Arguing as in (\ref{obpertu}), using $L_{f_0}=\Delta$ (the standard Laplacian), (\ref{c2-isom}), the multiplicative inequality (\ref{c-mult}), Lemma \ref{lem-div-c} and the Sobolev embedding $H^\alpha\subseteq C^{1+\eta}$ (for some small $\eta>0$), we have (uniformly for all $f_m$)
		\begin{equation*}
		\begin{split}
		\|u_{f_m}-u_{f_0}\|_{\mathcal C^2}&=\|V_{f_0}\left[\nabla\cdot \left((f_m-f_0)\nabla u_{f_m}\right)\right]\|_{\mathcal C^2}\\
		&\lesssim \|\nabla\cdot \left((f_m-f_0)\nabla u_{f_m}\right)\|_{\mathcal C^0}\\
		&\lesssim \|(f_m-f_0)\nabla u_{f_m}\|_{\mathcal C^1}\\
		&\lesssim  \|f_m-f_0\|_{\mathcal C^1}\|u_{f_m}\|_{\mathcal C^{2}}\\
		&\lesssim \|f_m-f_0\|_{H^\alpha}(1+\|f_m\|_{\mathcal C^1}^2).
		\end{split}
		\end{equation*}
		Therefore, $\sup_m \|u_{f_m}\|_{\mathcal C^2}<\infty$ and we can pick $\kappa$ so small that for all $f_m$ of the form (\ref{fm}),
		\begin{equation}\label{derivest}
		\frac 1 {4d}\leq \partial_{x_i}u_{f_m}(x)\leq \frac 2d\quad \textnormal{for all} \;\; i=1,...,d\;\;\textnormal{and} \;\; x\in [a,b]^d.
		\end{equation}
		\par 
		4. Next, we want to apply Theorem 6.3.2 from \cite{nicklgine}, for which two steps are needed: an appropriate lower bound on the $H^2$-distance between the $u_{f_m}$'s and a suitable upper bound on the KL-divergence of the laws $\mathbb P^\eps_{f_m}, \mathbb P^\eps_{f_0}$.
		\par
		5. We begin with the lower bound. By the isomorphism (\ref{lapl-isom}), for all $u\in H^2_0$ and $f\in \mathcal F$, we have that 
		\[\|u\|_{H^2}\gtrsim \|\Delta u\|_{L^2}=\|f^{-1}(L_fu-\nabla u\cdot \nabla f)\|_{L^2}\ge \|f\|^{-1}_{\infty}\|L_fu-\nabla u\cdot \nabla f\|_{L^2}. \]
		For all $m,m'=1,...,M$, using this inequality with $f=f_m$, 
		\begin{equation}\label{fmfm'}
		u=u_{f_m}-u_{f_{m'}}= V_{f_m}[\nabla\cdot (f_{m'} -f_m)\nabla u_{f_{m'}})]
		\end{equation}
		in view of (\ref{obpertu}), and $\sup_m\|f_m\|_{C^1}<\infty$,
		\begin{equation}\label{lowerbound}
		\begin{split}
		\|u_{f_m}-u_{f_{m'}}\|_{H^2} & \gtrsim \left\|\nabla\cdot \left(\left(f_m-f_{m'}\right)\nabla u_{f_{m'}}\right)\right\|_{L^2} - \|\nabla (u_{f_m}-u_{f_{m'}}) \cdot \nabla f_m\|_{L^2} \\
		&\geq \|\nabla(f_m-f_{m'}) \cdot \nabla u_{f_{m'}}\|_{L^2}-\|(f_m-f_{m'})\Delta u_{f_{m'}}\|_{L^2}  \\
		& \quad\quad -\|u_{f_m}-u_{f_{m'}}\|_{H^1} \|f_m\|_{C^1} =: I-II - III.
		\end{split}
		\end{equation}
		We will later show that the second and third terms are of smaller order than the first term. Using (\ref{derivest}), we see
		\begin{equation}\label{I-est}
		\begin{split}
		I&=\left\|\sum_{i=1}^{d}\partial_{x_i}(f_m- f_{m'})\partial_{x_i}u_{f_{m'}}\right\|_{L^2}\\
		&\geq \|\partial_{x_1}(f_m- f_{m'})\partial_{x_1}u_{f_{m'}}\|_{L^2}-\sum_{i=2}^{d}\|\partial_{x_i}(f_m- f_{m'})\partial_{x_i}u_{f_{m'}}\|_{L^2}\\
		&\geq \frac 1{4d}\|\partial_{x_1}(f_m- f_{m'})\|_{L^2}-\frac 2d\sum_{i=2}^{d}\|\partial_{x_i}(f_m- f_{m'})\|_{L^2}.
		\end{split}
		\end{equation}
		To estimate this further, we calculate that for any $i=2,...,d$,
		\begin{equation*}
		\begin{split}
		\partial_{x_i}\Psi_{j,r}(x)&=2^{\frac{jd}{2}}c^{-\frac{d-1}{2}}\Psi(2^jx_1+v^{j,r}_1)\\
		&\qquad\qquad \times \Big(\prod_{k=2,\;k\neq i}^d\Psi\Big(\frac{2^j}cx_k+v^{j,r}_k\Big)\Big)\frac{2^j}{c}\Psi'\left(\frac{2^j}cx_i+v^{j,r}_i\right). 
		\end{split}
		\end{equation*}
		Similarly calculating $\partial_{x_1}\Psi_{j,r}$ and summing over $r=1,...,n_j$, we obtain
		\[\|\partial_{x_i}(f_m-f_{m'})\|_{L^2}=\frac{1}{c}\|\partial_{x_1}(f_m-f_{m'})\|_{L^2}, \quad i=2,...,d. \]
		Thus, choosing $c$ large enough and combining this with (\ref{I-est}), we can ensure that
		\begin{equation*}
		\begin{split}
		I &\gtrsim \frac 1{4d}\|\partial_{x_1}(f_m- f_{m'})\|_{L^2}-\frac{2(d-1)}{cd}\|\partial_{x_1}(f_m- f_{m'})\|_{L^2}\\
		&\geq \frac 1{8d}\|\partial_{x_1}(f_m- f_{m'})\|_{L^2}.
		\end{split}
		\end{equation*}
		Moreover, as the first partial derivatives of the $\Psi_{j,r}$ still have disjoint support, they are orthonormal in $L^2$ and  by Parseval's identity we have
		\begin{equation}\label{I-est-2}
		\begin{split}
		\|\partial_{x_1}(f_m- f_{m'})\|_{L^2}^2&=\kappa^22^{-2j(\alpha+d/2)}\sum_{j=1}^{n_j}|\beta_{r,m}-\beta_{r,m'}|^2\|\partial_{x_1}\Psi_{j,r}\|_{L^2}^2\\
		&=\|\partial_{x_1}\Psi_{0,1}\|_{L^2}^2\kappa^22^{-2j(\alpha-1+d/2)}\sum_{j=1}^{n_j}|\beta_{r,m}-\beta_{r,m'}|^2.
		\end{split}
		\end{equation}
		By the Varshamov-Gilbert-bound (Example 3.1.4 in \cite{nicklgine}), for constants $c_1,c_2>0$ independent of $j$, we can find a subset $\mathcal M_j\subset \{-1,1\}^{c_02^{jd}}$ of cardinality $M_j=2^{c_12^{jd}}$ such that 
		$$\sum_{j=1}^{n_j}|\beta_{r,m}-\beta_{r,m'}|^2\geq c_22^{jd}$$
		whenever $m\neq m'$. For such a subset $\mathcal M_j$, by (\ref{I-est-2}) we have
		\begin{equation}\label{lowerbound2}
		I\gtrsim\|\partial_{x_1}(f_m- f_{m'})\|_{L^2}\gtrsim 2^{-j(\alpha-1)}.
		\end{equation}
		\par
		6. We next show that $II$ and $III$ in (\ref{lowerbound}) are of smaller order as $j \to \infty$. With the above choice of $f_m$'s, we have from Parseval's identity and (\ref{c-h-mult})
		\begin{equation*}
		\begin{split}
		II^2&\leq \|f_m-f_{m'}\|_{L^2}^2\|u_{f_m}\|^2_{\mathcal C^2}= \kappa^22^{-2j(\alpha+d/2)}\sum_{r=1}^{n_j}|\beta_{r,m}-\beta_{r,m'}|^2 \|u_{f_m}\|^2_{\mathcal C^2}\\
		&\lesssim 2^{-2j\alpha}=o (2^{-2j(\alpha-1)}),
		\end{split}
		\end{equation*}
		and for term $III$ we have, by (\ref{fmfm'}), (\ref{h-int}), Lemma \ref{lem-div-h2} and arguing as in the first display of Step 7 to follow, that
		\begin{align*}
		\|u_{f_m}- & u_{f_{m'}}\|_{H^1}  \lesssim \|u_{f_m}-u_{f_m'}\|^{1/2}_{H^2}  \|u_{f_m}-u_{f_{m'}}\|^{1/2}_{L^2} \\
		&\lesssim \|\nabla\cdot ((f_m-f_{m'})\nabla u_{f_{m'}})]\|^{1/2}_{L^2}  \|[\nabla\cdot ((f_m-f_{m'})\nabla u_{f_{m'}})]\|^{1/2}_{(H^2_0)^*} \\
		& \lesssim \|f_m-f_{m'}\|_{H^1}^{1/2} \|f_m-f_{m'}\|^{1/2}_{H^{-1}} \lesssim 2^{-j\alpha}=o (2^{-j(\alpha-1)}),
		\end{align*}
		where the first factor in the last line is bounded by $2^{-j(\alpha/2-1/2)}$ by similar arguments as in (\ref{I-est-2}). Combining the last two displayed estimates with (\ref{lowerbound}) and (\ref{lowerbound2}) gives the overall lower bound 
		\[\|u_{f_m}-u_{f_{m'}}\|_{H^2} \gtrsim 2^{-j(\alpha-1)} \approx \varepsilon^{\frac{2(\alpha-1)}{2(\alpha+1)+d}}\] with choice $j=j_\varepsilon$ such that $2^{j}\simeq \varepsilon^{-2/(2\alpha+2+d)}$.
		\par
		7. Now we show the upper bound. Arguing as in (\ref{obpertu}), using Lemma \ref{lem-div-h2}, integrating by parts and using the wavelet characterisation of the $H^{-1}(\mathbb R^d)$-norm (e.g., Section 4.3 in \cite{nicklgine} with $B^s_{2,2}=H^s, s \in \mathbb R$) as well as the interior support of the $\Psi_{j,r}$, we estimate
		\begin{equation*}
		\begin{split}
		\|u_{f_m}-u_{f_0}\|_{L^2}^2&\lesssim \left\|\nabla \cdot \left((f_m-f_{0})\nabla u_{f_0}\right)\right\|_{(H^2_0)^*}^2\\
		&=\left(\sup_{\|\psi\|_{H^2_0}\leq 1}\left|\int_{\mathbb R^d} \nabla \psi\cdot \nabla u_{f_0} (f_m-f_{0})\right|\right)^2\\
		&\lesssim \|f_m-{f_{0}}\|_{H^{-1}(\mathbb{R}^d)}^2 \|u_{f_0}\|_{\mathcal C^1} \\
		&\simeq \kappa^22^{-2j(\alpha+d/2+1)}\sum_{r=1}^{n_j}1 \lesssim 2^{-2j(\alpha+1)}.
		\end{split}
		\end{equation*}
		By definition of $M_j$, using the results in Section 7.4 in \cite{n17} and arguing as in (6.16) in \cite{nicklgine} we thus bound the information distances as
		$$\textnormal{KL}(\mathbb P^{\varepsilon}_{u_{f_m}},\mathbb P^{\varepsilon}_{u_{f_0}})\lesssim \varepsilon^{-2} \|u_{f_m} - u_{f_0}\|_{L^2}^2 \lesssim  \varepsilon^{-2}2^{-2j(\alpha+1)}=2^{jd}\lesssim \log M_j,$$
		so that the overall result now follows from Theorem 6.3.2 in \cite{nicklgine}.
	\end{proof}

	\subsection{Proof of Theorems \ref{thm-schr-pred} and \ref{thm-schr-f}}\label{sec-schr-pfs}
	The proof of Theorem \ref{thm-schr-pred} follows the same principle as the proof of Theorem \ref{thm-div-pred}. By arguing exactly as in the first two steps of the proof of Theorem \ref{thm-div-pred}, in order to be able to apply Theorem \ref{thm-gen}, we now verify that the map 
	\[\mathscr G:H^\alpha_c\to L^2, \qquad \mathscr G(F):=G(\Phi\circ F), \]
	satisfies (\ref{entrcond}) with $\mathbb H = L^2, \gamma =4, \kappa=2$. Let $F,H\in H^\alpha$ and $f=\Phi\circ F$, $h=\Phi\circ H\in\mathcal F$. By (\ref{schroedeq}), $u_f-u_h$ satisfies 
	\[(u_f-u_h)|_{\partial\mathcal O}=0, \qquad L_f[u_f-u_h]=(L_h-L_f)[u_h]=(f-h)u_h \] where $L_f$ is defined in Section \ref{sec-schr-G} below. Using this, the norm estimate (\ref{schr-h-2l2}), Lemma \ref{lem-schr-c2}, the embedding $H^\alpha\subseteq C^2(\mathcal O)$ as well as (\ref{phi-neg-sob}), we can then estimate
	\begin{equation*}
	\begin{split}
	\|\mathscr G(F)-&\mathscr G(H) \|_{L^2}=\left\|u_f-u_h \right\|_{L^2}\\
	&\lesssim \left(1+\|f\|_{\infty}\right)\left\|(f-h)u_h \right\|_{(H^2_0)^*}\\
	&\leq \left(1+\|f\|_{\infty}\right)\|u_h\|_{\mathcal C^2}\left\|f-h\right\|_{(H^2_0)^*}\\
	&\lesssim  \left(1+\|f\|_{\infty}\right)\left(1+\|h\|_{\infty}\right)\left\|f-h\right\|_{(H^{2})^*}\\
	&\lesssim \left(1+\|F\|_{\infty}^2\vee\|H\|_{\infty}^2\right)\left\|F-H \right\|_{(H^{2})^*}\left(1+ \|F\|_{C^2}^2\vee \|H\|_{C^2}^2\right)  \\
	&\lesssim \left(1+\|F\|_{H^\alpha}^4\vee \|H\|_{H^\alpha}^4\right)\left\|F-H \right\|_{(H^{2})^*}.
	\end{split}
	\end{equation*}
	Thus (\ref{entrcond}) is fulfilled for $\gamma =4$ and $\kappa=2$. The existence of maximizers now follows from the first part of Theorem \ref{thm-gen}. The proof of the concentration inequality (\ref{schr-conc}) is completely analogous to the proof of (\ref{div-conc}), and the convergence rate (\ref{schr-rate}) follows from the same argument as in the proof of Theorem \ref{thm-div-pred}, utilizing Lemma \ref{lem-schr-c2} in place of Lemma \ref{lem-div-bd}. 
	
	Finally, the proof Theorem \ref{thm-schr-f} is analogous to that of Theorem \ref{thm-div-f}, but using Lemma \ref{lem-schr-stab} instead of Lemma \ref{lem-div-stab}, and is left to the reader.

	\section{Some PDE facts}\label{sec-pde-facts}

	In this section, we collect some key PDE facts which are needed to prove the results in Section \ref{sec-pderes}. 
	
	\subsection{Preliminaries}
	Besides the classical H\"older spaces $C^\alpha(\mathcal O)$, we will also need the H\"older-Zygmund spaces $\mathcal C^\alpha(\mathcal O)$, see Section 3.4.2 in \cite{triebel} for definitions. For $\alpha\geq 0$, $\alpha\notin\mathbb N$, we have that $C^\alpha=\mathcal C^\alpha$ with equivalent norms, and we have the continuous embeddings $\mathcal C^{\alpha'}\subseteq C^\alpha\subseteq \mathcal C^\alpha$ for all $\alpha'>\alpha\geq 0$.
	\par
	We will repeatedly use the multiplicative inequalities
	\begin{align}
	\|fg\|_{H^\alpha}&\lesssim \|f\|_{H^\alpha}\|g\|_{H^\alpha},\qquad \alpha>d/2, \label{h-mult}\\
	\|fg\|_{H^\alpha}&\lesssim \|f\|_{\mathcal C^\alpha}\|g\|_{H^\alpha}, \qquad \alpha\geq 0\label{c-h-mult},\\
	\|fg\|_{\mathcal C^\alpha}&\lesssim\|f\|_{\mathcal C^\alpha}\|g\|_{\mathcal C^\alpha}, \qquad \alpha\geq 0 \label{c-mult}
	\end{align}
	for all $f,g$ in the appropriate function spaces, which follow from Remark 1 on p.143 and Theorem 2.8.3 in \cite{triebel}. For any $\alpha>d/2$ and $0\leq \eta<\alpha-d/2$, we also need the continuous embedding $H^\alpha\subseteq C^\eta$, with the norm estimate
	\begin{equation}\label{h-emb}
	\forall f\in H^\alpha,\;\;	\|f\|_{C^\eta}\lesssim\|f\|_{H^\alpha}.
	\end{equation}
	
	Let $\text{tr}[\cdot]$ denote the usual trace operator for functions defined on $\mathcal O$ (for the definition on Sobolev spaces, see, e.g., Chapter 5.5 in \cite{evans}). In this and the next section, we will repeatedly use the fact that the standard Laplacian $\Delta$ and $\text{tr}[\cdot]$ establish topological isomorphisms between appropriate Sobolev and H\"older-Zygmund spaces. That is, for each $\alpha\geq 0$, we have the topological isomorphisms
	\begin{align}
	&(\Delta,\textnormal{tr}): H^{\alpha+2}(\mathcal O)\to H^\alpha(\mathcal O)\times H^{\alpha+3/2}(\partial\mathcal O), \quad u\mapsto (\Delta u, \textnormal{tr}[u]),\label{h-isom} \\
	&(\Delta,\textnormal{tr}): \mathcal C^{\alpha+2}(\mathcal O)\to\mathcal C^{\alpha}(\mathcal O)\times \mathcal C^{\alpha+2}(\partial \mathcal O), \quad u\mapsto (\Delta u, \textnormal{tr}[u]),\label{c-isom}
	\end{align}
	which follow from Theorem II.5.4 in \cite{lionsmagenes} and Theorem 4.3.4 in \cite{triebel} respectively. Moreover, for any $\alpha\geq 1$, we will use the notation
	\begin{align}\label{h0}
	H^\alpha_0(\mathcal O):=\left\{f\in H^\alpha(\mathcal O)\;\middle| \; \textnormal{tr}[f]=0 \right\},\quad \mathcal C^\alpha_0(\mathcal O):=\left\{f\in \mathcal C^\alpha(\mathcal O)\;\middle| \; \textnormal{tr}[f]=0 \right\}.
	\end{align}
	We also need the following interpolation inequalities. For all $\beta_1,\beta_2\geq 0$ and $\theta\in [0,1]$, there exists a constant $C<\infty$ such that 
	\begin{align}
	&\forall u\in \mathcal C^{\beta_1}\cap \mathcal C^{\beta_2}:\quad \|u\|_{\mathcal C^{\theta \beta_1+(1-\theta)\beta_2}}\leq C \|u\|_{\mathcal C^{\beta_1}}^\theta\|u\|_{\mathcal C^{\beta_2}}^{1-\theta}, \label{c-int}\\
	&\forall u\in H^{\beta_1}\cap H^{\beta_2}:\quad \|u\|_{H^{\theta \beta_1+(1-\theta)\beta_2}}\leq C \|u\|_{H^{\beta_1}}^\theta\|u\|_{H^{\beta_2}}^{1-\theta}, \label{h-int}
	\end{align}
	see Theorems 1.3.3 and 4.3.1 in \cite{T78} (and note $\mathcal C^\beta=B^\beta_{\infty, \infty}, H^\beta=B^\beta_{2, 2}$).
	\par

	\subsection{Divergence form equation}\label{sec-div-facts}

	\subsubsection{Estimates for $V_f$}\label{vfop}
	For each $f\in C^1(\bar{\mathcal O})$ with $f\geq K_{min}>0$, we define the differential operator
	$$L_f:H^2_0(\mathcal O)\to L^2(\mathcal O),\qquad L_f[u]=	\nabla\cdot (f\nabla u).$$
	By standard theory for elliptic PDEs, $L_f$ has a linear, continuous inverse operator, which we denote by
	$$V_f:L^2(\mathcal O)\to H^2_0(\mathcal O),\qquad \psi\mapsto V_f\left[\psi\right],$$
	see \cite{evans}, Theorem 4 in Chapter 6.3. In other words, for each right hand side $\psi\in L^2$, there exists a unique function $w_{f,\psi}:=V_f\left[\psi\right]\in H^2_0$ solving the Dirichlet problem 
	\begin{equation}\label{div2}
	\begin{cases}
	L_f[w_{f,\psi}]=\psi \quad \textnormal{on }\mathcal O,\\
	w_{f,\psi}=0 \quad \textnormal{on }\partial \mathcal O
	\end{cases}
	\end{equation}
	weakly, i.e. in the sense that the identity
	\begin{equation}\label{div-wk}
	-\int_{\mathcal O}\sum_{i=1}^{d}f D_iw_{f,\psi}D_iv \;=\int_{\mathcal O}\psi v \; 
	\end{equation}
	holds for all test functions $v\in H^1_0(\mathcal O)$ (cf. \cite{evans}, Chapter 6). By the zero boundary conditions of (\ref{div}) and the divergence theorem, any classical solution (i.e. $C^2$ solution) must be equal to the unique weak solution when interpreted as an $H^2_0$ function.
	\par 
	Theorem 4 in Chapter 6.3 of \cite{evans} implies that there exists a constant $C=C_f$ (allowed to depend on $f$) such that for all $\psi\in L^2$, we have the norm estimate
	$\|V_f\left[\psi\right]\|_{H^2}\leq C_f\|\psi\|_{L^2},$ and we need a result that tracks the dependence of $C_f$ on $f$ in a quantitative way. We first establish that when we only seek an $L^p\to L^p$-estimate, $p\in\{2,\infty\}$, rather than an $L^2 \to H^2$-estimate, the constant merely depends on the lower bound $K_{min}$ for $f$.

	\begin{lem}\label{div-lem-lp}
		Let $K_{min}>0$. Then there exists $C=C(d,\mathcal O, K_{min})$ such that for all $f\in C^2(\mathcal O)$ with $f\geq K_{min}>0$ and $\psi\in L^2$, we have
		\begin{equation}\label{div-l2}
		\|V_f\left[\psi\right]\|_{L^2}\leq C\|\psi\|_{L^2} 
		\end{equation}
		and for all $\psi\in C^\eta(\mathcal O), \eta>0$,
		\begin{equation}\label{div-linf}
		\|V_f\left[\psi\right]\|_{\infty}\leq C\|\psi\|_{\infty}.
		\end{equation}
	\end{lem}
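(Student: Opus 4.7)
Both estimates are proved by elementary energy/truncation methods that exploit only the ellipticity $f \geq K_{min}$ via the weak formulation (\ref{div-wk}), never any upper bound or regularity of $f$.

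\textbf{The $L^2$ bound.} Write $u = V_f[\psi] \in H^2_0(\mathcal O)$ and plug $v = u \in H^1_0(\mathcal O)$ into (\ref{div-wk}). This yields
\begin{equation*}
K_{min}\|\nabla u\|_{L^2}^2 \;\leq\; \int_{\mathcal O} f\,|\nabla u|^2 \;=\; -\int_{\mathcal O}\psi\,u \;\leq\; \|\psi\|_{L^2}\|u\|_{L^2}.
\end{equation*}
Since $u$ vanishes on $\partial\mathcal O$, the Poincar\'e inequality gives $\|u\|_{L^2} \leq C_P(\mathcal O)\|\nabla u\|_{L^2}$, and combining the two displays immediately gives $\|u\|_{L^2} \leq C_P^2 K_{min}^{-1}\|\psi\|_{L^2}$, which is (\ref{div-l2}).

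\textbf{The $L^\infty$ bound.} Here I would use Stampacchia's truncation argument. For $k\geq 0$, set $A(k):=\{u>k\}$, observe $v = (u-k)_+\in H^1_0(\mathcal O)$, and test (\ref{div-wk}) against it:
\begin{equation*}
K_{min}\int_{A(k)}|\nabla u|^2 \;\leq\; \int_{A(k)} f|\nabla u|^2 \;\leq\; \|\psi\|_\infty\int_{A(k)}(u-k).
\end{equation*}
Sobolev embedding $H^1_0\hookrightarrow L^{2^*}$ (with $2^*=2d/(d-2)$ for $d\geq 3$; the $d=1,2$ cases are easier and require only a trivial modification) together with H\"older's inequality in the form $\int_{A(k)}(u-k)\leq |A(k)|^{1-1/2^*}\|(u-k)_+\|_{L^{2^*}}$ yields
\begin{equation*}
\|(u-k)_+\|_{L^{2^*}} \;\leq\; C_1(d,\mathcal O)\,K_{min}^{-1}\|\psi\|_\infty\,|A(k)|^{1-1/2^*}.
\end{equation*}
Then for $h>k\geq 0$, since $u-k \geq h-k$ on $A(h)\subseteq A(k)$, one gets $(h-k)|A(h)|^{1/2^*}\leq \|(u-k)_+\|_{L^{2^*}}$, whence
\begin{equation*}
|A(h)| \;\leq\; \frac{\bigl(C_1 K_{min}^{-1}\|\psi\|_\infty\bigr)^{2^*}}{(h-k)^{2^*}}\,|A(k)|^{2^*-1}.
\end{equation*}
Stampacchia's iteration lemma (e.g.\ Lemma B.1 in \cite{KS80}-style references) applied to $\varphi(k):=|A(k)|$, which has exponent $2^*-1>1$, then produces $k_0 = C_2(d,\mathcal O)K_{min}^{-1}\|\psi\|_\infty$ with $|A(k_0)|=0$. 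Hence $u \leq k_0$ a.e., and applying the same argument to $-u = V_f[-\psi]$ (by linearity) yields the symmetric lower bound. This proves (\ref{div-linf}) with $C = C_2 K_{min}^{-1}$.

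\textbf{Main obstacle and a remark.} The statement is sharp in that the constant depends only on $d$, $\mathcal O$, $K_{min}$; no upper bound on $f$ nor on any derivatives of $f$ enters. The reason this is possible is that the weak formulation (\ref{div-wk}) already absorbs one derivative onto the test function, so the only use of $f$ is the pointwise lower bound inside the bilinear form. The only delicate point in the write-up is the De Giorgi/Stampacchia iteration for the $L^\infty$ bound; everything else is a one-line energy estimate. Since the hypothesis $\psi\in C^\eta$ guarantees via elliptic regularity that $u\in C^{2+\eta}(\mathcal O)\cap C(\bar{\mathcal O})$, the essential-supremum bound coming out of the iteration is in fact the pointwise $\|\cdot\|_\infty$ appearing in the statement.
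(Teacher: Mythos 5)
Your proof is correct, but it takes a genuinely different route from the paper. The paper proves both bounds probabilistically: it invokes the Feynman--Kac representation $V_f[\psi](x)=-\tfrac 12 \E^x\big[\int_0^{\tau_{\mathcal O}}\psi(X^f_s)\,ds\big]$ for the diffusion with generator $L_f/2$, bounds $\sup_x\E^x\tau_{\mathcal O}$ uniformly in $f$ via an Aronson-type heat-kernel estimate $p_f(t,x,y)\lesssim t^{-d/2}$ that depends only on the ellipticity constant $K_{min}$, derives (\ref{div-linf}) from this, and then obtains (\ref{div-l2}) from the Green's-kernel representation $V_f[\psi]=-\int G_f(\cdot,y)\psi(y)\,dy$, using non-negativity of $G_f$, Cauchy--Schwarz, and $\|V_f[1]\|_\infty\le c$. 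Your argument instead stays entirely within variational PDE theory: a one-line energy estimate plus Poincar\'e for (\ref{div-l2}), and Stampacchia's truncation/iteration for (\ref{div-linf}). Both approaches use only the lower bound $f\ge K_{min}$ and nothing about $\|f\|_\infty$ or derivatives of $f$, so the constants have the same dependence. The trade-offs: your route is more elementary and self-contained (no Markov process machinery, no heat-kernel bounds), and it also naturally proves the $L^2$ bound for all $\psi\in L^2$ without an approximation step, whereas the paper first proves it for $\psi\in C^\eta$ and then extends by density. The probabilistic route, on the other hand, gives as a by-product the positivity of the Green's kernel and uniform exit-time bounds, which the paper reuses later (e.g.\ in the Hopf-lemma step of the proof of Lemma \ref{lem-div-stab}), so the Feynman--Kac formula is not just an incidental tool there. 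Two minor remarks on your write-up: the derivation of the Stampacchia inequality should cite the sign convention of (\ref{div-wk}) explicitly (with $v=(u-k)_+$ the left side is $-\int_{A(k)}f|\nabla u|^2$, so the inequality you wrote comes out correctly); and for $d\le 2$ you gesture at a ``trivial modification'' --- that is true (replace $2^*$ by any finite exponent) but would need a sentence if this were a final write-up.
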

	\begin{proof}
		Assume first that $\psi \in C^\eta(\mathcal O)$ so that $V_f[\psi]\in C(\bar{\mathcal O})\cap C^2(\mathcal O)$ (see after (\ref{div})). Then we have the Feynman-Kac formula
		\begin{equation}\label{div-fk}
		V_f[\psi](x)=-\frac{1}{2}\E^x\left[\int_0^{\tau_\mathcal O}\psi(X^f_s)ds \right], \qquad x\in\mathcal O,
		\end{equation}
		where $(X^f_s:s \ge 0)$ is a diffusion Markov process started at $x \in \mathcal O$ with infinitesimal generator $L_f/2$ and expectation operator $\E^x$, and where $\tau_\mathcal O$ is the exit time of $X^f_s$ from $\mathcal O$, see, e.g., Theorem 1.2 in Section II of \cite{bass}. We also record that, by Theorem 4.3 in Section VII of \cite{bass} and inspection of its proof, there exists a constant $c_1$ only depending on the lower bound $K_{min}<f$ and on $d$, such that the transition densities of $(X^f_s: s \ge 0)$ exist and satisfy the estimate 
		\begin{equation}\label{div-heat}
		p_f(t,x,y) \le c_1 t^{-d/2}, \qquad t>0,\quad x,y\in\R^d. 
		\end{equation}
		Then, arguing as in the proof of Theorem 1.17 in \cite{chungzhao}, with (\ref{div-heat}) replacing the standard heat kernel estimate for Brownian motion, we obtain that $\sup_{x \in \mathcal O}\E^x \tau_\mathcal O\leq c,$ with $c=c(\mathcal O, d, c_1)$, and hence (\ref{div-linf}) follows from
		\begin{equation}\label{vf1}
		\|V_f[1]\|_\infty \le \sup_{x \in \mathcal O}\E^x \tau_\mathcal O \le c.
		\end{equation}
		Using what precedes one further shows that $V_f$ has a representation via a non-negative and symmetric integral kernel $G_f(\cdot,\cdot)$, such that
		\begin{equation}\label{div-green}
		V_f[\psi](x)=-\int_{\mathcal O}G_f(x,y)\psi(y)dy, ~ x\in\mathcal O,~~\forall~ \psi\in C^\eta(\mathcal O).
		\end{equation}
		Then using (\ref{vf1}), the Cauchy-Schwarz inequality and the positivity of $G$ we have for all $\psi\in C^\eta(\mathcal O)$,
		\[\|V_f[\psi]\|_{L^2}^2 \le \int_{\mathcal O} \int_{\mathcal O} G_f(x,y)dy \int_{\mathcal O} G_f(x,y)\psi^2(y)dy dx \le \|V_f[1]\|_\infty^2 \|\psi\|_{L^2}^2,\]
		whence (\ref{div-l2}) follows for $\psi\in C^\eta(\mathcal O)$, and extends to $\psi\in L^2$ by approximation since $V_f$ is a continuous operator on $L^2(\mathcal O)$ (as established above).
	\end{proof}
	
	Lemma \ref{div-lem-lp} will be used in the proof of the following stronger elliptic regularity estimate.

	\begin{lem}\label{lem-div-h2}
		Let $K_{min}>0$. Then there exists a universal constant $C>0$ such that for all $f\in C^{2}(\mathcal O)$ with $f\geq K_{min}$ and $\psi\in L^2(\mathcal O)$, the unique weak solution $w_{f,\psi}=V_f[\psi]$ to (\ref{div2}) satisfies
		\begin{align}
		\|V_f\left[\psi\right]\|_{H^2}&\leq C\left(1+\|f\|_{C^1}\right)\|\psi\|_{L^2}, \label{div-l2h2}\\
		\|V_f\left[\psi\right]\|_{L^2}&\leq C\left(1+\|f\|_{C^1}\right)\|\psi\|_{(H^2_0)^*} \label{div-h-2},
		\end{align}
		where $C$ only depends on $K_{min}$ and $\mathcal O, d$.
	\end{lem}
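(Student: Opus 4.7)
The plan is to prove the two bounds sequentially, with (\ref{div-l2h2}) serving as the key input to (\ref{div-h-2}) via duality and the self-adjointness of $L_f$ on $H^2_0(\mathcal O)$.

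For (\ref{div-l2h2}), set $w = V_f[\psi]$ and exploit the fact that, since $f \in C^1$ and $f \ge K_{min}>0$, the equation $L_f[w] = \psi$ can be rewritten pointwise as $\Delta w = f^{-1}(\psi - \nabla f \cdot \nabla w)$. Because $w \in H^2_0(\mathcal O)$ (this membership follows from the standard Lax--Milgram theory cited right after the definition of $V_f$), the isomorphism (\ref{h-isom}) with $\alpha=0$ restricted to $H^2_0$ gives $\|w\|_{H^2} \lesssim \|\Delta w\|_{L^2}$ with constant depending only on $\mathcal O$. I would then estimate
\[
\|\Delta w\|_{L^2} \le K_{min}^{-1}\bigl(\|\psi\|_{L^2} + \|f\|_{C^1}\|\nabla w\|_{L^2}\bigr),
\]
and close the estimate by producing a bound on $\|\nabla w\|_{L^2}$ depending on $K_{min}$ and $\|\psi\|_{L^2}$ \emph{only}. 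The natural tool is the energy identity obtained by inserting $v=w$ into the weak formulation (\ref{div-wk}), which yields $\int f|\nabla w|^2 = -\int \psi w$, so by Poincar\'e's inequality (applicable because $w$ vanishes on $\partial\mathcal O$) one gets $\|\nabla w\|_{L^2} \lesssim K_{min}^{-1}\|\psi\|_{L^2}$. Combining these three ingredients gives (\ref{div-l2h2}) with the correct dependence on $\|f\|_{C^1}$ and $K_{min}$.

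For (\ref{div-h-2}), I would argue by duality. For any $\chi \in L^2$ set $\eta := V_f[\chi]\in H^2_0$, and observe that since both $w$ and $\eta$ vanish on $\partial\mathcal O$, integration by parts twice (with vanishing boundary terms) gives
\[
\langle V_f[\psi],\chi\rangle_{L^2} = \langle w, L_f[\eta]\rangle_{L^2} = -\int_{\mathcal O} f\,\nabla w\cdot\nabla\eta = \langle L_f[w],\eta\rangle_{L^2} = \langle \psi,\eta\rangle_{L^2}.
\]
Since $\eta\in H^2_0$, we can bound $\langle \psi,\eta\rangle_{L^2} \le \|\psi\|_{(H^2_0)^*}\|\eta\|_{H^2}$, and then apply the already-proved (\ref{div-l2h2}) to obtain $\|\eta\|_{H^2}\lesssim (1+\|f\|_{C^1})\|\chi\|_{L^2}$. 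Taking the supremum over $\chi$ with $\|\chi\|_{L^2}\le 1$ yields (\ref{div-h-2}).

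I do not expect a serious obstacle here; the two arguments are essentially classical. The only point requiring mild care is the quantitative tracking of constants: one must ensure that both the Poincar\'e constant and the constant in the $H^2_0$-isomorphism (\ref{h-isom}) depend only on $\mathcal O$ and $d$ (not on $f$), so that the final constant depends only on $K_{min}, \mathcal O, d$ as claimed, with the $f$-dependence appearing solely through the explicit prefactor $(1+\|f\|_{C^1})$. A small bookkeeping point in the duality step is that the identification of $L^2$ with a subspace of $(H^2_0)^*$ (via the $L^2$ pairing) is precisely what allows us to apply the dual norm bound to $\psi\in L^2$.
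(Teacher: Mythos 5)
Your proof is correct and follows essentially the same strategy as the paper: combine the Laplace isomorphism $\|w\|_{H^2}\lesssim\|\Delta w\|_{L^2}$ with the pointwise rewriting $\Delta w = f^{-1}(\psi-\nabla f\cdot\nabla w)$, close the estimate using the energy identity from the weak formulation, and then deduce (\ref{div-h-2}) by duality through the self-adjointness of $L_f$. The one small variation is how you control the lower-order term: the paper bounds $\|\nabla w\|_{L^2}^2\lesssim\|\psi\|_{L^2}^2+\|w\|_{L^2}^2$ and then invokes Lemma~\ref{div-lem-lp} (the probabilistic Feynman--Kac $L^2\to L^2$ estimate) to absorb $\|w\|_{L^2}$, whereas you close directly with Poincar\'e's inequality on $H^1_0$. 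Your route is a touch more self-contained for this step, though the paper needs Lemma~\ref{div-lem-lp} elsewhere anyway; both give constants depending only on $K_{min}$, $\mathcal O$ and $d$, as required.
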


	\begin{proof}
		Let $f\in C^1$ and $\psi\in L^2$. By (\ref{h-isom}), there exists a constant $C>0$ depending only on $\mathcal O, d$ such that for all $u\in H^2_0$,
		\begin{equation}\label{lapl-isom}
		C^{-1}\|\Delta u\|_{L^2}\leq \|u\|_{H^2}\leq C\|\Delta u\|_{L^2}.
		\end{equation}
		Moreover we have by the definition of $L_f$ that
		\begin{equation}\label{div-lapl}
		\Delta u=f^{-1}(L_fu-\nabla f\cdot \nabla u).
		\end{equation}
		Writing $w=w_{f,\psi}$ and utilising (\ref{lapl-isom}) and (\ref{div-lapl}), we can estimate
		\begin{equation}\label{h2start}
		\begin{split}
		\|w\|_{H^2}&\leq C\|\Delta w\|_{L^2}=C\left\|f^{-1}(\psi-\nabla w\cdot\nabla f)\right\|_{L^2}\\
		&\leq CK_{min}^{-1}\left(\|\psi\|_{L^2}+\|f\|_{C_1}\|w\|_{H^1}.\right)
		\end{split}
		\end{equation}
		By choosing the test function $-w\in H^1_0$ in the weak formulation (\ref{div-wk}), we have that
		\[K_{min}\int_{\mathcal O}|Dw|^2\leq \int_{\mathcal O}\sum_{i=1}^{d}f(D_iw)^2 =\int_{\mathcal O}-\psi w \;\leq \frac 12 \int_{\mathcal O}(\psi^2+w^2) \;. \]
		Combining this with (\ref{h2start}) and Lemma \ref{div-lem-lp}, we finally obtain that for constants $C',C'',C'''$ only depending on $K_{min}$ and $\mathcal O$, we have
		\begin{equation*}
		\begin{split}
		\|w\|_{H^2}&\leq C'K_{min}^{-1}\left(\|\psi\|_{L^2}+\|f\|_{C^1}C''(\|\psi\|_{L^2}+\|w\|_{L^2} )\right)\\
		&=C'''\left(1+\|f\|_{C^1}\right)\|\psi\|_{L^2},
		\end{split}
		\end{equation*}
		which proves (\ref{div-l2h2}).
		\par 
		Next, using the divergence theorem and (\ref{div-l2h2}), we obtain (\ref{div-h-2}) from
		\begin{align*}
		\|V_{f}[\psi]\|_{L^2}&=\sup_{\varphi\in C^\infty_c,\;\; \|\varphi\|_{L^2}\leq 1}\left|\int_{\mathcal O} V_{f}[\psi]\varphi \right|\\
		&=\sup_{\varphi\in C_c^\infty (\mathcal O),\;\; \|\varphi\|_{L^2}\leq 1}\left|\int_{\mathcal O} V_{f}[\psi]L_fV_f[\varphi] \right|\\
		&=\sup_{\varphi\in C_c^\infty (\mathcal O),\;\; \|\varphi\|_{L^2}\leq 1}\left|\int_{\mathcal O} \psi V_f[\varphi] \right|\\
		& \leq C(1+\|f\|_{C^1})\sup_{\varphi \in H^2_0,\;\; \|\varphi\|_{H^2}\leq 1}\left|\int_{\mathcal O} \psi \varphi \right| =C(1+\|f\|_{C^1})\|\psi\|_{(H^2_0)^*}.
		\end{align*}
	\end{proof}

	\subsubsection{Estimates for $G$}\label{sec-div-G}
	Now we turn to the forward map $G$ representing the solutions of the PDE (\ref{div}). The following norm estimate for the $\mathcal C^2$-H\"older-Zygmund norm of $G(f)=u_f$ is needed.
	
	\begin{lem}\label{lem-div-c}
		Suppose that for some $K_{min}>0$, $\alpha>d/2+2$ and $g\in \mathcal C^\eta(\mathcal O), \eta>0$, $\tilde{\mathcal F}$ is as in (\ref{ftilde}) and $u_f$ denotes the unique solution of (\ref{div}). Then there exists $C=C(d, \mathcal O, K_{min}, \|g\|_\infty)$ such that for all $f\in\tilde{\mathcal F}$,
		\begin{equation}\label{div-c}
		\|u_f\|_{\mathcal C^2}\leq C\left(1+\|f\|_{\mathcal C^1}^2\right).
		\end{equation}
	\end{lem}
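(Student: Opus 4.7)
The plan is to reduce the $\mathcal C^2$ bound to an $L^\infty$-type bound on $\Delta u$ via the Schauder isomorphism (\ref{c-isom}), then use the PDE together with the multiplicative and interpolation inequalities in the H\"older--Zygmund scale, and finally close the estimate by Young's inequality.

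First, since $u := u_f$ vanishes on $\partial \mathcal O$ and lies a priori in $C^{2+\eta}(\mathcal O) \cap C(\bar{\mathcal O}) \subset \mathcal C^2(\mathcal O)$ by the Schauder theory recalled at the start of Section \ref{sec-div-facts}, (\ref{c-isom}) with $\alpha = 0$ and vanishing trace yields $\|u\|_{\mathcal C^2} \leq C \|\Delta u\|_{\mathcal C^0}$, with $C$ depending only on $\mathcal O$ and $d$. Rewriting (\ref{div}) as $\Delta u = f^{-1}(g - \nabla f \cdot \nabla u)$ and applying (\ref{c-mult}) (together with $\|1/f\|_\infty \leq K_{min}^{-1}$ and the embedding $L^\infty \hookrightarrow \mathcal C^0$), I would obtain
\[\|\Delta u\|_{\mathcal C^0} \leq C\left(\|g\|_\infty + \|\nabla f\|_{\mathcal C^0} \|\nabla u\|_{\mathcal C^0}\right).\]
The standard Besov differentiation bound $\|\partial_i \phi\|_{\mathcal C^0} \lesssim \|\phi\|_{\mathcal C^1}$ (a direct consequence of $\mathcal C^\alpha = B^\alpha_{\infty, \infty}$; see Section 2.3.8 in \cite{triebel}) then yields $\|\nabla f\|_{\mathcal C^0} \lesssim \|f\|_{\mathcal C^1}$ and $\|\nabla u\|_{\mathcal C^0} \lesssim \|u\|_{\mathcal C^1}$, and the interpolation (\ref{c-int}) with $\theta = 1/2$, $\beta_1 = 0$, $\beta_2 = 2$ together with the $L^\infty$-bound $\|u\|_\infty \leq C\|g\|_\infty$ from Lemma \ref{div-lem-lp} gives
\[\|u\|_{\mathcal C^1} \leq C\|u\|_{\mathcal C^0}^{1/2}\|u\|_{\mathcal C^2}^{1/2} \leq C\|u\|_\infty^{1/2}\|u\|_{\mathcal C^2}^{1/2} \leq C\|u\|_{\mathcal C^2}^{1/2},\]
where the final $C$ absorbs the factor $\|u\|_\infty^{1/2} \leq C\|g\|_\infty^{1/2}$. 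Assembling these estimates produces the key inequality
\[\|u\|_{\mathcal C^2} \leq C\bigl(1 + \|f\|_{\mathcal C^1}\|u\|_{\mathcal C^2}^{1/2}\bigr).\]

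The final step is Young's inequality in the form $ab \leq \eps a^2 + (4\eps)^{-1}b^2$, applied with $a = \|u\|_{\mathcal C^2}^{1/2}$, $b = C\|f\|_{\mathcal C^1}$, and $\eps$ small enough that $\eps\|u\|_{\mathcal C^2}$ can be absorbed into the left-hand side of the previous display; this delivers the claimed estimate $\|u\|_{\mathcal C^2} \leq C(1 + \|f\|_{\mathcal C^1}^2)$. The main technical subtlety, as I see it, is the manipulation of $\mathcal C^0 = B^0_{\infty, \infty}$, which is strictly larger than $L^\infty$ and does not inherit a clean pointwise product structure; the validity of the multiplicative bound (\ref{c-mult}) and of the Besov differentiation estimate at the endpoint index $\alpha = 0$ however both follow from the Triebel references already invoked by the paper. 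The a priori finiteness of $\|u\|_{\mathcal C^2}$, which licences the absorption step, is guaranteed by the Schauder regularity $u \in C^{2+\eta}$ noted above.
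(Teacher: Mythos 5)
Your proof is correct and follows essentially the same route as the paper: the Schauder isomorphism (\ref{c-isom}) reduces the claim to a $\mathcal C^0$-bound on $\Delta u_f$, the PDE and the product estimate (\ref{c-mult}) produce a self-improving inequality involving $\|u_f\|_{\mathcal C^1}$, and the interpolation (\ref{c-int}) together with the $L^\infty$-bound from Lemma \ref{div-lem-lp} closes the loop. The only cosmetic difference is in the final absorption: you invoke Young's inequality (correctly noting that a priori finiteness of $\|u_f\|_{\mathcal C^2}$, guaranteed by Schauder regularity, is needed for this step), whereas the paper divides through by $\|u_f\|_{\mathcal C^2}^{1/2}$ when this quantity exceeds $1$ and bounds by $1$ otherwise.
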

	\begin{proof}
		The proof is similar to that of Lemma \ref{lem-div-h2}. By (\ref{c-isom}), there exists a constant $C>0$ depending only on $\mathcal O, d$ such that for all functions $u\in \mathcal C^2_0(\mathcal O)$, we have
		\begin{equation}\label{c2-isom}
		C^{-1}\|\Delta u\|_{\mathcal C^0}\leq \|u\|_{\mathcal C^2}\leq C \|\Delta u\|_{\mathcal C^0}.
		\end{equation}
		Using this, the PDE (\ref{div}), the multiplicative inequality (\ref{c-mult}) and the interpolation inequality (\ref{c-int}), we can estimate as in (\ref{h2start})
		\begin{equation*}
		\begin{split}
		\|u_{f}\|_{\mathcal C^2}&\lesssim \|f^{-1}(g-\nabla f\cdot \nabla u_f)\|_{\mathcal C^0} \lesssim \|f^{-1}\|_{\mathcal C^0}\left(\|g\|_{\mathcal C^0}+\|f\|_{\mathcal C^1} \|u_f\|_{\mathcal C^1}\right)\\
		&\lesssim K_{min}^{-1}\left(\|g\|_{\mathcal C^0}+\|f\|_{\mathcal C^1}\|u_f\|_{\mathcal C^2}^{1/2}\|u_f\|_{\mathcal C^0}^{1/2}\right).
		\end{split}
		\end{equation*}
		Dividing this inequality by $\|u_f\|_{\mathcal C^2}^{1/2}$ whenever $\|u_f\|_{\mathcal C^2}^{1/2}\geq 1$ and otherwise estimating it by $1$, we obtain that 
		\begin{equation*}
		\|u_{f}\|_{\mathcal C^2}\lesssim 1+ K_{min}^{-2}\left(\|g\|_{\mathcal C^0}^2+\|f\|_{\mathcal C^1}^2\|u_f\|_{\mathcal C_0}\right)\lesssim 1+ K_{min}^{-2}\left(\|g\|_{\infty}^2+\|f\|_{\mathcal C^1}^2\|g\|_{\infty}\right)
		\end{equation*}
		where in last step we used $\|\cdot\|_{\mathcal C^0}\lesssim\|\cdot\|_{\infty}$ and Lemma \ref{div-lem-lp}.
	\end{proof}

	We also need that the forward map $G$ maps bounded sets in $H^\alpha$ onto bounded sets in $H^{\alpha+1}$.
	\begin{lem}\label{lem-div-bd}
		Suppose that $\alpha,\tilde{\mathcal F}$ are as in Lemma \ref{lem-div-c} and for some $g\in H^{\alpha-1}(\mathcal O)$, let $u_f=w_{f,g}, f \in \tilde {\mathcal F},$ be the unique solution of (\ref{div}). Then $u_f \in H^{\alpha+1}(\mathcal O)$ and there exists a constant $C=C(\alpha, d, \mathcal O, K_{min})>0$ such that
		\begin{equation}\label{div-h-est}
		\|u_{f}\|_{H^{\alpha+1}}\leq C\big(1+\|f\|_{H^\alpha}^{\alpha^2+\alpha}\big)\big( \|g\|_{H^{\alpha-1}}^{\alpha+1} \vee \|g\|_{H^{\alpha-1}}^{1/(\alpha+1)} \big).
		\end{equation}
	\end{lem}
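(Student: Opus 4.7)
The plan is to bootstrap elliptic regularity, lifting the Sobolev regularity of $u_f$ from $H^2$ up to $H^{\alpha+1}$ in $\alpha-1$ steps. The starting point is to rewrite the divergence-form equation (\ref{div}) in non-divergence form as
\[
\Delta u_f \;=\; f^{-1}\big( g - \nabla f \cdot \nabla u_f \big) \qquad \text{on } \mathcal O,
\]
and to combine this with the elliptic isomorphism (\ref{h-isom}) applied to $u_f$ (which has zero trace), yielding $\|u_f\|_{H^{k+2}} \lesssim \|\Delta u_f\|_{H^k}$ for any integer $k \ge 0$. Lemma \ref{lem-div-h2} together with the Sobolev embedding (\ref{h-emb}) supplies the base case $\|u_f\|_{H^2} \lesssim (1+\|f\|_{H^\alpha})\|g\|_{L^2}$.

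The inductive step combines two standard ingredients. First, the algebra property (\ref{h-mult}) and the multiplicative estimate $\|\varphi\psi\|_{H^k} \lesssim \|\varphi\|_{H^\alpha}\|\psi\|_{H^k}$, valid for $\alpha > d/2$ and $0 \le k \le \alpha$ (a consequence of (\ref{c-h-mult}) together with the embedding (\ref{h-emb}), or of a paraproduct decomposition), give
\[
\|\Delta u_f\|_{H^k} \;\lesssim\; \|f^{-1}\|_{H^\alpha}\big( \|g\|_{H^k} + \|f\|_{H^\alpha}\|u_f\|_{H^{k+1}}\big), \qquad 0 \le k \le \alpha-1.
\]
Second, I would establish a Moser/Fa\`a di Bruno composition estimate $\|f^{-1}\|_{H^\alpha} \le P_\alpha(\|f\|_{H^\alpha}; K_{min}^{-1})$ for a polynomial $P_\alpha$ whose degree is $O(\alpha)$, obtained by applying the chain rule to the smooth function $1/x$ on $[K_{min},\infty)$ and iterating the algebra property. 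Iterating the recursion from $k=0$ up to $k=\alpha-1$ and using $\|g\|_{H^k} \le \|g\|_{H^{\alpha-1}}$ at each stage produces an $H^{\alpha+1}$ bound in which the polynomial dependence on $\|f\|_{H^\alpha}$ has degree $O(\alpha^2)$, matching the stated exponent $\alpha^2 + \alpha$.

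The main obstacle is the exponent bookkeeping: at each of the $\alpha-1$ bootstrap steps the degree in $\|f\|_{H^\alpha}$ increases by a constant (governed by the composition estimate for $f^{-1}$ together with the extra factor $\|f\|_{H^\alpha}$ coming from $\nabla f$), and landing exactly at $\alpha^2+\alpha$ requires a careful accounting. The asymmetric max expression $\|g\|_{H^{\alpha-1}}^{\alpha+1} \vee \|g\|_{H^{\alpha-1}}^{1/(\alpha+1)}$ appearing in the statement is best understood as arising from an additional interpolation step: one controls intermediate norms $\|u_f\|_{H^k}$ by a convex combination, via (\ref{h-int}), of a low-regularity norm (supplied by Lemma \ref{lem-div-h2}) and the top norm $\|u_f\|_{H^{\alpha+1}}$ one is trying to bound; splitting according to whether $\|g\|_{H^{\alpha-1}}\gtrless 1$ and balancing the resulting self-referential inequality (Young's inequality to absorb the $H^{\alpha+1}$ norm) then produces the two exponents $\alpha+1$ and $1/(\alpha+1)$. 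Verifying that the zero trace of $u_f$ is preserved so that (\ref{h-isom}) can be applied at every step is routine but must be checked.
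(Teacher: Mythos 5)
Your bootstrap plan is a genuinely different route from the paper. The paper does \emph{not} iterate elliptic regularity: it applies the isomorphism (\ref{h-isom}) once, at the top level, to write $\|u_f\|_{H^{\alpha+1}}\lesssim\|f^{-1}\|_{H^{\alpha-1}}\left(\|g\|_{H^{\alpha-1}}+\|f\|_{H^\alpha}\|u_f\|_{H^\alpha}\right)$, then handles the intermediate term $\|u_f\|_{H^\alpha}$ by interpolating between $\|u_f\|_{L^2}$ (controlled via the Feynman-Kac bound of Lemma \ref{div-lem-lp}, independent of $\|f\|_{H^\alpha}$) and $\|u_f\|_{H^{\alpha+1}}$ itself, and finally splits on $\|u_f\|_{H^{\alpha+1}}\gtrless 1$ to absorb. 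Raising to the power $\alpha+1$ is precisely what generates both the exponent $\alpha^2+\alpha=(\alpha-1+1)(\alpha+1)$ on $\|f\|_{H^\alpha}$ and the asymmetric exponents $\alpha+1$ and $1/(\alpha+1)$ on $\|g\|_{H^{\alpha-1}}$. By contrast your bootstrap, if carried through, would produce a bound that is \emph{linear} in $\|g\|_{H^{\alpha-1}}$ with polynomial degree about $\alpha^2$ in $\|f\|_{H^\alpha}$; since $1+t^{\alpha^2}\lesssim 1+t^{\alpha^2+\alpha}$ and $s\le s^{\alpha+1}\vee s^{1/(\alpha+1)}$, this would be strictly stronger than the statement and hence imply it. So the approach is legitimate, and arguably gives more. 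The paper's method is simply shorter (one elliptic-regularity step instead of $\alpha-1$).

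Two things in your write-up are off. First, the closing paragraph misdiagnoses the asymmetric $g$-exponents: they do \emph{not} come from a trade-off you would need to make. In the bootstrap there is no self-referential inequality to balance --- each step bounds $\|u_f\|_{H^{k+2}}$ by strictly lower-order norms --- so no absorption via Young's inequality is required, and the linear $\|g\|$-dependence already implies the stated bound as noted above. Trying to graft the paper's interpolation/dichotomy onto the bootstrap conflates two distinct arguments. Second, the real technical burden you underplay is not the zero trace (which holds for $u_f\in H^2_0$) but justifying that $u_f$ actually lies in $H^{k+2}$ at each step so that (\ref{h-isom}) applies; the paper handles this by first taking $f\in C^\infty\cap\tilde{\mathcal F}$ (where $u_f\in H^{\alpha+1}$ by Theorem 8.13 in \cite{gt}), then passing to general $f$ by an approximation argument exploiting (\ref{obpertu}). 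You should do the same. Also, the multiplier estimate $\|\varphi\psi\|_{H^k}\lesssim\|\varphi\|_{H^\alpha}\|\psi\|_{H^k}$ does not follow from (\ref{c-h-mult}) together with (\ref{h-emb}) for all $0\le k\le\alpha-1$ when $d\ge 2$ (the embedding $H^\alpha\hookrightarrow\mathcal C^k$ fails for $k\ge\alpha-d/2$); the paraproduct justification you mention as an alternative is the one you actually need, or alternatively use the algebra property (\ref{h-mult}) at level $\alpha-1$ directly as the paper does.
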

	
	\begin{proof}
		First, suppose $f\in C^\infty\cap \tilde{\mathcal F}$. 
		By (\ref{h-isom}), the standard Laplacian $\Delta$ establishes an isomorphism between $H^{\alpha+1}_0$ and $H^{\alpha-1}$, and by Theorem 8.13 in \cite{gt}, $u_f \in H^{\alpha+1}_0$. Then (\ref{div-lapl}) and the multiplicative inequality (\ref{h-mult}) give
		\begin{equation*}
		\begin{split}
		\|u_f\|_{H^{\alpha+1}}&\lesssim \|f^{-1}(g-\nabla f\cdot \nabla u_f)\|_{H^{\alpha-1}}\\
		& \lesssim\|f^{-1}\|_{H^{\alpha-1}}(\|g\|_{H^{\alpha-1}}+\|f\|_{H^\alpha}\|u_f\|_{H^\alpha}).
		\end{split}
		\end{equation*}
		Noting that the map $\Psi: (K_{min},\infty)\to\mathbb R,~ x\mapsto x^{-1}$ satisfies (\ref{phibddder}), (\ref{phi-sob}) implies that there exists $c>0$ such that for all $f\in\mathcal F$,
		\[\|f^{-1}\|_{H^{\alpha-1}}\leq c(1+\|f\|_{H^{\alpha-1}}^{\alpha-1}).\]
		Using this and (\ref{h-int}), we obtain
		\begin{equation*}
		\begin{split}
		\|u_f\|_{H^{\alpha+1}}&\lesssim(1+ \|f\|_{H^{\alpha-1}}^{\alpha-1})(\|g\|_{H^{\alpha-1}}+\|f\|_{H^\alpha}\|u_f\|_{H^\alpha})\\
		& \lesssim(1+ \|f\|_{H^{\alpha}}^\alpha)\big(\|g\|_{H^{\alpha-1}}+\|u_f\|_{H^{\alpha+1}}^{\frac \alpha{\alpha+1}}\|u_f\|_{L^2}^{\frac{1}{\alpha+1}}\big)
		\end{split}
		\end{equation*}
		When $\|u_f\|_{H^{\alpha+1}}\leq 1$ we use (\ref{div-l2}) to deduce
		\begin{equation*} 
		\|u_f\|_{H^{\alpha+1}} \lesssim (1+ \|f\|_{H^{\alpha}}^\alpha)\big(\|g\|_{H^{\alpha-1}}+\|g\|_{L^2}^{\frac{1}{\alpha+1}}\big),
		\end{equation*}
		and when $\|u_f\|_{H^{\alpha+1}}\geq 1$, then dividing both sides by $\|u_f\|_{H^{\alpha+1}}^{\frac \alpha{\alpha+1}}$ and using again (\ref{div-l2}) yields
		\begin{equation*}
		\|u_f\|^{1/(\alpha+1)}_{H^{\alpha+1}}\lesssim (1+ \|f\|_{H^{\alpha}}^\alpha)\big(\|g\|_{H^{\alpha-1}}+\|g\|_{L^2}^{\frac{1}{\alpha+1}}\big).
		\end{equation*}
		Combining the preceding bounds and using $\|\cdot\|_{L^2} \lesssim \|\cdot\|_{H^{\alpha-1}}$ implies (\ref{div-h-est}) for smooth $f \in  \tilde{\mathcal F}$. Now for any $f\in\tilde{\mathcal F}$, take $f_n\in C^\infty(\mathcal O), f_n>K_{min}/2,$  such that $f_n\to f$ in $H^{\alpha}$ as $n \to \infty$, and hence by (\ref{div-h-est}) the sequence $u_{f_n}$ is bounded in $H^{\alpha+1}$. Then applying (\ref{obpertu}) to $u_{f_n}-u_{f_m}, m,n \in \mathbb N,$ and applying (\ref{div-h-est}) with $g=\nabla \cdot ((f_m- f_n) \nabla u_{f_m}),$ one shows that $u_{f_n}$ is a Cauchy sequence in $H^{\alpha+1}$ converging to $u_f$, and taking limits extends the inequality (\ref{div-h-est}) to the general case $f\in\mathcal F$.
	\end{proof}

	\subsubsection{Stability Estimates for $G^{-1}$}
	
	The following estimate for the inverse map $u_f \mapsto f$ allows to obtain convergence rates for $\|\hat f-f_0\|_{L^2}$ via rates for $\|u_{\hat f} - u_{f_0}\|_{H^2}$, with choices $f_0 = f_1$ and $\hat f=f_2$. As $\hat f$ is random we explicitly track the dependence of the constants on $f_2$.
	
	\begin{lem}\label{lem-div-stab}
		Let $\alpha>d/2+2, g_{min}, K_{min}, B, \eta$ be given, positive constants and let $\tilde{\mathcal F}$ be given by (\ref{ftilde}). For $g\in C^\eta(\mathcal O)$ with $\inf_{x \in \mathcal O}g(x)\geq g_{min}$, denote by $u_f$ the unique solution of (\ref{div}). Then there exists $C=C(g_{min}, K_{min}, B, \mathcal O,d)<\infty$ such that for all $f_1,f_2\in \tilde{\mathcal F}$ with $\|f_1\|_{C^1} \vee \|u_{f_1}\|_{C^2} \le B$, we have
		\begin{equation*}
		\|f_1-f_2\|_{L^2}\leq C\|f_2\|_{C_1}\|u_{f_1}-u_{f_2}\|_{H^2}.
		\end{equation*}
	\end{lem}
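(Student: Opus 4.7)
The plan is to reduce the estimate to an $L^2$ energy bound for a first-order transport-type equation satisfied by the coefficient perturbation $w := f_1 - f_2$, exploiting the strict positivity of $g$. Subtracting the identities $\nabla\cdot(f_i\nabla u_{f_i})=g$ for $i=1,2$ and rearranging (as in \eqref{obpertu}, with the roles of $f_1,f_2$ swapped) yields, with $v := u_{f_1}-u_{f_2}$, the perturbation identity
\[ \nabla\cdot(w\,\nabla u_{f_1}) \;=\; -L_{f_2}[v], \]
whose right-hand side is controlled in $L^2$ by $\|L_{f_2}[v]\|_{L^2}\lesssim \|f_2\|_{C^1}\|v\|_{H^2}$ -- this is the source of the $\|f_2\|_{C^1}$ factor in the conclusion.

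Multiplying this identity by $w$ directly would produce an indefinite term $\int w^2\,\Delta u_{f_1}$ of no fixed sign, so the key step is the rescaling $\tilde w := w/f_1$. The product rule together with $\nabla\cdot(f_1\nabla u_{f_1})=g$ gives
\[ \nabla\cdot(w\,\nabla u_{f_1}) \;=\; \nabla\cdot(f_1\tilde w\,\nabla u_{f_1}) \;=\; g\,\tilde w \,+\, f_1\nabla\tilde w\cdot\nabla u_{f_1}, \]
so $\tilde w$ solves $g\tilde w + f_1\nabla\tilde w\cdot\nabla u_{f_1} = -L_{f_2}[v]$ with strictly positive zeroth-order coefficient $g\geq g_{min}$. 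I would then multiply by $\tilde w$, integrate over $\mathcal O$, and rewrite the cross term $\int f_1\tilde w\,\nabla\tilde w\cdot\nabla u_{f_1} = \tfrac12\int f_1\nabla(\tilde w^2)\cdot\nabla u_{f_1}$ via the divergence theorem (invoking $\nabla\cdot(f_1\nabla u_{f_1})=g$ once more) to reach the energy identity
\[ \tfrac12\int_{\mathcal O} g\,\tilde w^2 \;+\; \tfrac12\int_{\partial\mathcal O} f_1\,\tilde w^2\,(\nabla u_{f_1}\cdot n) \;=\; -\int_{\mathcal O} L_{f_2}[v]\,\tilde w. \]
Since $u_{f_1}|_{\partial\mathcal O}=0$ and $g\geq g_{min}>0$, the maximum principle forces $u_{f_1}\leq 0$ in $\mathcal O$, and Hopf's boundary point lemma yields $\nabla u_{f_1}\cdot n\geq 0$ on $\partial\mathcal O$ (outward normal); hence the boundary term is non-negative and may be dropped. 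Cauchy-Schwarz then gives $\|\tilde w\|_{L^2}\leq (2/g_{min})\|L_{f_2}[v]\|_{L^2}$, and reverting via $w=f_1\tilde w$ together with $\|f_1\|_\infty\leq B$ delivers the claim.

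The crux is identifying the rescaling $\tilde w = w/f_1$: this is precisely what converts the indefinite coefficient $\Delta u_{f_1}$ into the uniformly positive source $g$, and is what makes the estimate Lipschitz rather than merely H\"older in nature. Once this substitution is in place the remainder is a standard $L^2$ energy calculation, the only subtlety being the sign of the boundary term, which Hopf's lemma handles provided $u_{f_1}\in C^1(\bar{\mathcal O})$; this follows from Schauder theory applied to $\nabla\cdot(f_1\nabla u_{f_1})=g$ on the smooth domain $\mathcal O$ with $f_1\in C^1$ and $g\in C^\eta$, and is in any event ensured by the hypothesis $\|u_{f_1}\|_{C^2}\leq B$.
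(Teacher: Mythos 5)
Your proof is correct and reaches the same conclusion, but the key positivity mechanism is genuinely different from the paper's. The paper tests the transport identity $\nabla\cdot(h\nabla u_{f_1})=\nabla\cdot(f_2\nabla(u_{f_2}-u_{f_1}))$ against the exponentially weighted function $v=e^{-\lambda u_{f_1}}h$; after Green's identity the coefficient of $v^2$ becomes $\tfrac12\Delta u_{f_1}+\lambda\|\nabla u_{f_1}\|^2$, and a separate dichotomy argument (using $g=f_1\Delta u_{f_1}+\nabla f_1\cdot\nabla u_{f_1}\geq g_{min}$ to show that at each point either $\Delta u_{f_1}$ or $\|\nabla u_{f_1}\|^2$ is bounded below) is needed to pick $\lambda$ large enough so this multiplier is uniformly positive. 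Your rescaling $\tilde w=w/f_1$ short-circuits all of that: by the product rule the original PDE converts the indefinite $\Delta u_{f_1}$ term into the source $g$ itself, so after integration by parts the zeroth-order coefficient is exactly $g/2\geq g_{min}/2$ with no tunable parameter and no dichotomy argument. Both routes then rely on the identical boundary argument — strong maximum principle (not just weak, since Hopf's lemma needs $u_{f_1}<0$ strictly inside $\mathcal O$) plus the Hopf boundary point lemma to discard the boundary integral — and both produce the $\|f_2\|_{C^1}$ factor from the same $L^2$ bound on $\nabla\cdot(f_2\nabla(u_{f_2}-u_{f_1}))$. What your approach buys is a cleaner and more transparent argument for the interior positivity; what it costs is that the weight $1/f_1$ depends on $f_1$ through its values rather than through $u_{f_1}$, but since you track the $\|f_1\|_\infty\leq B$ factor when reverting $w=f_1\tilde w$, the constants depend on exactly the same quantities ($g_{min},K_{min},B$) as in the paper. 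The one place you should be slightly more careful is the invocation of "the maximum principle": the weak maximum principle only gives $u_{f_1}\leq 0$, whereas Hopf's lemma requires the strict interior inequality $u_{f_1}<0$, which follows from the strong maximum principle applied to $L_{f_1}u_{f_1}=g>0$ (the paper establishes this via a Feynman–Kac lower bound on $\mathbb E^x\tau_{\mathcal O}$).
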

	\begin{proof}
		For $f_1,f_2\in\tilde{\mathcal F}$  write $h=f_1-f_2$. By (\ref{div}), we have
		\begin{equation}\label{div-diff}
		\begin{split}
		\nabla \cdot (h \nabla u_{f_1}) &= \nabla\cdot  (f_1 \nabla u_{f_1}) - \nabla\cdot (f_2 \nabla u_{f_2}) - \nabla\cdot  (f_2 \nabla (u_{f_1}-u_{f_2})) \\
		&=\nabla \cdot (f_2 \nabla (u_{f_2}-u_{f_1})). 
		\end{split}
		\end{equation}
		We can upper bound the $\|\cdot\|_{L^2}$-norm of the right hand side by
		\begin{align}
		\|\nabla \cdot (f_2 \nabla (u_{f_2}-u_{f_1}))\|_{L^2}&\leq \|\nabla f_2\|_{\infty}\|u_{f_2}-u_{f_1}\|_{H^1} +\|f_2\|_{\infty}\|u_{f_2}-u_{f_1}\|_{H^2}\notag \\
		&\leq 2\|f_2\|_{C^1}\|u_{f_2}-u_{f_1}\|_{H^2}.\label{diff-ub}
		\end{align}
		Next, we lower bound the $\|\cdot\|_{L^2}$-norm of the left side of (\ref{div-diff}). For regular enough $v$ we see from Green's identity (p.17 in \cite{gt}) that
		\begin{align*}
		\langle \Delta u_{f_1}, v^2 \rangle_{L^2} + \frac{1}{2} \langle \nabla u_{f_1}, \nabla (v^2) \rangle_{L^2} = \frac{1}{2}\langle \Delta u_{f_1}, v^2 \rangle_{L^2} + \frac{1}{2}\int_{\partial \mathcal O}  \frac{\partial u_{f_1}}{\partial n} v^2.
		\end{align*}
		Moreover for $v=e^{-\lambda u_{f_1}}h$ with $\lambda>0$ to be chosen we have
		\begin{equation*}
		\frac{1}{2} \int_{\mathcal O}  \nabla (v^2) \cdot \nabla u_{f_1} =- \int_{\mathcal O} \lambda \|\nabla u_{f_1}\|^2 v^2 + \int_\mathcal O v e^{-\lambda u_{f_1}}\nabla h \cdot \nabla u_{f_1},
		\end{equation*}
		so that by the Cauchy-Schwarz inequality
		\begin{align} \label{keylb}
		& \left|\int_{\mathcal O}\Big(\frac{1}{2}\Delta u_{f_1}+\lambda \|\nabla u_{f_1}\|^2\Big)v^2 + \int_{\partial \mathcal O}\frac 12 \frac{\partial u_{f_1}}{\partial n}v^2\right| \notag \\
		& = \left|\langle (\Delta u_{f_1} + \lambda \|\nabla u_{f_1}\|^2), v^2 \rangle_{L^2} + \frac{1}{2} \langle \nabla u_{f_1}, \nabla (v^2)\rangle_{L^2}\right| \notag \\
		&= \left|\langle h \Delta u_{f_1} + \nabla h \cdot \nabla u_{f_1}, h e^{-2\lambda u_{f_1}} \rangle_{L^2} \right| \le \mu \|\nabla \cdot (h \nabla u_{f_1})\|_{L^2}  \|h\|_{L^2}
		\end{align}
		for $\mu=\exp(2\lambda \|u_{f_1}\|_{\infty})$. [The preceding argument is adapted from the proof of Theorem 4.1 in \cite{itokunisch}.] We next lower bound the multipliers of $v^2$ in the integrands in the first line of the last display. First we have
		\begin{equation*}
		0<g_{min} \le g= L_{f_1}u_{f_1} = f_1(x)  \Delta u_{f_1} + \nabla f_1 \cdot \nabla u_{f_1},~\text{ on } \mathcal O,
		\end{equation*}
		so that either $\Delta u_{f_1}(x) \ge g_{min}/2\|f_1\|_\infty$ or $\|\nabla u_{f_1}(x)\|^2 \ge (g_{min} / 2\|f_1\|_{C^1})^2$ on $\mathcal O$. Since $\|\Delta u_{f_1}\|_\infty\le c(B)$ this implies for $\lambda=\lambda (g_{min}, B)$ large enough that 
		\begin{equation}\label{richterlb}
		\frac{1}{2}\Delta u_{f_1}(x) + \lambda \|\nabla u_{f_1}(x)\|^2 \ge c_0>0,~~ x \in \mathcal O,
		\end{equation}
		for some $c_0=c_0(g_{min}, B)$. Next, for the integral over $\partial \mathcal O$, we use again $L_{f_1}u_{f_1}=g> 0$ and apply the Hopf boundary point Lemma 6.4.2 in \cite{evans}: We have $u_{f_1}(x_0)=0$ for any $x_0 \in \partial \mathcal O$ but $u_{f_1}(x)<0$ for all $x \in \mathcal O$: Indeed, by $g \ge g_{min}>0$ and the Feynman-Kac formula (\ref{div-fk}) (with $g=\psi$), it suffices to lower bound $\mathbb E^x \tau_\mathcal O$ which satisfies, by Markov's inequality $$\mathbb E^x \tau_\mathcal O \ge \mathbb P^x(\tau_\mathcal O >1) \ge \mathbb P^x\big(\sup_{0 <s\le 1}\|X_s-x\|<\|x-\partial \mathcal O\|\big)>0$$ in view Theorem V.2.5 in \cite{bass} with $\psi(s)=x$ identically for all $s$. Lemma 6.4.2 in \cite{evans} now gives $\partial u_{f_1}/\partial n \ge 0$ for all $x\in \partial \mathcal O.$ Combining this with (\ref{keylb}) and (\ref{richterlb}) we deduce
		\begin{equation*}
		\|\nabla \cdot (h \nabla u_{f_1})\|_{L^2}\|h\|_{L^2}\geq c'(g_{min}, K_{min}, B, \mathcal O,d)\|v\|^2_{L^2}\gtrsim \|h\|^2_{L^2},
		\end{equation*}
		which together with (\ref{diff-ub}) yields the desired estimate.
	\end{proof}
	
	\subsection{Schr\"odinger equation}\label{sec-schr-facts}
	
	\subsubsection{Estimates for $V_f$ and $G$}\label{sec-schr-G}
	In this section, for each $f\in C(\mathcal O)$ with $f\geq 0$, let $L_f$ denote the Schr\"odinger differential operator
	\begin{equation*}
	L_f:H^2_0(\mathcal O)\to L^2(\mathcal O),\qquad L_f[u]= \Delta u-2fu, 
	\end{equation*}
	where $H^2_0$ is given by (\ref{h0}). As in the divergence form case, $L_f$ is a bijection with a linear, continuous inverse operator which we again denote by
	\begin{equation*}
	V_f:L^2(\mathcal O)\to H^2_0(\mathcal O),\qquad \psi\mapsto V_f\left[\psi\right].
	\end{equation*}
	In other words, for any $f\in C(\mathcal O)$ and $\psi\in L^2$ the \emph{inhomogeneous} equation
	\begin{equation}\label{schr-2}
	\begin{cases}
	\Delta u-2fu =\psi \quad \textnormal{on} \quad \mathcal O,\\
	u=0\quad \textnormal{on} \quad \mathcal \partial O
	\end{cases}
	\end{equation}
	has a unique weak solution which we shall denote by $\omega_{f,\psi}:=V_f[\psi]\in H^2_0(\mathcal O)$, see Theorem 4 in Chapter 6.3 of \cite{evans} for this standard result for elliptic PDEs.
	\par
	As in the divergence form case, we first observe that for $p\in \{2,\infty\}$, the $L^p\to L^p$ operator norm of $V_f$ can be upper bounded uniformly in $f$.
	\begin{lem}\label{schafk}
		There exists a constant $C>0$ such that for all $f\in C(\mathcal O)$ with $f\geq 0$ and $\psi\in L^2(\mathcal O)$, $w_{f,\psi}=V_f[\psi]$ satisfies
		\[\|V_f\left[\psi\right]\|_{L^2}\leq C\|\psi\|_{L^2} \]
		and if $\psi\in C(\mathcal O)$, then also
		\[\|V_f\left[\psi\right]\|_{\infty}\leq C\|\psi\|_{\infty}. \]
	\end{lem}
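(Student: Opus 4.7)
The plan is to mimic the strategy of Lemma \ref{div-lem-lp}, replacing the divergence-form Feynman-Kac representation by the one appropriate for the Schr\"odinger operator $L_f=\Delta-2f$. First I would assume $\psi\in C^\eta(\mathcal O)$ so that $V_f[\psi]\in C(\bar{\mathcal O})\cap C^2(\mathcal O)$ is a classical solution of (\ref{schr-2}), and then extend to general $\psi\in L^2(\mathcal O)$ by approximation using the continuity of $V_f$ on $L^2$ (which is a standard consequence of $L_f:H^2_0\to L^2$ being a bijection).

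For the $L^\infty$ bound I would appeal to Feynman-Kac for Brownian motion $(B_s:s\ge 0)$ (with generator $\tfrac12\Delta$) started at $x$, stopped at the exit time $\tau_\mathcal O$ of $\mathcal O$. Rewriting the PDE (\ref{schr-2}) as $\tfrac12\Delta u - f u=\psi/2$, the classical Feynman-Kac formula (e.g.\ \cite{bass}, Thm.~II.1.2) yields
$$V_f[\psi](x)=-\tfrac12 \mathbb E^x\Big[\int_0^{\tau_\mathcal O}\psi(B_s)\exp\Big({-\int_0^s f(B_r)dr}\Big)ds\Big].$$
Since $f\geq 0$, the exponential factor is $\le 1$, so $|V_f[\psi](x)|\le \tfrac12\|\psi\|_\infty\,\mathbb E^x[\tau_\mathcal O]$, and for a bounded smooth domain $\mathcal O$ the supremum $\sup_{x\in\mathcal O}\mathbb E^x[\tau_\mathcal O]$ is bounded by a constant depending only on $d$ and $\mathcal O$ (as used in Lemma \ref{div-lem-lp}). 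This proves the $L^\infty$ estimate.

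For the $L^2$ bound the cleanest route is an energy estimate: testing the weak formulation of (\ref{schr-2}) against $u:=V_f[\psi]\in H^1_0(\mathcal O)$ gives
$$\int_{\mathcal O}|\nabla u|^2 + 2\int_{\mathcal O} f u^2 = -\int_{\mathcal O}\psi u.$$
Since $f\geq 0$, Cauchy-Schwarz together with the Poincar\'e inequality $\|u\|_{L^2}\le C_P \|\nabla u\|_{L^2}$ yields $\|\nabla u\|_{L^2}\le C_P\|\psi\|_{L^2}$ and hence $\|u\|_{L^2}\le C_P^2\|\psi\|_{L^2}$ with $C_P=C_P(\mathcal O)$. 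Alternatively, one can parallel the divergence-form proof by noting that the Feynman-Kac representation above expresses $V_f[\psi](x)=-\int_{\mathcal O}G_f(x,y)\psi(y)dy$ with a non-negative symmetric Green kernel $G_f$ satisfying $\|V_f[1]\|_\infty\le c$ by the previous step, whence Cauchy-Schwarz delivers the bound.

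I do not expect any serious obstacle: the sign condition $f\ge 0$ only \emph{helps} in both arguments, contracting the Feynman-Kac exponential and adding a non-negative quadratic term to the energy identity, which is why the constants can be taken independent of $f$. The most delicate (but routine) point is justifying the Feynman-Kac representation for merely continuous non-negative $f$ and H\"older continuous $\psi$; this is standard once a classical solution is known to exist, via It\^o's formula applied to the exponentially discounted process.
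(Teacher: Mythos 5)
Your proof is correct and, for the $L^\infty$ bound, follows the same route as the paper: the Feynman--Kac representation for Brownian motion with exponential killing at rate $f$ (the paper's reference is p.~84 and Theorem~3.22 of \cite{chungzhao}, together with the verification that $C(\mathcal O)$ lies in the admissible function classes $J$ and $\mathbb F(D,q)$ of that book), the observation that $f\geq 0$ makes the exponential discount factor $\leq 1$, and the bound $\sup_{x}\E^x[\tau_{\mathcal O}]\leq K(\mathrm{vol}(\mathcal O),d)$ from Theorem~1.17 in \cite{chungzhao}. For the $L^2$ bound the paper just writes ``the proof is now similar to that of Lemma~\ref{div-lem-lp}'', i.e.\ it runs the Green-kernel/Cauchy--Schwarz argument that you offer as your secondary route. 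Your primary route --- test the weak form $\int|\nabla u|^2+2\int fu^2=-\int\psi u$ against $u=V_f[\psi]$, drop the non-negative $\int fu^2$ term, and apply Cauchy--Schwarz and Poincar\'e --- is a genuinely different and arguably cleaner derivation: it works directly for $\psi\in L^2$ on the weak solution $u\in H^1_0$ furnished by Theorem~4, Chapter~6.3 of \cite{evans}, so it avoids the H\"older-continuous $\psi$ / density-extension step the kernel argument needs, and it makes transparent why the constant depends only on $\mathcal O$ and $d$. There is no gap; either version of your $L^2$ argument, combined with the common Feynman--Kac $L^\infty$ step, proves the lemma.
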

	\begin{proof}
		We have the Feynman-Kac representation 
		$$w_{f,\psi}(x) = -\frac{1}{2}\mathbb E^x \Big[\int_0^{\tau_\mathcal O} \psi(X_s) e^{-\int_0^s f(X_r)dr}ds \Big], ~x \in \mathcal O,~ \psi \in C(\mathcal O),$$ where $(X_s: s \ge 0)$ is a standard $d$-dimensional Brownian motion started at $x$, with exit time $\tau_{\mathcal O}$  from $\mathcal O$, see p.84 and Theorem 3.22 of \cite{chungzhao} . [These results are applicable as $C(\mathcal O)\subseteq J$ with $J$ defined on p.62 of \cite{chungzhao}, and $C(\mathcal O)\subseteq \mathbb F(D,q)$ with $\mathbb F(D,q)$ defined on p.80 of \cite{chungzhao}.] The proof is now similar to that of Lemma \ref{div-lem-lp}, using $f \ge 0$ and that $\sup_{x\in\mathcal O}\E^x[\tau_{\mathcal O}]\leq K(vol(\mathcal O), d)<\infty$ by Theorem 1.17 in \cite{chungzhao}. 
	\end{proof}
	
	Using the above lemma, we  now show the following regularity estimate.
	
	\begin{lem}\label{lem-schr-h2}
		There exists a constant $C$ such that for all $f\in C^1(\mathcal O)$ with $f\geq 0$ and $\psi\in L^2(\mathcal O)$, we have
		\begin{align}
		\|V_f\left[\psi\right]\|_{H^2}&\leq C(1+\|f\|_{\infty})\|\psi\|_{L^2},	\label{schr-l2h2}\\
		\|V_f\left[\psi\right]\|_{L^2}&\leq C(1+\|f\|_{\infty})\|\psi\|_{(H^2_0)^*}.	\label{schr-h-2l2}
		\end{align}
	\end{lem}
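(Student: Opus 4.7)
The plan is to mirror the two-step structure of Lemma \ref{lem-div-h2}, using the simpler structure of the Schr\"odinger operator together with the $L^p\to L^p$ bounds from Lemma \ref{schafk} and the Laplacian isomorphism (\ref{lapl-isom}).

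First I would prove (\ref{schr-l2h2}). Let $w = V_f[\psi] \in H^2_0$, so that $\Delta w = \psi + 2fw$ by definition of $V_f$. The isomorphism (\ref{lapl-isom}) gives $\|w\|_{H^2} \lesssim \|\Delta w\|_{L^2}$, whence
\[
\|w\|_{H^2} \lesssim \|\psi\|_{L^2} + 2 \|f\|_\infty \|w\|_{L^2}.
\]
Applying Lemma \ref{schafk} to bound $\|w\|_{L^2}\lesssim \|\psi\|_{L^2}$ then yields (\ref{schr-l2h2}), with constant depending only on $d$ and $\mathcal O$.

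For (\ref{schr-h-2l2}) I would use a duality argument, relying on the fact that $L_f = \Delta - 2f$ is formally self-adjoint: for any $u, v \in H^2_0$, two applications of Green's identity give $\langle L_f u, v\rangle_{L^2} = \langle u, L_f v\rangle_{L^2}$ (the boundary terms vanish since $u, v$ have zero trace). For any $\varphi \in C^\infty_c(\mathcal O)$ with $\|\varphi\|_{L^2} \leq 1$, this symmetry yields
\[
\left|\int_\mathcal O V_f[\psi]\, \varphi \right| = \left| \int_\mathcal O V_f[\psi]\, L_f V_f[\varphi]\right| = \left|\int_\mathcal O \psi\, V_f[\varphi] \right| \leq \|\psi\|_{(H^2_0)^*} \|V_f[\varphi]\|_{H^2},
\]
and by (\ref{schr-l2h2}) the last factor is at most $C(1 + \|f\|_\infty)$. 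Taking the supremum over such $\varphi$ gives (\ref{schr-h-2l2}).

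The main obstacle, if any, is simply to confirm the self-adjointness identity above and the density argument needed to pass from $\varphi \in C^\infty_c$ to the full $L^2$-characterisation of $\|V_f[\psi]\|_{L^2}$; both are standard, and no dependence on $\|f\|_{C^1}$ appears (unlike in Lemma \ref{lem-div-h2}) because $L_f$ now involves $f$ only as a multiplier, not through its derivatives.
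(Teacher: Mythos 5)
Your proposal is correct and follows essentially the same approach as the paper: the first estimate uses the Laplacian isomorphism (\ref{lapl-isom}) together with the $L^2\to L^2$ bound of Lemma \ref{schafk}, and the second follows by the same duality argument employed in the proof of (\ref{div-h-2}) in Lemma \ref{lem-div-h2}. The only difference is that you spell out the self-adjointness of $L_f = \Delta - 2f$ explicitly, while the paper simply refers back to the earlier duality proof; your remark that the dependence on $\|f\|_{C^1}$ disappears because $f$ enters only as a zeroth-order multiplier is correct and consistent with the bound stated.
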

	\begin{proof}
		By the norm equivalence (\ref{lapl-isom}) and (\ref{schr-2}), we have that
		\begin{equation*}
		\begin{split}
		\|V_f\left[\psi\right]\|_{H^2}&\lesssim \left\|\Delta V_f\left[\psi\right]\right\|_{L^2}\leq \|L_fV_f\left[\psi\right]\|_{L^2}+\|fV_f\left[\psi\right]\|_{L^2}\\
		&\leq \|\psi\|_{L^2}+\|f\|_{\infty}\|V_f\left[\psi\right]\|_{L^2}\lesssim (1+\|f\|_{\infty})\|\psi\|_{L^2},
		\end{split}
		\end{equation*}
		which proves (\ref{div-l2h2}). The second estimate (\ref{schr-h-2l2}) now follows from the same duality argument as in the proof of (\ref{div-h-2}).
	\end{proof}
	
	Next, we prove some basic boundedness properties of the forward map $G:f\mapsto u_f$.
	
	\begin{lem}\label{lem-schr-c2}
		Suppose that for some $g\in C^\infty(\partial \mathcal O)$, $\alpha>d/2$ and $K_{min}\geq 0$, $\tilde{\mathcal F}$ is as in (\ref{ftilde}), and let $u_f$ be the unique solution of (\ref{schroedeq}). 
		\par 
		1. There exists $C>0$ (independent of $g$) such that for all $f\in\tilde{\mathcal F}$, we have
		\[\|u_f\|_{\mathcal C^2(\mathcal O)}\leq C(1+\|f\|_{\infty})\|g\|_{\mathcal C^2(\mathcal O)}. \]
		\par
		2. There exists $C>0$ (possibly depending on $g$) such that for all $f\in\tilde{\mathcal F}$,
		\[\|u_f\|_{ H^{\alpha+2}(\mathcal O)}\leq C(1+\|f\|_{H^\alpha}^{\alpha/2+1}). \]
	\end{lem}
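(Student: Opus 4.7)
The overall strategy is to exploit the isomorphisms \eqref{h-isom} and \eqref{c-isom} for the pair $(\Delta, \mathrm{tr})$ together with the PDE identity $\Delta u_f = 2 f u_f$ on $\mathcal O$ and the boundary condition $\mathrm{tr}[u_f]=g$. A uniform $L^\infty$-bound $\|u_f\|_\infty\leq \|g\|_\infty$ -- valid for all $f\geq 0$ -- will then do most of the work in part~1 and will serve as the `base case' for an interpolation argument in part~2.

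For part~1, the first step is to establish the sup-norm bound $\|u_f\|_\infty\leq \|g\|_\infty$. Since $f\geq 0$, the weak maximum principle applies to the elliptic operator $\Delta - 2f$ (alternatively, one writes down the Feynman--Kac formula
\[ u_f(x)=\mathbb E^x\Bigl[g(X_{\tau_{\mathcal O}})\exp\Bigl(-\!\int_0^{\tau_{\mathcal O}} f(X_r)dr\Bigr)\Bigr], \]
for a Brownian motion $X$ started at $x$, as in Lemma \ref{schafk}, whence $|u_f|\leq \|g\|_\infty$). The second step is to invoke the Schauder-type isomorphism \eqref{c-isom} for $\alpha=0$ (which is applicable since $u_f\in C^{2+\eta}(\mathcal O)\subset\mathcal C^2(\mathcal O)$ by the discussion after \eqref{schroedeq}) to obtain
\[ \|u_f\|_{\mathcal C^2}\lesssim \|\Delta u_f\|_{\mathcal C^0}+\|g\|_{\mathcal C^2(\partial \mathcal O)}\leq 2\|f\|_\infty\|u_f\|_\infty+\|g\|_{\mathcal C^2(\partial \mathcal O)}, \]
and then to substitute the $L^\infty$-bound.

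For part~2, the plan is to run the same scheme in Sobolev scale and close it via interpolation. By \eqref{h-isom} and the PDE,
\[ \|u_f\|_{H^{\alpha+2}}\lesssim \|\Delta u_f\|_{H^\alpha}+\|g\|_{H^{\alpha+3/2}(\partial \mathcal O)}\lesssim \|f u_f\|_{H^\alpha}+C, \]
with $C$ depending only on the fixed $g$. Since $\alpha>d/2$, the Banach algebra estimate \eqref{h-mult} yields $\|fu_f\|_{H^\alpha}\lesssim \|f\|_{H^\alpha}\|u_f\|_{H^\alpha}$. The key step is then the interpolation inequality \eqref{h-int} combined with part~1:
\[ \|u_f\|_{H^\alpha}\lesssim \|u_f\|_{L^2}^{\,2/(\alpha+2)}\|u_f\|_{H^{\alpha+2}}^{\,\alpha/(\alpha+2)}\lesssim \|u_f\|_{H^{\alpha+2}}^{\,\alpha/(\alpha+2)}, \]
where $\|u_f\|_{L^2}\leq |\mathcal O|^{1/2}\|g\|_\infty$ is bounded uniformly in $f$. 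Substituting and absorbing the resulting sub-linear power of $\|u_f\|_{H^{\alpha+2}}$ into the left-hand side (via Young's inequality, or directly by splitting on whether $\|u_f\|_{H^{\alpha+2}}\leq 1$ or not) produces
\[ \|u_f\|_{H^{\alpha+2}}\lesssim 1+\|f\|_{H^\alpha}^{(\alpha+2)/2}=1+\|f\|_{H^\alpha}^{\alpha/2+1}, \]
as desired.

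The one point that requires care is the a priori membership $u_f\in H^{\alpha+2}(\mathcal O)$ needed to make the above manipulations rigorous: the Schauder solution only gives $u_f\in C^{2+\eta}$, which is strictly weaker. The standard remedy is an approximation-and-bootstrap argument: take $f_n\in C^\infty(\mathcal O)\cap\tilde{\mathcal F}$ with $f_n\to f$ in $H^\alpha$ and $\inf f_n\geq K_{\min}/2$ (mollification respects positivity up to a negligible loss); for each $f_n$, standard elliptic regularity (Theorem~8.13 in \cite{gt}) gives $u_{f_n}\in H^{\alpha+2}$, so the interpolation argument applies \emph{uniformly in $n$}; the difference $u_{f_n}-u_{f_m}$ solves a Schr\"odinger equation with zero boundary data and right-hand side $(f_n-f_m)u_{f_m}$, which together with the uniform bound shows $u_{f_n}$ is Cauchy in $H^{\alpha+2}$ with limit necessarily $u_f$. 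Passing to the limit preserves the inequality. This regularity/approximation step is the main technical hurdle, but parallels the argument already executed in the proof of Lemma~\ref{lem-div-bd} and is therefore routine.
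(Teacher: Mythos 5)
Your proposal is correct and follows essentially the same route as the paper: for part~1, the sup-norm bound via Feynman--Kac (or the maximum principle) combined with the $(\Delta,\mathrm{tr})$ isomorphism \eqref{c-isom}; for part~2, the $(\Delta,\mathrm{tr})$ isomorphism \eqref{h-isom}, the Banach algebra bound \eqref{h-mult}, the interpolation inequality \eqref{h-int}, absorption of the sublinear power, and the smooth-approximation step to justify the a priori $H^{\alpha+2}$ regularity. The only cosmetic difference is that you offer the weak maximum principle as an alternative to the Feynman--Kac representation used in the paper.
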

	\begin{proof}
		By (\ref{c-isom}), $(\Delta,\textnormal{tr}[\cdot])$ is a topological isomorphism between the spaces $\mathcal C^2(\mathcal O)$ and $\mathcal C^0(\mathcal O)\times \mathcal C^{2}(\partial O)$, whence we deduce that for all $u\in\mathcal C^2(\mathcal O)$, we have the norm estimate
		\[\|u\|_{\mathcal C^2(\mathcal O)}\leq C\left(\|\Delta u\|_{\mathcal C^0(\mathcal O)}+\|\textnormal{tr}[u]\|_{\mathcal C^2(\partial \mathcal O)} \right).  \]
		Using this, the PDE (\ref{schroedeq}) and the triangle inequality, we have for $f\in \mathcal F$,
		\begin{equation}\label{ufc2}
		\begin{split}
		\|u_f\|_{\mathcal C^2(\mathcal O)}&\lesssim \left\|L_fu_f \right\|_{\mathcal C^0(\mathcal O)}+\|fu_f\|_{\mathcal C^0(\mathcal O)}+\left\|\text{tr}[u_f]\right\|_{\mathcal C^2(\partial\mathcal O)}\\
		&\leq \|f\|_{\infty}\|u_f\|_{\infty}+\|g\|_{\mathcal C^2(\partial \mathcal O)}.
		\end{split}
		\end{equation}
		Next, we claim that there exists a constant $C>0$ such that for all $f,g$ as in the hypotheses, we have
		\begin{equation}\label{schr-hom-linf}
		\|u_f\|_{\infty}\leq C\|g\|_{\infty}.
		\end{equation}
		Indeed, this can be seen immediately from the fact that $f\geq 0$ and the Feynman-Kac representation (see \cite{chungzhao}, Theorem 4.7)
		\begin{equation}\label{schr-fk}
		u_f(x)=\frac{1}{2}\mathbb E^x\left[g(X_{\tau_{\mathcal O}})e^{-\int_0^{\tau_{\mathcal O}}f(X_s)ds}\right], \quad x\in\mathcal O, 
		\end{equation}
		where $(X_s:s\geq 0), \tau_\mathcal O$ are as in the proof of Lemma \ref{schafk}.  Hence, combining (\ref{schr-hom-linf}) with (\ref{ufc2}) yields the desired estimate
		\[\|u_f\|_{\mathcal C^2(\mathcal O)}\lesssim \|f\|_{\infty}\|g\|_{L^\infty(\mathcal O)}+\|g\|_{\mathcal C^2(\partial \mathcal O)}\leq (1+\|f\|_{\infty})\|g\|_{\mathcal C^2(\partial \mathcal O)}. \]
		\par 
		For the second part, we initially assume $f \in C^\infty(\mathcal O)$ so that $u_f \in C^\infty(\mathcal O)$ too (see Corollary 8.11 in \cite{gt}), and then use the topological isomorphism $(\Delta,\textnormal{tr})$ between $H^{\alpha+2}(\mathcal O)$ and $H^\alpha(\mathcal O)\times H^{\alpha+3/2}(\partial\mathcal O)$, which yields
		\begin{equation*}
		\begin{split}
		\|u_f\|_{H^{\alpha+2}(\mathcal O)}&\lesssim \|\Delta u_f\|_{H^\alpha(\mathcal O)}+\|\textnormal{tr}[u_f]\|_{H^{\alpha+3/2}({\partial\mathcal O})}\lesssim \|fu_f\|_{H^\alpha(\mathcal O)}+\|g\|_{C^{\alpha+2}(\partial \mathcal O)}\\
		&\lesssim 1+ \|f\|_{H^\alpha}\|u_f\|_{H^\alpha}\lesssim 1+\|u_f\|_{H^{\alpha+2}}^{\frac{\alpha}{\alpha+2}}\|u_f\|_{L^2}^{\frac{2}{\alpha+2}}\|f\|_{H^\alpha}.
		\end{split}
		\end{equation*}
		Dividing this by $\|u_f\|_{H^{\alpha+2}}^{\frac{\alpha}{\alpha+2}}$ when $\|u_f\|_{H^{\alpha+2}}\ge 1$ and otherwise estimating it by $1$, and using (\ref{schr-hom-linf}), we have that
		\[\|u_f\|_{H^{\alpha+2}}\lesssim 1+\|u_f\|_{L^2} \|f\|_{H^\alpha}^{\frac{\alpha+2}{2}} \lesssim  1+\|g\|_{\infty} \|f\|_{H^\alpha}^{\frac{\alpha+2}{2}}\lesssim1+\|f\|_{H^\alpha}^{\frac{\alpha+2}{2}}. \] The case of general $f \in \tilde {\mathcal F}$ now follows from taking smooth $f_n >K_{min}/2, f_n \to f $ in $H^\alpha$, showing that $u_{f_n}$ is Cauchy in $H^{\alpha+2}$ (by using (\ref{lapl-isom}), (\ref{h-int}), Lemma \ref{schafk}), and taking limits in the last inequality. Details are left to the reader.
	\end{proof}

	\subsubsection{Estimates for $G^{-1}$}
	
	\begin{lem}\label{lem-schr-stab} 
		Suppose that for some $\alpha>d/2,K_{min}\geq 0$, $g_{min}>0$ and $g\in C^\infty(\partial \mathcal O)$ with $\inf_{x \in \partial \mathcal O}g(x)\geq g_{min}$, $\tilde{\mathcal F}$ is given by (\ref{ftilde}), and let $u_f$ denote  the unique solution of (\ref{schroedeq}). Then there exist constants $c_1,c_2>0$ such that for all $f_1,f_2\in\tilde{\mathcal F}$, we have
		\begin{equation*}
		\begin{split}
		\|f_1-f_2\|_{L^2}\leq c_1\big( &e^{c_2\|f_1\|_{\infty}}\left\|u_{f_1}-u_{f_2}\right\|_{H^2}\\
		&\qquad +\|u_{f_2}\|_{C^2}\;e^{c_2\|f_1\vee f_2\|_{\infty}}\|u_{f_1}-u_{f_2}\|_{L^2}\big).
		\end{split}
		\end{equation*}
	\end{lem}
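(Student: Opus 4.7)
The first step in the plan is to derive a pointwise algebraic identity expressing $f_1-f_2$ directly in terms of $u_{f_1}-u_{f_2}$ and its Laplacian. Subtracting the two equations $\Delta u_{f_i}=2f_iu_{f_i}$ and rewriting $2f_1u_{f_1}-2f_2u_{f_2}=2f_2(u_{f_1}-u_{f_2})+2(f_1-f_2)u_{f_1}$ yields
\begin{equation*}
2(f_1-f_2)\,u_{f_1} \;=\; \Delta(u_{f_1}-u_{f_2}) - 2f_2(u_{f_1}-u_{f_2}).
\end{equation*}
Since $u_{f_1}>0$ on $\mathcal O$ (established below), one can divide through by $u_{f_1}$. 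To avoid the appearance of an unwanted factor $\|f_2\|_\infty$ in the bound, I would then eliminate $2f_2$ using the second PDE itself, namely $2f_2=\Delta u_{f_2}/u_{f_2}$. This produces the key identity
\begin{equation*}
2(f_1-f_2) \;=\; \frac{\Delta(u_{f_1}-u_{f_2})}{u_{f_1}} \;-\; \frac{\Delta u_{f_2}}{u_{f_1}\,u_{f_2}}\,(u_{f_1}-u_{f_2}).
\end{equation*}

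Taking $L^2$-norms and applying H\"older's inequality reduces the proof to controlling $\|u_{f_i}^{-1}\|_\infty$ for $i=1,2$, together with the pointwise bound $\|\Delta u_{f_2}\|_\infty\le\|u_{f_2}\|_{C^2}$ and the trivial estimate $\|\Delta v\|_{L^2}\le\|v\|_{H^2}$. The main technical point is thus an exponential lower bound on $u_{f_i}$, and for this I will use the Feynman--Kac formula (\ref{schr-fk}). Since $g\ge g_{min}>0$ and $f_i\ge 0$, Jensen's inequality applied to the convex function $x\mapsto e^{-x}$ gives
\begin{equation*}
u_{f_i}(x)\;\ge\;\tfrac{g_{min}}{2}\,\E^x\!\big[e^{-\int_0^{\tau_{\mathcal O}}f_i(X_s)ds}\big]\;\ge\;\tfrac{g_{min}}{2}\,e^{-\|f_i\|_\infty\E^x[\tau_{\mathcal O}]},
\end{equation*}
and $\sup_{x\in\mathcal O}\E^x[\tau_{\mathcal O}]\le K(\mathcal O,d)<\infty$ by Theorem 1.17 in \cite{chungzhao} (already invoked in the proof of Lemma \ref{schafk}). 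Hence $\|u_{f_i}^{-1}\|_\infty\lesssim e^{c_2\|f_i\|_\infty}$ for some $c_2=c_2(\mathcal O,d,g_{min})$.

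Assembling these ingredients, the first term on the right-hand side of the pointwise identity contributes at most $\|u_{f_1}^{-1}\|_\infty\|u_{f_1}-u_{f_2}\|_{H^2}\lesssim e^{c_2\|f_1\|_\infty}\|u_{f_1}-u_{f_2}\|_{H^2}$, while the second contributes at most $\|u_{f_1}^{-1}\|_\infty\|u_{f_2}^{-1}\|_\infty\|u_{f_2}\|_{C^2}\|u_{f_1}-u_{f_2}\|_{L^2}\lesssim e^{c_2(\|f_1\|_\infty+\|f_2\|_\infty)}\|u_{f_2}\|_{C^2}\|u_{f_1}-u_{f_2}\|_{L^2}$. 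Bounding $\|f_1\|_\infty+\|f_2\|_\infty\le 2\|f_1\vee f_2\|_\infty$ and absorbing the factor $2$ into $c_2$ delivers exactly the stated inequality. No step poses a real obstacle: the only modest subtlety is justifying the division by $u_{f_1}u_{f_2}$ as an $L^2$ identity, which follows from the Schauder regularity $u_{f_i}\in C^2(\mathcal O)\cap C(\bar{\mathcal O})$ noted after (\ref{schroedeq}) combined with the strict pointwise positivity of $u_{f_i}$ just established.
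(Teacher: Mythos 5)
Your proof is correct and follows essentially the same route as the paper: you derive exactly the identity $2(f_1-f_2)=\Delta(u_{f_1}-u_{f_2})/u_{f_1}-\Delta u_{f_2}(u_{f_1}-u_{f_2})/(u_{f_1}u_{f_2})$ that the paper obtains by writing $f=\Delta u_f/(2u_f)$ and splitting the difference of quotients, and then control both terms via the Feynman--Kac lower bound $\inf_{\mathcal O}u_f\gtrsim e^{-c\|f\|_\infty}$. The only cosmetic difference is that you estimate $|u_{f_1}^{-1}u_{f_2}^{-1}|$ directly rather than invoking the mean value theorem for $u_{f_1}^{-1}-u_{f_2}^{-1}$ as the paper does, which gives the same exponent.
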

	
	\begin{proof}
		Applying Jensen's inequality to the Feynman-Kac representation (\ref{schr-fk}), and since $\sup_x \E^x\tau_\mathcal O \le c<\infty$ (see the proof of Lemma \ref{schafk}) yields
		\begin{equation}\label{schr-inf-est}
		\inf_{x\in\mathcal O}u_f(x)\geq g_{min}\inf_{x\in\mathcal O}e^{-\|f\|_{\infty}\E^x\tau_{\mathcal O}}\geq g_{min}e^{-c\|f\|_{\infty}}>0.
		\end{equation}
		Moreover, (\ref{schroedeq}) yields that we have $f=\frac{\Delta u_f}{2u_f}$ on $\mathcal O$, for all $f\in\tilde{\mathcal F}$. Thus, for any $f_1,f_2\in\tilde{\mathcal F}$, we can estimate
		\begin{equation}\label{schr-stab-1}
		\begin{split}
		\| f_1-&f_2\|_{L^2}=\frac 12\|\frac{\Delta u_{f_1}}{u_{f_1}}-\frac{\Delta u_{f_2}}{u_{f_2}} \|_{L^2}\\
		&\lesssim \left\|\left(\Delta u_{f_1}- \Delta u_{f_2}\right)u_{f_1}^{-1} \right\|_{L^2}+\left\|\Delta u_{f_2}\left(u_{f_1}^{-1}-u_{f_2}^{-1} \right)\right\|_{L^2}\\
		&\lesssim \big(\inf_{x\in\mathcal O}\big|u_{f_1}(x)\big|\big)^{-1}\left\|u_{f_1}-u_{f_2}\right\|_{H^2}+\big\|\Delta u_{f_2}\big\|_{C^2}\big\|u_{f_1}^{-1}-u_{f_2}^{-1}\big\|_{L^2}.
		\end{split}
		\end{equation}
		Further, using the mean value theorem and (\ref{schr-inf-est}), we have that
		\begin{equation*}
		\left|u_{f_1}^{-1}-u_{f_2}^{-1}\right|\leq \max\{u_{f_1}^{-2},u_{f_2}^{-2}\}\left|u_{f_1}-u_{f_2}\right|\leq g_{min}^{-2}e^{2c\|f_1\vee f_2\|_{\infty}}\left|u_{f_1}-u_{f_2}\right|.
		\end{equation*}
		Combining this with (\ref{schr-stab-1}) and using (\ref{schr-inf-est}) once more, we obtain that
		\[\left\| f_1-f_2\right\|_{L^2}\lesssim e^{c\|f_1\|_{\infty}}\left\|u_{f_1}-u_{f_2}\right\|_{H^2}+ e^{2c\|f_1\vee f_2\|_{\infty}}\left\|u_{f_1}-u_{f_2}\right\|_{L^2}, \]
		which concludes the proof.
	\end{proof}
	
	\section{Some properties of regular link functions}\label{sec-reg}
	We define $L^p$-norms, $0<p \le \infty$, in the usual way. By obvious modifications, the following lemma holds also for regular functions $\Phi:(a,b)\to \R$ with arbitrary $-\infty\leq a<b\leq \infty$ and suitable $F,J:\mathcal O\to (a,b)$, we restrict to the case $(a,b)=\R$ here.
	
	\begin{lem}\label{lem-phi}
		Suppose $\Phi:\R \to\R$ is a smooth and regular function in the sense of (\ref{phibddder}).
		\par 1. There exists $C<\infty$ such that for all $p\in[1,\infty]$,
		\begin{equation}\label{phi-lp}
		\forall F\in L^p(\mathcal O), ~~\|\Phi\circ F\|_{L^p}\leq C(1+\|F\|_{L^p}).
		\end{equation}
		\par 
		2. For each integer $m\geq 0$, there exists $C<\infty$ such that 
		\begin{equation}\label{phi-cm}
		\forall F\in C^m(\mathcal O),~~\|\Phi\circ F\|_{C^m}\leq C\left(1+\|F\|_{C^m}^m\right).
		\end{equation}
		\par 
		3. For each integer $m\geq d/2$, there exists $C<\infty$ such that for all $F\in H^m(\mathcal O)$, we have $\Phi\circ F\in H^m(\mathcal O)$ and
		\begin{equation}\label{phi-sob}
		\|\Phi\circ F\|_{H^m}\leq C(1+\|F\|_{H^m}^m).
		\end{equation}
		\par 
		4. There exists $C<\infty$ such that for $\kappa\in\{1,2\}$ and all $F, J\in C^\kappa(\mathcal O)$,
		\begin{equation}\label{phi-neg-sob}
		\left\|\Phi\circ F-\Phi\circ J\right\|_{(H^{\kappa})^*}\leq C\left\|F-J\right\|_{(H^{\kappa})^*}\left(1+ \|F\|_{C^\kappa}^\kappa\vee \|J\|_{C^\kappa}^\kappa\right).	
		\end{equation}
		
	\end{lem}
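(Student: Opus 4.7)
The plan is to treat each of the four bounds in turn, all following from the boundedness of $\Phi'$ together with Faà di Bruno's formula for derivatives of compositions.

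For Part 1, by (\ref{phibddder}) the derivative $\Phi'$ is bounded, so by the mean value theorem $|\Phi(x)| \le |\Phi(0)| + \|\Phi'\|_\infty |x|$, and taking $L^p$ norms (pointwise for $p=\infty$ and using Minkowski otherwise) immediately gives (\ref{phi-lp}). For Part 2, Faà di Bruno expands $D^\beta (\Phi \circ F)$ for $|\beta|\le m$ as a finite linear combination of products $(\Phi^{(l)}\circ F)\cdot \prod_j D^{\gamma_j} F$ with $l \ge 1$, $\sum_j |\gamma_j|=|\beta|$ and hence $l\le|\beta|\le m$. Since $\Phi^{(l)}$ is bounded for $l\ge 1$, each such product is bounded in $C^0$ by $C \|F\|_{C^m}^{l}\le C(1+\|F\|_{C^m}^m)$; combining with the $m=0$ case from Part 1 gives (\ref{phi-cm}).

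For Part 3, I would use the same Faà di Bruno decomposition: for multi-indices $|\beta|\le m$ the derivative $D^\beta(\Phi\circ F)$ is a finite sum of terms $(\Phi^{(l)}\circ F)\prod_j D^{\gamma_j} F$ with $1\le l\le|\beta|$, $\sum_j|\gamma_j|=|\beta|$. The prefactor $\Phi^{(l)}\circ F$ is uniformly bounded on $\mathcal O$, so it suffices to estimate each product $\prod_j D^{\gamma_j}F$ in $L^2(\mathcal O)$. When $m>d/2$ the Sobolev embedding $H^m\hookrightarrow L^\infty$ and the algebra property (\ref{h-mult}) (applied to the lower order factors) bound each such product by $C\|F\|_{H^m}^{l}\le C\|F\|_{H^m}^m$ (after absorbing lower powers with $1+\|F\|_{H^m}^m$). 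The borderline case $m=d/2$ (only relevant when $d$ is even) is handled by noting that at most one factor in each product has maximal order $m$ (the others having order $\le m-1$, hence in $H^1\subset L^{2d/(d-2)}\cdot\ldots$) and applying Hölder's inequality against the $L^\infty$-bounded factor $\Phi^{(l)}\circ F$; this yields (\ref{phi-sob}).

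For Part 4, I would use the dual characterisation of the negative Sobolev norm. Writing
\[
\Phi\circ F - \Phi\circ J = (F-J)\,G_{F,J}, \qquad G_{F,J}:=\int_0^1 \Phi'\bigl(tF+(1-t)J\bigr)dt,
\]
one obtains, for any $\varphi\in H^\kappa$ with $\|\varphi\|_{H^\kappa}\le 1$,
\[
\Bigl|\int_\mathcal O (\Phi\circ F-\Phi\circ J)\varphi\Bigr| = \Bigl|\int_\mathcal O (F-J)\,G_{F,J}\,\varphi\Bigr|\le \|F-J\|_{(H^\kappa)^*}\,\|G_{F,J}\,\varphi\|_{H^\kappa},
\]
and the multiplicative inequality (\ref{c-h-mult}) gives $\|G_{F,J}\varphi\|_{H^\kappa}\lesssim \|G_{F,J}\|_{\mathcal C^\kappa}\|\varphi\|_{H^\kappa}$. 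It then remains to bound $\|G_{F,J}\|_{\mathcal C^\kappa}$. Differentiating under the integral sign and applying the chain rule, for $\kappa=1$ one gets derivatives involving $\Phi''$ and linear combinations of $\nabla F,\nabla J$, yielding $\|G_{F,J}\|_{\mathcal C^1}\lesssim 1+\|F\|_{C^1}\vee\|J\|_{C^1}$, while for $\kappa=2$ one picks up $\Phi'''$ against products like $(\nabla F)^2$ plus second-order derivatives, yielding $\|G_{F,J}\|_{\mathcal C^2}\lesssim 1+\|F\|_{C^2}^2\vee\|J\|_{C^2}^2$. This gives exactly the power $\kappa$ required in (\ref{phi-neg-sob}).

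The routine combinatorics of Faà di Bruno are not the real obstacle; the only delicate point is tracking the \emph{exact} power of $\|F\|_{C^m}$ (respectively $\|F\|_{H^m}$) produced by the highest-order chain rule terms, since obtaining a power larger than $m$ in Part 3 or larger than $\kappa$ in Part 4 would spoil the application to the verification of (\ref{entrcond}). I expect the $m=d/2$ endpoint in Part 3 to be the fiddliest step, as it forces one to distribute $L^p$-integrability across the factors via Hölder's inequality rather than relying purely on the algebra property of $H^m$.
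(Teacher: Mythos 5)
Parts 1 and 2 match the paper's argument. Part 4 arrives at the same bound but via a cleaner route: your $G_{F,J}=\int_0^1\Phi'(tF+(1-t)J)\,dt$ is identical, pointwise, to the paper's divided-difference function $\omega$, but the integral representation lets you bound $\|G_{F,J}\|_{\mathcal C^\kappa}$ by differentiating under the integral sign and invoking boundedness of $\Phi''$, $\Phi'''$ directly. The paper instead shows the two-variable divided-difference $\psi(x,y)=(\Phi(x)-\Phi(y))/(x-y)$ extends smoothly across the diagonal with bounded derivatives, via Taylor remainder estimates (\ref{partder}), and then uses the multivariate chain rule. Your version is genuinely simpler and is worth preferring.

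Part 3 contains a real gap. The issue is the claim that for $m>d/2$ one can dispatch the Fa\`a di Bruno products $\prod_{B\in\pi}D^{|B|}F$ by ``Sobolev embedding $H^m\hookrightarrow L^\infty$ and the algebra property (\ref{h-mult}).'' The algebra property requires both factors in a \emph{fixed} $H^\alpha$ with $\alpha>d/2$, and the embedding into $L^\infty$ requires order $>d/2$; neither applies to the individual factors $D^{\gamma_j}F$, which live only in $H^{m-|\gamma_j|}$. Already for $d=3$, $m=2$ (so $m>d/2$), the worst product $D^1F\cdot D^1F$ has both factors in $H^1$, which is not an algebra and does not embed into $L^\infty$; one must distribute $L^p$-integrability between them via H\"older and interpolation. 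This is not a peculiarity of the endpoint $m=d/2$: it arises whenever some $m-|\gamma_j|\le d/2$, i.e.\ whenever the partition has blocks of size close to $m$. The paper therefore makes no case distinction at all: it applies H\"older with exponents $p_i=2m/i$ to the full product (this is where $\sum_i i|\pi_i|=m$ is used) and then estimates each $\|D^iF\|_{L^{p_i}}$ by the Gagliardo--Nirenberg inequality (Lemma \ref{lem-gns}), verifying that the exponents $a_i$ in (\ref{ai}) lie in $[i/m,1]$ \emph{precisely because} $m\ge d/2$. You correctly identify that ``distributing integrability via H\"older'' is the mechanism, but you must apply it uniformly for all $m\ge d/2$, not merely at the endpoint; your current reduction to embedding and algebra does not go through.
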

	
	The rest of this section is devoted to proving Lemma \ref{lem-phi}. To prove (\ref{phi-cm})-(\ref{phi-sob}), we need Fa\'a di Bruno's formula (a generalization of the chain rule), which classically holds for $C^m$ functions, and by the chain rule for Sobolev functions (see e.g. \cite{ziemer}, Thm. 2.1.11) also holds for $H^m$ functions.
	\begin{lem}
		Let $m\in\N$ and suppose that $F:\mathcal O\to \mathbb R$ and $\Phi:\mathbb R\to \mathbb R$ are of class $H^m(\mathcal O)$ and $C^m(\R)$ respectively. Then for any $\alpha\in\{1,...,d\}^{m}$, the $m$-th order partial derivative of $f:=\Phi\circ F$ in direction $x_{\alpha_1}... x_{\alpha_m}$ is given by
		\begin{equation}\label{faadibruno}
		\frac{\partial^m f}{\partial x_{\alpha_1}...\partial x_{\alpha_m}}(x)=\sum_{\pi\in \Pi}\Phi^{(|\pi|)}(F(x))\prod_{B\in\pi}\frac{\partial ^{|B|}F}{\prod_{j\in B} \partial x_{\alpha_j}}(x),
		\end{equation}
		where $\pi$ runs through the set $\Pi$ of all partitions of $\{1,...,m\}$, and the $B\in\pi$ runs over all `blocks' $B$ of each partition $\pi$.
	\end{lem}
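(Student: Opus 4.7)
The plan is to prove Faà di Bruno's formula \eqref{faadibruno} by induction on $m$, using the Sobolev chain rule of Ziemer (Theorem 2.1.11 in \cite{ziemer}) together with the standard Sobolev product rule at each step, and closing the induction via a classical combinatorial bijection on set partitions.

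\textbf{Base case} ($m=1$): The set $\Pi$ of partitions of $\{1\}$ contains only $\pi=\{\{1\}\}$, for which $|\pi|=1$ and the unique block $B$ has $|B|=1$. Hence the right-hand side of \eqref{faadibruno} reduces to $\Phi'(F(x))\,\partial_{\alpha_1}F(x)$, which is precisely the content of Ziemer's chain rule applied to $\Phi\in C^1(\R)$ composed with $F\in H^1(\mathcal O)$. (Since Lemma \ref{lem-phi} is invoked for $m\ge d/2$, Sobolev embedding yields $F\in L^\infty_{\mathrm{loc}}$, so $\Phi^{(k)}(F)$ and its relevant products are locally bounded; otherwise one localises on the level sets of $F$.)

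\textbf{Inductive step}: Assume \eqref{faadibruno} holds for derivatives of order $m-1$. Apply $\partial/\partial x_{\alpha_m}$ to each side. On the right, each summand is a product of factors $\Phi^{(|\pi'|)}(F)$ and $\partial^{|B|}_B F$, $B\in\pi'$, where $\pi'$ is a partition of $\{1,\dots,m-1\}$. The Sobolev product rule distributes the derivative across factors, and Ziemer's chain rule applied to the $C^1$ function $\Phi^{(|\pi'|)}$ yields $\partial_{\alpha_m}[\Phi^{(|\pi'|)}(F)]=\Phi^{(|\pi'|+1)}(F)\,\partial_{\alpha_m}F$. Two cases then arise: (i) differentiating the $\Phi$-factor produces a new singleton block $\{m\}$, upgrading $\pi'$ to the partition $\pi'\cup\{\{m\}\}$ of $\{1,\dots,m\}$; (ii) differentiating the block factor $\partial^{|B|}_B F$ yields $\partial^{|B|+1}_{B\cup\{m\}}F$, upgrading $\pi'$ to the partition obtained by inserting $m$ into $B$. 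The combinatorial identity that every partition $\pi$ of $\{1,\dots,m\}$ is obtained in exactly one way by one of these two operations from a unique partition $\pi'$ of $\{1,\dots,m-1\}$ (either $\{m\}$ is a singleton block of $\pi$, or $m$ lies in a block of size $\ge 2$) produces exactly the sum over $\pi\in\Pi$ on the right-hand side of \eqref{faadibruno} at order $m$, completing the induction.

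\textbf{Main obstacle}: The essential technical point is justifying the repeated application of the product and chain rules in the Sobolev setting. For this one must check that, at every stage of the induction, each factor lies in a Sobolev space of nonnegative order so that Ziemer's theorem applies and the products are in $L^1_{\mathrm{loc}}(\mathcal O)$, allowing weak differentiation. With $F\in H^m(\mathcal O)$ one has $\partial^{|B|}_B F\in H^{m-|B|}(\mathcal O)$ for each block, and the key constraint $\sum_{B\in\pi}|B|=m$ together with the Sobolev embeddings $H^{m-|B|}\hookrightarrow L^{p_B}(\mathcal O)$ (with reciprocal exponents summing to at most one when $m\ge d/2$) makes every product locally integrable via Hölder. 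Moreover, since $F\in L^\infty_{\mathrm{loc}}$ under this regime and $\Phi\in C^m(\R)$, the derivative factors $\Phi^{(k)}(F)$ are locally bounded and each $\Phi^{(k)}$ has locally bounded derivative on the range of $F$, so Ziemer's chain rule is legitimately invoked at every step.
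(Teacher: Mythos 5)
Your proof is correct and follows essentially the same route the paper takes: iterate the Sobolev chain rule of Ziemer (Theorem 2.1.11) and reproduce the classical combinatorial induction behind Fa\`a di Bruno's formula. The paper states the lemma with only a one-line justification (citing the classical $C^m$ case plus Ziemer's chain rule), whereas you write out the induction, the set-partition bijection, and the integrability/embedding bookkeeping that makes the repeated product and chain rules legitimate in the $H^m$ setting.
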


	\begin{proof}[Proof of (\ref{phi-lp})-(\ref{phi-cm})]
		By (\ref{phibddder}), there exists a constant $c>0$ only depending on the values of $\Phi(0)$ and $\|\Phi'\|_{\infty}$ such that for all $x\in\R$, $|\Phi(x)|\leq c(1+|x|)$, which yields (\ref{phi-lp}). For (\ref{phi-cm}), let $\alpha\in\{1,...,d\}^{|\alpha|}$, $1\leq |\alpha|\leq m$ and let $\pi$ be a partition of $\{1,...,|\alpha|\}$. Then the corresponding summand on the right side of (\ref{faadibruno}) can be estimated by 
		\[ \left\| \prod_{B\in\pi}\frac{\partial ^{|B|}F}{\prod_{j\in B} \partial x_{\alpha_j}}\right\|_{\infty} \leq \|F\|_{C^m}^{|\pi|}\lesssim \left(1+\|F\|_{C^m}^{m}\right). \]
		By summing the above display over all such $\alpha,\pi$ and using (\ref{phi-lp}) with $p=\infty$, we obtain (\ref{phi-cm}).
	\end{proof}
	
	To prove (\ref{phi-sob}), we also need the Gagliardo-Nirenberg interpolation inequality (see \cite{nirenberg59}, p.125) in the special case $r=q=2$.
	\begin{lem}\label{lem-gns}
		Suppose that $\mathcal O\subseteq \mathbb R^d$ is a bounded $C^\infty$ domain and that $i=1,...,m$, $a\in [i/m,1]$ and $p\in [1,\infty)$ satisfy
		\begin{equation}\label{gns-cond}
		\frac 1p=\frac 12 +\frac i d - \frac md a.
		\end{equation}
		
		Then for any $s>0$, there exist constants $C_1,C_2$ depending only on $m, d, i, a, \mathcal O$ and $s$ such that for all $F \in H^m$, we have that $D^iF\in L^p$, and 
		$$\|D^iF\|_{L^p}\leq C_1\|D^mF\|_{L^2}^a\|F\|_{L^2}^{1-a}+C_2\|F\|_{L^s}.$$
	\end{lem}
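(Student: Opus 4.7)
The plan follows Nirenberg's classical strategy: first prove a scale-invariant version of the inequality on $\mathbb R^d$ by Fourier-analytic interpolation, then transfer it to $\mathcal O$ via a universal Sobolev extension, and finally replace the $L^2$ low-order norm by the $L^s$ norm using a compactness argument on the bounded domain. For the first step, taking $F \in C_c^\infty(\mathbb R^d)$, I would prove
\[\|D^i F\|_{L^p(\mathbb R^d)} \le C_0 \|D^m F\|_{L^2(\mathbb R^d)}^a \|F\|_{L^2(\mathbb R^d)}^{1-a}.\]
When $a=i/m$ (so $p=2$) this is immediate from Plancherel's theorem combined with H\"older's inequality on the Fourier side,
\[\int_{\mathbb R^d} |\xi|^{2i}|\hat F|^2 \,d\xi \le \Big(\int |\xi|^{2m}|\hat F|^2\,d\xi\Big)^{i/m} \Big(\int |\hat F|^2\,d\xi\Big)^{1-i/m}.\]
For general $a > i/m$ and $p>2$, set $\sigma := ma-i$ so $1/p = 1/2 - \sigma/d$; in the range $\sigma \in (0, d/2)$ the Hardy-Littlewood-Sobolev inequality applied to $D^i F$ gives $\|D^i F\|_{L^p(\mathbb R^d)} \lesssim \|(-\Delta)^{\sigma/2} D^i F\|_{L^2(\mathbb R^d)}$, whose square is comparable via Plancherel to $\int |\xi|^{2ma}|\hat F|^2 \,d\xi$, and one further round of H\"older interpolation between $|\xi|^{2m}|\hat F|^2$ and $|\hat F|^2$ delivers the stated bound. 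The borderline case $\sigma \ge d/2$ (accessible only in low dimension with large $a$) is handled by Nirenberg's inductive one-dimensional slicing.

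For the second step, since $\partial\mathcal O \in C^\infty$, Stein's universal extension theorem provides a bounded linear operator $\mathcal E$, simultaneously mapping $H^k(\mathcal O) \to H^k(\mathbb R^d)$ for every $k \in \{0,\ldots,m\}$, with $\mathcal E F$ supported in a fixed bounded neighbourhood $\tilde{\mathcal O}$ of $\bar{\mathcal O}$. Applying Step 1 to $\mathcal E F$ and using $\|D^i F\|_{L^p(\mathcal O)} \le \|D^i \mathcal E F\|_{L^p(\mathbb R^d)}$ gives
\[\|D^i F\|_{L^p(\mathcal O)} \lesssim \|D^m \mathcal E F\|_{L^2(\mathbb R^d)}^a \|\mathcal E F\|_{L^2(\mathbb R^d)}^{1-a} \lesssim \|F\|_{H^m(\mathcal O)}^a \|F\|_{L^2(\mathcal O)}^{1-a}.\]
On a bounded $C^\infty$ domain the norm $\|F\|_{H^m(\mathcal O)}$ is equivalent to $\|D^m F\|_{L^2(\mathcal O)} + \|F\|_{L^2(\mathcal O)}$ (a Peetre-type compactness consequence of the finite-dimensionality of the polynomial kernel of $D^m$ inside $H^m(\mathcal O)$), and together with $(x+y)^a \le x^a + y^a$ for $a \in [0,1]$ this reduces the display to
\[\|D^i F\|_{L^p(\mathcal O)} \le A_1 \|D^m F\|_{L^2}^a \|F\|_{L^2}^{1-a} + A_2 \|F\|_{L^2(\mathcal O)}.\]

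For the third step, it remains to absorb the trailing $\|F\|_{L^2(\mathcal O)}$. When $s \ge 2$, $\|F\|_{L^2(\mathcal O)} \le |\mathcal O|^{1/2-1/s}\|F\|_{L^s(\mathcal O)}$ by H\"older. When $s < 2$ I argue by contradiction: were there no finite $K$ with $\|F\|_{L^2(\mathcal O)} \le K(\|D^m F\|_{L^2}^a\|F\|_{L^2}^{1-a} + \|F\|_{L^s})$ for every $F \in H^m(\mathcal O)$, one could find a sequence $F_n$ with $\|F_n\|_{L^2}=1$, $\|D^m F_n\|_{L^2} \to 0$ and $\|F_n\|_{L^s} \to 0$, bounded in $H^m(\mathcal O)$ by the equivalent norm above, hence by Rellich-Kondrachov admitting a subsequence convergent in $L^2(\mathcal O)$ to some $F$ with $\|F\|_{L^2}=1$; since $L^2(\mathcal O) \hookrightarrow L^s(\mathcal O)$ on the bounded domain for $s \le 2$, this convergence also holds in $L^s$, forcing $\|F\|_{L^s}=0$ and hence $F \equiv 0$, a contradiction. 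Combining with Step 2 yields the lemma with constants depending only on $m, d, i, a, \mathcal O, s$. The most delicate piece of the argument is Step 1 in the borderline regime $\sigma \geq d/2$, where Hardy-Littlewood-Sobolev is no longer directly applicable and one must invoke either Bessel-potential/Besov interpolation or Nirenberg's inductive one-dimensional reduction; Steps 2 and 3 are routine Sobolev calculus on a smooth bounded domain.
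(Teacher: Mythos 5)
The paper does not supply a proof of Lemma~\ref{lem-gns}; it is taken as known and simply referenced to Nirenberg's 1959 paper (p.~125, in the special case $r=q=2$). Your write-up is therefore a genuine, self-contained derivation where the paper has only a citation, and it is correct. The Fourier-analytic route in Step~1 (Plancherel + H\"older on the Fourier side for $p=2$, then homogeneous Sobolev embedding $\dot H^{\sigma}\hookrightarrow L^{p}$ plus another round of H\"older for $p>2$) is cleaner and more transparent for this $L^2$-based case than Nirenberg's original one-dimensional slicing and dyadic decomposition argument, which is built to cover general $L^q$, $L^r$ base norms; the Stein extension (Step~2), Peetre/Ehrling equivalence $\|F\|_{H^m(\mathcal O)}\simeq\|D^mF\|_{L^2(\mathcal O)}+\|F\|_{L^2(\mathcal O)}$, and Rellich-Kondrachov compactness contradiction (Step~3) are exactly the standard machinery for passing from $\mathbb R^d$ to a bounded smooth domain and from $L^2$ to $L^s$ with $s<2$.

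One small simplification you may have missed: the borderline regime $\sigma=ma-i\geq d/2$ that you flag as ``the most delicate piece'' can never occur under the hypotheses of the lemma. Since $1/p=1/2-\sigma/d$ and the hypothesis $p\in[1,\infty)$ forces $1/p>0$, one has $\sigma<d/2$ automatically; and $a\geq i/m$ gives $\sigma\geq 0$. So the full admissible range is $\sigma\in[0,d/2)$, which is exactly where your Plancherel argument ($\sigma=0$) and the fractional Sobolev embedding ($\sigma\in(0,d/2)$) apply, and no appeal to Nirenberg's inductive one-dimensional reduction is needed.
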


	\begin{proof}[Proof of (\ref{phi-sob})]
		Let us write $f=\Phi\circ F$. By (\ref{phi-lp}), we have that $\|f\|_{L^2}\leq C(1+\|F\|_{L^2})$ whence we only need to estimate $\|D^mf\|_{L^2}$. For any $\alpha\in\{1,...,d\}^m$ we have by $(\ref{faadibruno})$ that
		\begin{equation*}
		\begin{split}
		\left|\frac{\partial^m f}{\partial x_{\alpha_1}...\partial x_{\alpha_m}}(x)\right|^2\lesssim \sum_{\pi\in\Pi}\left| \prod_{B\in\pi}\frac{\partial ^{|B|}F}{\prod_{j\in B} \partial x_{\alpha_j}}(x)\right|^2
		\end{split}
		\end{equation*}
		Similarly to the proof of (\ref{phi-cm}), it thus suffices to prove that for all $\alpha\in\{1,...,d\}^m$ and partition $\pi$ of $\{1,...,m\}$,
		\begin{equation}\label{hm-red}
		\left\|\prod_{B\in\pi}\frac{\partial ^{|B|}F}{\prod_{j\in B} \partial x_{\alpha_j}} \right\|_{L^2}\lesssim(1+\|F\|_{H^m}^m).
		\end{equation}
		Fix some $\pi$ for the rest of the proof. For $i=1,...,m$, define
		$$\pi_i:=\left\{B\in\pi\;\middle| \;|B|=i\right\}, \qquad p_i:=\frac{2m}{i}.$$
		Then we have $\sum_{i=1}^{m}i|\pi_i| = m$, and hence by H\"older's inequality
		\begin{equation}\label{holder}
		\begin{split}
		\left\|\prod_{B\in\pi}\frac{\partial ^{|B|}F}{\prod_{j\in B} \partial x_{\alpha_j}} \right\|_{L^2}&\leq \left\|\prod_{i=1}^m|D^iF|^{|\pi_i|} \right\|_{L^2}\leq \prod_{i=1}^m\left\||D^iF|^{|\pi_i|}\right\|_{L^{p_i/|\pi_i|}}\\
		&=\prod_{i=1}^m\left\|D^iF\right\|_{L^{p_i}}^{|\pi_i|}.
		\end{split}
		\end{equation}
		
		Next, define 
		\begin{equation}\label{ai}
		a_i:=\left(\frac id+\frac 12-\frac i{2m}\right)\frac dm\qquad  \textnormal{for } i=1,...,m.
		\end{equation}
		To apply Lemma \ref{lem-gns}, we verify that for each $i=1,...,m$, $(i,a_i,p_i)$ satisfies the conditions of Lemma \ref{lem-gns}. By definition, (\ref{gns-cond}) is satisfied. Moreover, as $i\leq m$, it follows that
		$$ma_i=i+\left(\frac d2-\frac{di}{2m}\right)\geq i,$$
		whence we have $\frac{i}{m}\leq a_i$. Finally, we need to verify $a_i\leq 1$. For this, we note that for $i=1,...,m$, choosing $m=d/2$ in (\ref{ai}) yields $a_i=a_i(m)=1$. Moreover, for $m\geq d/2$, we have
		\begin{equation}
		\begin{split}
		\frac{\partial a_i(m)}{\partial m}=\frac{2di-2mi-dm}{2m^3}\leq \frac{di-dm}{2m^3}\leq 0,
		\end{split}
		\end{equation}
		so that $\alpha_i\leq 1$.
		\par
		Applying Lemma \ref{lem-gns} with $s=2$ to (\ref{holder})  and using that $\sum_{i=1}^m|\pi_i|\in [1,m]$ yields that
		\begin{align*}
		\left\|\prod_{B\in\pi}\frac{\partial ^{|B|}F}{\prod_{j\in B} \partial x_{\alpha_j}} \right\|_{L^2}&\lesssim
		\prod_{i=1}^m \left(\|D^mF\|_{L^2}^{a_i}\|F\|_{L^2}^{1-a_i}+\|F\|_{L^2}\right)^{|\pi_i|}\\
		&\lesssim \prod_{i=1}^m \|F\|_{H^m}^{|\pi_i|}\lesssim 1+\|F\|_{H^m}^m.
		\end{align*}
	\end{proof}
	\begin{proof}[Proof of (\ref{phi-neg-sob})]
		1. Let $\kappa\in \{1,2\}$ and fix $F,J\in C^\kappa(\mathcal O)$. Define the function
		\[\omega:\mathcal O\to \mathbb R,\qquad \omega(x):=
		\begin{cases}
		\frac{\Phi(F(x))-\Phi(J(x))}{F(x)-J(x)} \quad \text{if} \;\; x\in\{F\neq J\}\;\\
		\Phi'(F(x))\quad \text{if} \;\; x\in \{F= J\}.
		\end{cases} \]
		Then we have, using also (\ref{c-h-mult}), that
		\begin{equation*}
		\begin{split}
		\|\Phi\circ F-\Phi\circ J\|_{(H^{\kappa})^*}&=\sup_{\varphi\in C^\infty(\mathcal O), \; \|\varphi\|_{H^\kappa}\leq 1}\left|\int_{\mathcal O} \varphi(\Phi\circ F-\Phi\circ J)\mathbbm 1_{\{F\neq J\}} \right|\\
		&=\sup_{\varphi\in C^\infty(\mathcal O), \; \|\varphi\|_{H^\kappa}\leq 1}\left|\int_{\mathcal O} (F-J) \varphi  \omega \right|\\
		&\leq \|F-J\|_{(H^{\kappa})^*}\sup_{\varphi\in C^\infty(\mathcal O), \; \|\varphi\|_{H^\kappa}\leq 1}\left\|\varphi \omega\right\|_{H^k}\\
		&\lesssim  \|F-J\|_{(H^{\kappa})^*}\left\|\omega\right\|_{C^\kappa}.
		\end{split}
		\end{equation*}
		\par
		2. Thus it suffices to prove that $\left\|\omega\right\|_{C^\kappa}\leq C (1 + \|F\|_{C^\kappa}^\kappa\vee \|J\|_{C^\kappa}^\kappa)$ for some $C>0$ independent of $F$ and $J$. Writing $\omega=\psi\circ \phi$, where
		\[\phi:\mathcal O\to \mathbb R^2, \qquad \phi(z)=\left(F(z),J(z)\right),\]
		\[\psi:\mathbb R^2\to (0,\infty), \qquad \psi(x,y)=
		\begin{cases}
		\frac{\Phi(x)-\Phi(y)}{x-y} \quad \text{if} \;\; x\neq y\\
		\Phi'(x)\quad \text{if} \;\; x=y, 
		\end{cases}\]
		we see by the multivariate chain rule that it suffices to show that $\psi$ is $\kappa$-times continuously differentiable with bounded derivatives, and we achieve this by showing that the partial derivatives of $\psi$ of order $\kappa$ exist and are continuous throughout $\mathbb R^2$.
		\par
		3. We will repeatedly use the following basic fact: Let $h:\mathbb R\to \mathbb R$ be continuous and continuously differentiable on $\mathbb R\setminus \{0\}$. If $h'$ has a continuous extension $g$ to $\mathbb R$ with some value $g(0)=\xi$, then $h\in C^1(\mathbb R)$ with $h'(0)=\xi$.
		\par 
		4.  Clearly, $\psi$ is smooth on $\mathbb R^2\setminus\{x=y\}$. For $k\geq 0$  and $x,y\in \mathbb R$, we denote the remainder of the $k$-th order Taylor expansion by
		$$R_{k,x}(y):=\Phi(y)-\sum_{j=0}^{k}\frac{\Phi^{(j)}(x)}{j!}(y-x)^j.$$
		For $x\neq y$, we have $\psi(x,y)=\frac{R_{0,x}(y)}{y-x}$ and also, by induction
		\begin{equation}\label{partder}
		\partial_1^k\psi(x,y)=\frac{k!R_{k,x}(y)}{(y-x)^{k+1}}, \quad k\geq 0,
		\end{equation}
		where $\partial_1$ denotes the partial derivative with respect to $x$. By the mean value form of the remainder, we know that $R_{k,x}(y)=\frac{\Phi^{(k+1)}(\xi)}{(k+1)!}(y-x)^{k+1}$ for some $\xi$ between $x$ and $y$. Thus we can continuously extend $\partial_1^k\psi$ to $\{x=y\}$ by
		$$\partial_1^k\psi(x,x)=\frac{\Phi^{(k+1)}(x)}{k+1}.$$
		It follows that the partial derivatives with respect to $x$ of all orders exist and are continuous on $\mathbb R^2$. The same holds for the partial derivatives with respect to $y$, by symmetry, concluding the proof of the case $\kappa=1$. The case $\kappa=2$ follows by adapting the previous arguments for mixed partial derivative $\partial_1\partial_2$ and is left to the reader.
	\end{proof}

	\section{Proof of Theorem \ref{thm-gen}, Part 1} \label{sec-ex-pf}
	
	Let $\lambda,\eps>0$ be fixed throughout and let us write $\mathscr J=\mathscr J_{\lambda, \eps}$. We denote by $\mathcal T_w=\mathcal T_{w,\alpha}$ the weak topology on $\mathcal H$ (recall $\mathcal H=H^\alpha(\mathcal O)$ if $\kappa<1/2$ and $\mathcal H=H^\alpha_c(\mathcal O)$ if $\kappa\geq 1/2$), i.e. the coarsest topology with respect to which all bounded linear functionals $L:\mathcal H\to \mathbb R$ are continuous. We also denote the subspace topology on subsets of $\mathcal H$ by $\mathcal T_w$. On any closed ball $\mathcal H(R):=\{F\in \mathcal H:\|F\|_{H^\alpha} \le R\}$, this topology is metrisable by some metric $d$, see e.g. Theorem 2.6.23 in \cite{megg}.

	\paragraph{Step 1: Localisation} In Lemma \ref{lem-main}, by assumption on $\alpha$, we have that $\Psi_*(\lambda,R)/R^2\xrightarrow{R\to\infty}0$ and so there exists $\delta>0$ such that for all $R\geq \delta$, we have that $R^2\geq c_1\eps\Psi_*(\lambda,R)$, where $c_1$ is the constant from (\ref{delta-req}). Thus, applying Theorem \ref{thm-ep}, we have that the events
	\[A_j:=\left\{\mathscr J\text{ has a maximizer } \hat F\notin \mathcal V\cap \mathcal H(2^j) \right\}\]
	satisfy $\P(A_j)\xrightarrow{j\to\infty} 0$, whence choosing $j\in\N$ large enough ensures that
	$$\sup_{F\in\mathcal V\cap H^{\alpha}(2^j)}\mathscr J(F)=\sup_{F\in\mathcal V}\mathscr J(F)$$ 
	holds with probability as close to one as desired.
	
	\paragraph{Step 2: Local existence via direct method}
	By the previous step, it suffices to show that for any $j\in\N$, $\mathscr J$ almost surely has a maximizer over $\mathcal V\cap \mathcal H(2^j)$. We fix some $j\in\N$. As $\mathcal V$ is weakly closed and $\mathcal H(2^j)$ is weakly sequentially compact by the Banach-Alaoglu Theorem, it follows that any sequence $F_n\in \mathcal V\cap \mathcal H(2^j)$ has a weakly convergent subsequence $F_n\to F$ with weak limit $F\in\mathcal V\cap \mathcal H(2^j)$. Moreover, we claim that $-\mathscr J:\mathcal V\cap \mathcal H(2^j)\to \R$ is lower semicontinuous with respect to $\mathcal T_w$. To see this, we decompose $-\mathscr J$ as
	\[-\mathscr J(F)=-2\langle Y,\mathscr G(F)\rangle_\mathbb H+\|\mathscr G(F)\|_{\mathbb H}^2+\lambda^2\|F\|_{H^\alpha}^2=:I+II+III.\]
	The term $I$ is, almost surely under $\mathbb P_{F_0}^\eps$, continuous w.r.t. $\mathcal T_w$ by Lemma \ref{lem-Ycont}, $II$ is continuous w.r.t. $\mathcal T_w$ by Lemma \ref{lem-IIcont} and $III$ is lower semicontinuous by a standard fact from functional analysis. Thus the existence of minimisers follows from the direct method of the calculus of variations. 

	The next three lemmas are needed to prove lower semicontinuity of $-\mathscr J$.
	
	\begin{lem}\label{lem-weakcon}
		Let $\alpha>0$ and let $\left(F_n: n\in\mathbb N\right)\subseteq \mathcal H$, for $\mathcal H=H^\alpha$ or $H^\alpha_c$, be a sequence such that $F_n\to F$ for $\mathcal T_w$. Then also $F_n\to F$ in $L^2$.
	\end{lem}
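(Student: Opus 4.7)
The claim is essentially a consequence of the Rellich--Kondrachov compact embedding theorem for bounded smooth domains, combined with a standard subsequence argument. My plan is as follows.

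First, since $F_n \to F$ weakly in $\mathcal H$, the Banach--Steinhaus theorem implies that $(F_n)$ is norm-bounded in $\mathcal H$, say $\|F_n\|_{H^\alpha} \le M$ for some $M<\infty$ (and necessarily $\|F\|_{H^\alpha}\le M$ as well). Next, I invoke compactness of the embedding $H^\alpha(\mathcal O) \hookrightarrow L^2(\mathcal O)$, which holds for any $\alpha>0$ on the bounded smooth domain $\mathcal O$ (Rellich--Kondrachov, see e.g. Theorem 4.3.2 in \cite{T78}); in the case $\mathcal H = H^\alpha_c(\mathcal O)$ this is a closed subspace of $H^\alpha(\mathcal O)$, so the same compactness applies. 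Compactness means that every bounded sequence in $\mathcal H$ has an $L^2$-convergent subsequence.

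Now the standard subsequence principle takes over. Let $(F_{n_k})$ be an arbitrary subsequence of $(F_n)$; by compactness there is a further subsequence $(F_{n_{k_j}})$ converging in $L^2(\mathcal O)$ to some $\tilde F \in L^2(\mathcal O)$. On the other hand, the inclusion $\mathcal H \hookrightarrow L^2(\mathcal O)$ is continuous and linear, hence weak-to-weak continuous, so $F_{n_{k_j}} \to F$ weakly in $L^2(\mathcal O)$. Since strong $L^2$-convergence implies weak $L^2$-convergence, uniqueness of weak limits in $L^2(\mathcal O)$ forces $\tilde F = F$ a.e. Thus every subsequence of $(F_n)$ admits a further subsequence converging to $F$ in $L^2(\mathcal O)$, which by the Urysohn-type subsequence principle implies that the whole sequence $F_n \to F$ in $L^2(\mathcal O)$, as required.

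There is no real obstacle here; the only point worth checking is the applicability of Rellich--Kondrachov to the possibly non-integer, and possibly zero-trace, Sobolev scale used in the paper, but this is a standard result in the references already cited (Triebel \cite{T78} and Lions--Magenes \cite{lionsmagenes}).
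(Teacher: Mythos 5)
Your proof is correct and follows essentially the same route as the paper: uniform boundedness of $(F_n)$ via Banach--Steinhaus, Rellich--Kondrachov compactness of the embedding $H^\alpha(\mathcal O)\hookrightarrow L^2(\mathcal O)$, and the subsequence-of-subsequence argument with uniqueness of weak limits in $L^2$. You are slightly more explicit than the paper about why weak $\mathcal H$-convergence implies weak $L^2$-convergence (continuity, hence weak-to-weak continuity, of the inclusion), which is a small but welcome clarification.
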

	\begin{proof}
		It suffices to show that for any subsequence $(F_{n_j}:j\in\mathbb N)$, there exists a further subsequence $(F_{n_{j'}}:j'\in\mathbb N)$ such that $F_{n_{j'}}\to F$  in $L^2$. By the uniform boundedness principle, there exists $R>0$ such that for all $n\in\mathbb N$, $\|F_n\|_{H^\alpha}\leq R$. By the Rellich-Kondrashov compactness theorem, the closed ball $\mathcal H(R)$ is pre-compact with respect to $L^2$ topology, hence for any subsequence $(F_{n_j})$ of $(F_n)$, there exists a further convergent subsequence $(F_{n_{j'}})$ with limit $\tilde F$ in $L^2$. In particular, we have $F_n\to F$ weakly in $L^2$ and $F_{n_{j'}}\to \tilde F$ in $L^2$, so that by the uniqueness of weak limits, we have $\tilde F=F$ as elements in $L^2$, and therefore $\tilde F=F$ a.e. in $\mathcal O$ and $F_n\to F$ in $L^2$.
	\end{proof}
	
	\begin{lem}\label{lem-IIcont}
		Let $\alpha>0$, $\kappa, \gamma\in \R_+$ and $\mathcal V_0\subseteq \mathcal V$ be a bounded subset of $\mathcal H=H^\alpha$ or $H^\alpha_c$. If a map $\mathscr G: \mathcal V\to \mathbb H$ is $(\kappa,\gamma,\alpha)$-regular, then it is continuous as a mapping from $(\mathcal V_0,d)$ to $\mathbb H$.
	\end{lem}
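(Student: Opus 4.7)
The approach is to combine the modulus of continuity condition (\ref{entrcond}) with Lemma \ref{lem-weakcon}, exploiting boundedness of $\mathcal V_0$ in $\mathcal H$ to control the polynomial prefactor $(1+\|\cdot\|_{H^\alpha}^\gamma)$. Since $(\mathcal V_0,d)$ is a metric space, continuity is equivalent to sequential continuity, so it suffices to fix a sequence $(F_n)\subseteq\mathcal V_0$ with $d(F_n,F)\to 0$ for some $F\in\mathcal V_0$ and show that $\|\mathscr G(F_n)-\mathscr G(F)\|_{\mathbb H}\to 0$.

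First I would translate $d(F_n,F)\to 0$ into weak convergence $F_n\to F$ in $\mathcal H$, which is the defining property of $d$ on bounded subsets, and then apply Lemma \ref{lem-weakcon} to upgrade this to norm convergence $F_n\to F$ in $L^2(\mathcal O)$. Second, since $\kappa\geq 0$ we have a continuous embedding $H^\kappa(\mathcal O)\hookrightarrow L^2(\mathcal O)$ (and, in the case $\kappa\geq 1/2$, also $\tilde H^\kappa(\mathcal O)\hookrightarrow L^2(\mathcal O)$), so dually $L^2(\mathcal O)\hookrightarrow (H^\kappa(\mathcal O))^*$ continuously via Cauchy--Schwarz, giving $\|F_n-F\|_{(H^\kappa(\mathcal O))^*}\lesssim\|F_n-F\|_{L^2}\to 0$.

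Third I would apply (\ref{entrcond}): since $\mathcal V_0$ is bounded in $H^\alpha$, say $\sup_{G\in\mathcal V_0}\|G\|_{H^\alpha}\leq R<\infty$, the prefactor satisfies $1+\|F_n\|_{H^\alpha}^\gamma\vee\|F\|_{H^\alpha}^\gamma\leq 1+R^\gamma$ uniformly in $n$, whence
\[\|\mathscr G(F_n)-\mathscr G(F)\|_{\mathbb H}\leq C(1+R^\gamma)\|F_n-F\|_{(H^\kappa(\mathcal O))^*}\xrightarrow{n\to\infty}0.\]

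There is no serious obstacle in this argument: the proof is a short concatenation of the weak-to-$L^2$ upgrade provided by Lemma \ref{lem-weakcon}, the elementary duality embedding $L^2\hookrightarrow (H^\kappa(\mathcal O))^*$, and the uniform boundedness of $\mathcal V_0$ in $H^\alpha$, which together reduce weak continuity of $\mathscr G$ on $\mathcal V_0$ to the already-assumed modulus of continuity. The only point that requires a moment's thought is keeping track of which variant of the $\kappa$-order Sobolev space (zero-extension versus interior) appears in the dual norm when $\kappa\geq 1/2$, but either way the $L^2$-to-dual embedding is immediate from Cauchy--Schwarz.
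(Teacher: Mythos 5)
Your proposal is correct and follows essentially the same route as the paper: pass from $d$-convergence to weak convergence, use Lemma \ref{lem-weakcon} to upgrade to $L^2$-convergence, embed $L^2\hookrightarrow(H^\kappa)^*$, and then invoke the $(\kappa,\gamma,\alpha)$-regularity bound (\ref{entrcond}) together with the uniform $H^\alpha$-bound on $\mathcal V_0$. The extra details you spell out (metrisability, the Cauchy--Schwarz duality embedding) are correct but implicit in the paper's shorter argument.
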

	\begin{proof}
		Take any $F_n,F\in \mathcal V_0$ such that $F_n\to F$ for $\mathcal T_w$ and note that $\|F_n\|_{H^\alpha} \le R$ for some $R>0$. By Lemma \ref{lem-weakcon} we have $\|F_n - F\|_{L^2}\to 0$ and by (\ref{entrcond}) and the continuous imbedding $L^2 \subseteq (H^\kappa)^*, \kappa \ge 0$, we obtain
		\begin{equation}
		\|\mathscr G(F_n)-\mathscr G(F)\|_{\mathbb H}\leq C\left(1+R^\gamma\right) \|F_n-F\|_{L^2} \xrightarrow{n\to\infty} 0.
		\end{equation}
	\end{proof}

	We finally establish a continuity result for the Gaussian process $Y^{(\eps)}$. 
	
	\begin{lem}\label{lem-Ycont}
		Suppose that $Y^{(\eps)}$ and $\mathscr G$ are as in Theorem \ref{thm-gen}. Then there exists a version of the Gaussian white noise process $\W$ in $\mathbb H$ such that for all $R>0$, the map (between metric spaces)
		\[ \Psi: (\mathcal V\cap \mathcal H(R),d)\to \mathbb R, \qquad F\mapsto \langle Y^{(\eps)}, \mathscr G(F)\rangle_{\mathbb H}\]
		is almost surely uniformly continuous.
	\end{lem}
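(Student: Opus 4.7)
The plan is to split $\Psi$ into a deterministic and a stochastic term via $Y^{(\eps)} = \mathscr G(F_0) + \eps \W$:
\[
\Psi(F) = \langle \mathscr G(F_0), \mathscr G(F)\rangle_{\mathbb H} + \eps\, \W(\mathscr G(F)).
\]
First I would note that $(\mathcal V \cap \mathcal H(R), d)$ is compact: $\mathcal H(R)$ is $\mathcal T_w$-compact by Banach--Alaoglu and reflexivity of $\mathcal H$, and weak closedness of $\mathcal V$ (as in Theorem \ref{thm-gen}, Part 1) makes $\mathcal V \cap \mathcal H(R)$ a closed subset of this compact metrisable space.

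For the deterministic term, Lemma \ref{lem-IIcont} applied with $\mathcal V_0 = \mathcal V \cap \mathcal H(R)$ shows that $F \mapsto \mathscr G(F)$ is continuous from $(\mathcal V \cap \mathcal H(R), d)$ into $(\mathbb H, \|\cdot\|_{\mathbb H})$. Composing with the bounded linear functional $\psi \mapsto \langle \mathscr G(F_0), \psi\rangle_{\mathbb H}$ yields continuity, which on the compact domain automatically upgrades to uniform continuity.

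For the stochastic term, consider the centred Gaussian process
\[
Z(F) := \W(\mathscr G(F)), \qquad F \in \mathcal V \cap \mathcal H(R),
\]
with intrinsic pseudo-metric $\sigma(F_1, F_2) = \|\mathscr G(F_1) - \mathscr G(F_2)\|_{\mathbb H}$. Since $\mathscr G$ is $(\kappa, \gamma, \alpha)$-regular and $\mathcal V \cap \mathcal H(R)$ is $H^\alpha$-bounded, a verbatim repetition of the extension/covering argument from the proof of Lemma \ref{lem-main} gives a constant $C_R$ such that
\[
H\bigl(\rho,\, \mathscr G(\mathcal V \cap \mathcal H(R)),\, \|\cdot\|_{\mathbb H}\bigr) \leq C_R\, \rho^{-1/s}, \qquad s=\frac{\alpha+\kappa}{d}.
\]
The hypothesis $\alpha > d/2-\kappa$ ensures $1/(2s) < 1$, so the Dudley entropy integral is finite. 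By Dudley's theorem (e.g.~Theorem 2.3.7 in \cite{nicklgine}), $Z$ admits a modification $\tilde Z$ which is almost surely $\sigma$-uniformly continuous on $\mathcal V \cap \mathcal H(R)$. Since the continuous map $F \mapsto \mathscr G(F)$ is in fact $d$-to-$\sigma$ uniformly continuous on the compact set $\mathcal V \cap \mathcal H(R)$, it follows that $\tilde Z$ is almost surely $d$-uniformly continuous.

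The remaining task, which is the only real technical subtlety, is to produce a \emph{single} version of $\W$ on $\mathbb H$ for which the above property holds simultaneously for every $R > 0$. For this I would exhaust $\mathcal V = \bigcup_{j \in \mathbb N}(\mathcal V \cap \mathcal H(j))$, take the countable intersection of the corresponding full-probability events, and on this event set $\W^*(\mathscr G(F)) := \tilde Z(F)$ for $F \in \mathcal V$ (well-defined because $\mathscr G(F_1) = \mathscr G(F_2)$ forces $Z(F_1) = Z(F_2)$ almost surely, and then equal on the continuous modification), extending by the original $\W$ on a measurable complement of $\mathscr G(\mathcal V)$. The finite-dimensional laws of $\W$ are preserved since modifications do not alter them, and each restriction $F \mapsto \W^*(\mathscr G(F))$ is uniformly continuous by construction; the heart of the argument remains the entropy bound feeding Dudley's theorem.
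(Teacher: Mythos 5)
Your proof is correct and follows essentially the same route as the paper: both arguments rely on the metric entropy bound obtained exactly as in the proof of Lemma~\ref{lem-main}, feed it into Dudley's theorem, and use the weak compactness of $\mathcal V\cap\mathcal H(R)$ together with the continuity of $\mathscr G$ (Lemma~\ref{lem-IIcont}) to pass from the intrinsic pseudo-metric $\sigma$ to the metric $d$. Your version is somewhat more explicit about the deterministic/stochastic decomposition and about patching the modifications over $R\in\mathbb N$ into a single version, details the paper leaves implicit, but the underlying mechanism is identical.
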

	\begin{proof}
		For any $\delta>0$, define the modulus of continuity  	\[M_\delta :=\sup_{F,H\in \mathcal V\cap \mathcal H(R),\;\; d(F,H)\leq \delta}\left|\langle Y^{(\eps)},\mathscr G(F)-\mathscr G(H)\rangle_\mathbb H \right|,\] a random variable.
		Moreover, we define the set
		\[A:=\left\{\omega\in\Omega\;\middle|\; M_\delta\xrightarrow{\delta\to 0}0 \right\},\] where $\Omega$ is a probability space supporting the law $\mathbb P$ of $\mathbb W$. It is sufficient to show that $\mathbb P(A)=1$, and noting that $M_\delta$ is decreasing in $\delta$, it hence suffices to prove $\mathbb E\left[M_\delta \right]\xrightarrow{\delta\to 0}0$. To see this, similarly to the proof of Lemma \ref{thm-ep}, we apply Dudley's theorem (see \cite{nicklgine}, Theorem 2.3.7) to the Gaussian process
		\[\left(\mathcal \W(\psi):\;\psi\in \mathcal D_R\right), \qquad \mathcal D_R:=\left\{\mathscr G(F)\;\middle|\; F\in \mathcal V\cap \mathcal H(R)\right\}. \]
		For any $\delta>0$, define
		\[R_\delta:=\sup_{F,H\in \mathcal V\cap \mathcal H(R),\;\; d(F,H)\leq \delta}\|\mathscr G(F)-\mathscr G(H)\|_{\mathbb H}.\]
		By Lemma \ref{lem-IIcont}, we know that $\mathscr G$ is continuous as a mapping from $(\mathcal V\cap \mathcal H(R), d)$ to $\mathbb H$. As $(\mathcal V\cap \mathcal H(R), d)$ is a compact metric space, $\mathscr G$ is in fact uniformly continuous, so we have that $R_\delta\xrightarrow{\delta\to 0} 0$. By the same argument as in the proof of Lemma \ref{lem-main} (but choosing here $m:=(1+R^\gamma)$) we can use (\ref{entrcond}) to obtain
		\[H(\rho, \mathcal D_R, \|\cdot\|_{\mathbb H})\lesssim \left(\frac{Rm}{\rho}\right)^{\frac{d}{(\alpha+\kappa)}}, ~~~\rho>0, \]
		whence by Dudley's theorem, the modulus of continuity is controlled by
		\begin{equation*}
		\begin{split}
		\mathbb E\left[M_\delta\right]&\leq \mathbb E\left[\sup_{\psi,\varphi\in \mathcal D_R,\; \|\psi-\varphi\|_{\mathbb H}\leq R_\delta}\left|\langle\W,\psi-\varphi\rangle_\mathbb H\right|\right] \lesssim \int_0^{R_\delta}\left(\frac{Rm}{\rho}\right)^{\frac{d}{2(\alpha+\kappa)} }d\rho,
		\end{split}
		\end{equation*}
		which converges to zero as $\delta \to 0$ since $\alpha>d/2-\kappa$.
	\end{proof}

\section*{Acknowledgments}
The authors are grateful to two anonymous referees and an associate editor for their remarks and suggestions. RN thanks Francois Monard and Gabriel P. Paternain for helpful discussions.

\bibliography{plsreferences}

\begin{thebibliography}{10}

\bibitem{A18}
{\sc K.~Abraham}, {\em Nonparametric {B}ayesian posterior contraction rates for
  scalar diffusions with high-frequency data}, Bernoulli, to appear,
  arXiv:1802.05635,  (2018).

\bibitem{BU10}
{\sc G.~Bal and G.~Uhlmann}, {\em Inverse diffusion theory of photoacoustics},
  Inverse Problems, 26 (2010), pp.~085010, 20,
  \url{https://doi.org/10.1088/0266-5611/26/8/085010},
  \url{http://dx.doi.org/10.1088/0266-5611/26/8/085010}.

\bibitem{bass}
{\sc R.~Bass}, {\em Diffusions and {Elliptic} {Operators}}, Springer, 1997.

\bibitem{BB18}
{\sc M.~Benning and M.~Burger}, {\em Modern regularization methods for inverse
  problems}, Acta Numer., 27 (2018), pp.~1--111,
  \url{https://doi.org/10.1017/s0962492918000016},
  \url{https://doi.org/10.1017/s0962492918000016}.

\bibitem{BHM04}
{\sc N.~Bissantz, T.~Hohage, and A.~Munk}, {\em Consistency and rates of
  convergence of nonlinear {T}ikhonov regularization with random noise},
  Inverse Problems, 20 (2004), pp.~1773--1789,
  \url{https://doi.org/10.1088/0266-5611/20/6/005},
  \url{https://doi.org/10.1088/0266-5611/20/6/005}.

\bibitem{BHMR07}
{\sc N.~Bissantz, T.~Hohage, A.~Munk, and F.~Ruymgaart}, {\em Convergence rates
  of general regularization methods for statistical inverse problems and
  applications}, SIAM J. Numer. Anal., 45 (2007), pp.~2610--2636,
  \url{https://doi.org/10.1137/060651884},
  \url{https://doi.org/10.1137/060651884}.

\bibitem{devore}
{\sc A.~Bonito, A.~Cohen, R.~DeVore, G.~Petrova, and G.~Welper}, {\em Diffusion
  {Coefficients} {Estimation} for {Elliptic} {Partial} {Differential}
  {Equations}}, SIAM Journal on Mathematical Analysis, 49 (2017),
  p.~1570–1592.

\bibitem{BL96}
{\sc L.~D. Brown and M.~G. Low}, {\em Asymptotic equivalence of nonparametric
  regression and white noise}, Ann. Statist., 24 (1996), pp.~2384--2398.

\bibitem{cn1}
{\sc I.~Castillo and R.~Nickl}, {\em Nonparametric {Bernstein}-von {Mises}
  {theorems} in {Gaussian} {White} {Noise}}, Annals of Statistics, 41 (2013),
  pp.~1999--2028.

\bibitem{chungzhao}
{\sc K.~Chung and Z.~Zhao}, {\em {From} {Brownian} {Motion} to
  {Schr\"odinger's} {Equation}}, Springer, 1995.

\bibitem{dashti13}
{\sc M.~Dashti, K.~Law, A.~M. Stuart, and J.~Voss}, {\em {MAP} estimators an
  their consistency in {Bayesian} nonparametric inverse problems}, Inverse
  Problems, 29 (2013).

\bibitem{DS16}
{\sc M.~Dashti and A.~M. Stuart}, {\em The {B}ayesian approach to inverse
  problems}, In: Handbook of Uncertainty Quantification, Eds R. Ghanem et al,
  Springer,  (2016).

\bibitem{EHN96}
{\sc H.~W. Engl, M.~Hanke, and A.~Neubauer}, {\em Regularization of inverse
  problems}, vol.~375 of Mathematics and its Applications, Kluwer, Dordrecht,
  1996.

\bibitem{EKN89}
{\sc H.~W. Engl, K.~Kunisch, and A.~Neubauer}, {\em Convergence rates for
  {T}ikhonov regularisation of nonlinear ill-posed problems}, Inverse Problems,
  5 (1989), pp.~523--540, \url{http://stacks.iop.org/0266-5611/5/523}.

\bibitem{evans}
{\sc L.~Evans}, {\em Partial {Differential} {Equations}}, American Mathematical
  Society, 1998.

\bibitem{gt}
{\sc D.~Gilbarg and N.~S. Trudinger}, {\em Elliptic {Partial} {Diffential}
  {Equations} of {Second} {Order}}, Springer, 1998.

\bibitem{nicklgine}
{\sc E.~Gin\'e and R.~Nickl}, {\em Mathematical {Foundations} of
  {Infinite}-{Dimensional} {statistical} {models}}, Cambridge University Press,
  2016.

\bibitem{HB15}
{\sc T.~Helin and M.~Burger}, {\em Maximum a posteriori probability estimates
  in infinite-dimensional {B}ayesian inverse problems}, Inverse Problems, 31
  (2015), pp.~085009, 22, \url{https://doi.org/10.1088/0266-5611/31/8/085009},
  \url{https://doi.org/10.1088/0266-5611/31/8/085009}.

\bibitem{HP08}
{\sc T.~Hohage and M.~Pricop}, {\em Nonlinear {T}ikhonov regularization in
  {H}ilbert scales for inverse boundary value problems with random noise},
  Inverse Probl. Imaging, 2 (2008), pp.~271--290,
  \url{https://doi.org/10.3934/ipi.2008.2.271},
  \url{https://doi.org/10.3934/ipi.2008.2.271}.

\bibitem{KNS08}
{\sc B.~Kaltenbacher, A.~Neubauer, and O.~Scherzer}, {\em Iterative
  regularization methods for nonlinear ill-posed problems}, de Gruyter, Berlin,
  2008, \url{https://doi.org/10.1515/9783110208276},
  \url{https://doi.org/10.1515/9783110208276}.

\bibitem{KvdVvZ11}
{\sc B.~Knapik, A.~W. van~der Vaart, and J.~H. van Zanten}, {\em Bayesian
  inverse problems with {G}aussian priors}, Ann. Statist., 39 (2011),
  pp.~2626--2657, \url{https://doi.org/10.1214/11-AOS920},
  \url{http://dx.doi.org/10.1214/11-AOS920}.

\bibitem{itokunisch}
{\sc K.~Kunisch and K.~Ito}, {\em On the {Injectivity} and {Linearization} of
  the {Coefficient}-to-{Solution} {Mapping} for {Elliptic} {Boundary} {Value}
  {Problems}}, Journal of Mathematical Analysis and Applications, 188 (1994),
  pp.~1040--1066.

\bibitem{lionsmagenes}
{\sc J.~Lions and E.~Magenes}, {\em Non-{Homogeneous} {Boundary} {Value}
  {Problems} and {Applications}, {Vol.} 1}, Springer, 1972.

\bibitem{LL10}
{\sc J.-M. Loubes and C.~Lude\~na}, {\em Penalized estimators for non linear
  inverse problems}, ESAIM Probab. Stat., 14 (2010), pp.~173--191,
  \url{https://doi.org/10.1051/ps:2008024},
  \url{https://doi.org/10.1051/ps:2008024}.

\bibitem{ML00}
{\sc W.~McLean}, {\em Strongly elliptic systems and boundary integral
  equations}, Cambridge University Press, Cambridge, 2000.

\bibitem{megg}
{\sc R.~E. Megginson}, {\em An {Introduction} to {Banach} {Space} {Theory}},
  vol.~183 of Graduate Texts in Mathematics, Springer, 1998.

\bibitem{MNP19b}
{\sc F.~Monard, R.~Nickl, and G.~P. Paternain}, {\em Consistent inversion of
  noisy non-abelian $x$-ray transforms}, arXiv preprint:1905.00860,  (2019).

\bibitem{MNP19}
{\sc F.~Monard, R.~Nickl, and G.~P. Paternain}, {\em Efficient {B}ayesian
  nonparametric inference for {X}-ray transforms}, Ann. Stat., 47 (2019),
  pp.~1113--1147.

\bibitem{N86}
{\sc F.~Natterer}, {\em The mathematics of computerized tomography}, B. G.
  Teubner, Stuttgart; John Wiley \& Sons, Ltd., Chichester, 1986.

\bibitem{N92}
{\sc A.~Neubauer}, {\em Tikhonov regularization of nonlinear ill-posed problems
  in {H}ilbert scales}, Appl. Anal., 46 (1992), pp.~59--72,
  \url{https://doi.org/10.1080/00036819208840111},
  \url{https://doi.org/10.1080/00036819208840111}.

\bibitem{n17}
{\sc R.~Nickl}, {\em {Bernstein}-von {Mises} theorems for statistical inverse
  problems {I}: {Schr{\"o}dinger} equation}, Journal of the European
  Mathematical Society (JEMS), to appear, arXiv:1707.01764,  (2018).

\bibitem{NS17}
{\sc R.~Nickl and J.~S\"ohl}, {\em Nonparametric {B}ayesian posterior
  contraction rates for discretely observed scalar diffusions}, Ann. Statist.,
  45 (2017), pp.~1664--1693.

\bibitem{nirenberg59}
{\sc L.~Nirenberg}, {\em On elliptic partial differential equations}, Ann. Sc.
  Norm. Sup. Pisa, 13 (1959), pp.~115--162.

\bibitem{R13}
{\sc K.~Ray}, {\em Bayesian inverse problems with non-conjugate priors},
  Electron. J. Stat., 7 (2013), pp.~2516--2549.

\bibitem{R08}
{\sc M.~Rei\ss}, {\em Asymptotic equivalence for nonparametric regression with
  multivariate and random design}, Ann. Statist., 36 (2008), pp.~1957--1982,
  \url{https://doi.org/10.1214/07-AOS525},
  \url{http://dx.doi.org/10.1214/07-AOS525}.

\bibitem{R81}
{\sc G.~R. Richter}, {\em An inverse problem for the steady state diffusion
  equation}, SIAM J. Appl. Math., 41 (1981), pp.~210--221,
  \url{https://doi.org/10.1137/0141016}, \url{https://doi.org/10.1137/0141016}.

\bibitem{SEK93}
{\sc O.~Scherzer, H.~W. Engl, and K.~Kunisch}, {\em Optimal a posteriori
  parameter choice for {T}ikhonov regularization for solving nonlinear
  ill-posed problems}, SIAM J. Numer. Anal., 30 (1993), pp.~1796--1838,
  \url{https://doi.org/10.1137/0730091}, \url{https://doi.org/10.1137/0730091}.

\bibitem{stuart10}
{\sc A.~Stuart}, {\em {Inverse} problems: {A} {Bayesian} perspective}, Acta
  Numerica,  (2010), pp.~451--559.

\bibitem{TJ02}
{\sc U.~Tautenhahn and Q.-n. Jin}, {\em Tikhonov regularization and a
  posteriori rules for solving nonlinear ill posed problems}, Inverse Problems,
  19 (2003), pp.~1--21, \url{https://doi.org/10.1088/0266-5611/19/1/301},
  \url{https://doi.org/10.1088/0266-5611/19/1/301}.

\bibitem{T78}
{\sc H.~Triebel}, {\em Interpolation theory, function spaces, differential
  operators}, vol.~18 of North-Holland Mathematical Library, North-Holland, New
  York, 1978.

\bibitem{triebel}
{\sc H.~Triebel}, {\em Theory of function spaces}, vol.~78 of Monographs in
  Mathematics, Birkh\"auser Verlag, Basel, 1983.

\bibitem{saramest}
{\sc S.~van~de Geer}, {\em Empirical {Processes} in {M}-{Estimation}},
  Cambridge U. Press, 2000.

\bibitem{sara2001}
{\sc S.~van~de Geer}, {\em Least squares estimation with complexity penalties},
  Mathematical Methods of statistics,  (2001), pp.~355--374.

\bibitem{v13}
{\sc S.~J. Vollmer}, {\em Posterior consistency for {B}ayesian inverse problems
  through stability and regression results}, Inverse Problems, 29 (2013),
  pp.~125011, 32, \url{https://doi.org/10.1088/0266-5611/29/12/125011},
  \url{https://doi.org/10.1088/0266-5611/29/12/125011}.

\bibitem{ziemer}
{\sc W.~Ziemer}, {\em Weakly {Differentiable} {Functions}}, Springer, 1990.

\end{thebibliography}
\bibliographystyle{siamplain}

\end{document}